\definecolor{refkey}{gray}{.75}
\definecolor{labelkey}{gray}{.5}
\tikzstyle{p}+=[fill=black, circle, minimum width = 1pt, inner sep =
\newtheorem{theorem}{Theorem}[section]
\newtheorem{proposition}[theorem]{Proposition}
\newtheorem{lemma}[theorem]{Lemma}
\newtheorem*{claim}{Claim}
\theoremstyle{definition}
\newtheorem{definition}[theorem]{Definition}
\newtheorem{example}[theorem]{Example}
\theoremstyle{remark}
\newcommand{\abs}[1]{\left\lvert#1\right\rvert}
\newcommand{\norm}[1]{\left\lVert#1\right\rVert}
\newcommand{\set}[1]{\left\{ #1 \right\}}
\newcommand{\ol}{\overline}
\newcommand{\x}{\times}
\newcommand{\e}{\epsilon}
\newcommand{\del}{\partial}
\newcommand{\EE}{\mathbb{E}}
\newcommand{\RR}{\mathbb{R}}
\newcommand{\ZZ}{\mathbb{Z}}
\newcommand{\tg}{\tilde{g}}
\newcommand{\og}{\bar{g}}
\newcommand{\be}{\mathbf{e}}
\newcommand{\bx}{\mathbf{x}}
\title{A relative Szemer\'edi theorem}
\author{David Conlon}
\address{Mathematical Institute\\
Oxford OX1 3LB\\
United Kingdom}
\email{david.conlon@maths.ox.ac.uk}
\author{Jacob Fox}
\address{Department of Mathematics\\
MIT\\
Cambridge\\
MA 02139-4307}
\email{fox@math.mit.edu}
\author{Yufei Zhao}
\address{Department of Mathematics\\
MIT\\
Cambridge\\
MA 02139-4307}
\email{yufeiz@math.mit.edu}
\thanks{The first author was supported by a Royal Society University
  Research Fellowship, the second author was supported by a Simons
  Fellowship, NSF grant DMS-1069197, by an Alfred P. Sloan
Fellowship, and by an MIT NEC Corporation Fund Award, and the third author was
  supported by a Microsoft Research PhD Fellowship}
\begin{document}

\maketitle

\begin{abstract}
The celebrated Green-Tao theorem states that there are arbitrarily long
arithmetic progressions in the primes. One of the main ingredients in their proof is a relative Szemer\'edi theorem which says that any subset of a pseudorandom set of integers of positive relative density contains long arithmetic progressions.

In this paper, we give a simple proof of a strengthening of the relative Szemer\'edi
theorem, showing that a much weaker pseudorandomness condition is sufficient. Our
strengthened version can be applied to give the first relative Szemer\'edi
theorem for $k$-term arithmetic progressions in pseudorandom subsets of
$\mathbb{Z}_N$ of density $N^{-c_k}$.

The key component in our proof is an extension of the regularity method to sparse
pseudorandom hypergraphs, which we believe to be interesting in its own right. From this we derive a relative extension of the hypergraph removal lemma. This is a strengthening of an
earlier theorem used by Tao in his proof that the Gaussian primes contain arbitrarily shaped constellations and, by standard arguments, allows us to deduce the relative Szemer\'edi theorem.
\end{abstract}

\section{Introduction} \label{sec:intro}

The Green-Tao theorem \cite{GT08} states that the primes contain arbitrarily long arithmetic progressions. This result, along with their subsequent work \cite{GT10} on determining the asymptotics for the number of prime $k$-tuples in arithmetic progression, constitutes one of the great breakthroughs in 21st century mathematics.

The proof of the Green-Tao theorem has two key steps. The first step,
which Green and Tao refer to as the ``main new ingredient" of their
proof, is to establish a relative Szemer\'edi theorem. Szemer\'edi's
theorem \cite{Sze75} states that any dense subset of the integers
contains arbitrarily long arithmetic progressions. More formally, we
have the following theorem, which is stated for $\ZZ_N := \ZZ / N \ZZ$
but easily implies an equivalent statement in the set $[N] :=\{1, 2, \dots, N\}$.

\begin{theorem}[Szemer\'edi's theorem] \label{thm:intro-sz} For every
  natural number $k \geq 3$ and every $\delta > 0$, as long as $N$ is
  sufficiently large, any subset of $\ZZ_N$ of density at least
  $\delta$ contains an arithmetic progression of length $k$.
\end{theorem}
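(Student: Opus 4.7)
The plan is to deduce Szemer\'edi's theorem from the hypergraph removal lemma, following the route of Ruzsa--Szemer\'edi and Solymosi for $k = 3$ and of Gowers, R\"odl--Skokan, Nagle--R\"odl--Schacht--Skokan, and Tao for general $k$. Given $A \subseteq \ZZ_N$ of density at least $\delta$, I encode the $k$-term APs of $A$ as the simplices (copies of $K_k^{(k-1)}$) of an auxiliary $(k-1)$-uniform $k$-partite hypergraph $H$ on vertex classes $V_1, \ldots, V_k$, each a copy of $\ZZ_N$.

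For the encoding, I take $k$ linear forms $L_1, \ldots, L_k$ in $\bv = (v_1, \ldots, v_k)$ such that $L_i$ is independent of $v_i$ and such that $L_1(\bv), \ldots, L_k(\bv)$ is always a $k$-term AP; an explicit choice is $L_i(\bv) = \sum_{j \neq i}(i - j)\, v_j$, for which the common difference is $v_1 + \cdots + v_k$. I then declare the $(k-1)$-edge $\{v_j : j \neq i\}$ to lie in $H$ whenever $L_i(\bv) \in A$, which is well-defined because $L_i$ does not depend on $v_i$. A simplex in $H$ then encodes a $k$-AP in $A$, and since the linear map $\bv \mapsto (L_1(\bv), \ldots, L_k(\bv))$ has rank $2$ (its image being the $2$-dimensional space of $k$-APs), each $k$-AP in $A$ is represented by exactly $N^{k-2}$ simplices.

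The trivial APs $(a, a, \ldots, a)$ with $a \in A$ thus account for $|A| \cdot N^{k-2} = \delta N^{k-1}$ simplices. These are pairwise edge-disjoint: edges coming from distinct trivial APs are labelled by distinct values of $L_i$ and are therefore disjoint, while within the fiber over a single $(a, \ldots, a)$ the coefficient of $v_i$ in $L_{i+1}$ equals $1$, so the kernel of $\bv \mapsto (L_1(\bv),\ldots,L_k(\bv))$ contains no nonzero vector supported on a single coordinate, and any two distinct simplices in the fiber differ in at least two coordinates. If $A$ contained no nontrivial $k$-AP, these would be all the simplices of $H$, giving only $O(N^{k-1}) = o(N^k)$ simplices in total. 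Applying the hypergraph removal lemma with $\e = \delta/2$ would then yield a set of only $\e N^{k-1}$ edges meeting every simplex, contradicting the lower bound of $\delta N^{k-1}$ forced by edge-disjointness once $N$ is large.

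The main obstacle is the hypergraph removal lemma itself: for $(k-1)$-uniform hypergraphs with $k \geq 4$ its proof requires the hypergraph regularity lemma together with a compatible counting lemma, and this is the substantial technical input. Once those are granted, the deduction above is essentially a counting exercise, so the real work all lies in the regularity--counting package --- which, in the sparse pseudorandom setting, is precisely what this paper sets out to develop.
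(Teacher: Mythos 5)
The paper does not itself prove Theorem~\ref{thm:intro-sz}; it cites Szemer\'edi's original argument~\cite{Sze75} and treats the theorem as background, needed only as a black box inside the dense hypergraph removal lemma (Theorem~\ref{thm:dense-hypergraph-removal}). That said, your reduction is correct and is exactly the encoding the paper deploys in Section~\ref{sec:relative-szemeredi}: your linear forms $L_i(\bv)=\sum_{j\neq i}(i-j)v_j$ are precisely the maps $\psi_j$ of~\eqref{eq:relsz-pf-psi} in the special case $Z=Z'=\ZZ_N$, $\phi_j(d)=(j-1)d$, your rank-two observation is the paper's surjectivity of $x\mapsto(a,d)$ onto $\ZZ_N^2$ (so each fiber has size $N^{k-2}$), and your edge-disjointness count of the trivial-AP simplices is the dense shadow of the cleanup done via the sets $A_j$ in the proof of Theorem~\ref{thm:rel-mul-sz-removal}. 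So your sketch reproduces the dense ($\nu=1$) instance of the paper's own relative machinery, which is the standard Ruzsa--Szemer\'edi/Frankl--R\"odl/Solymosi/Tao route; the only cosmetic caveat is the usual care needed in $\ZZ_N$ about ``nontrivial AP'' meaning $d\neq 0$ versus distinct elements for composite $N$, which is routinely handled by passing to a larger prime modulus.
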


A relative Szemer\'edi theorem is a similar statement where the ground set is no longer the set $\ZZ_N$ but rather a sparse pseudorandom subset of $\ZZ_N$.

The second step in their proof is to show that the primes are a dense subset of a pseudorandom set of ``almost primes", sufficiently pseudorandom that the relative Szemer\'edi theorem holds. Then, since the primes are a dense subset of this pseudorandom set, an application of the relative Szemer\'edi theorem implies that the primes contain arbitrarily long arithmetic progressions. This part of the proof use some ideas from the work of Goldston and Y{\i}ld{\i}r{\i}m~\cite{GY03} (and was subsequently simplified in~\cite{Taonote}).

In the work of Green and Tao, the pseudorandomness conditions on the
ground set are known as the linear forms condition and the correlation
condition. Roughly speaking, both of these conditions say that, in
terms of the number of solutions
to certain linear systems of equations, the set behaves like a random
set of the same density. A natural question is whether these pseudorandomness conditions can be weakened. We address this question by giving a simple proof for a strengthening of the relative Szemer\'edi theorem, showing that a weak linear forms condition is sufficient for the theorem to hold.

This improvement has two aspects. We remove the correlation condition entirely but we also reduce the set of linear forms for which the correct count is needed. In particular, we remove those corresponding to the dual function condition, a pointwise boundedness condition stated explicitly by Tao \cite{Tao06jam} in his work on constellations in the Gaussian primes but also used implicitly in \cite{GT08}.

To state the main theorem, we will assume the definition of the $k$-linear forms condition. The formal definition, which may be found in Section~\ref{sec:definitions-results} below, is stated for measures rather than sets but we will ignore this relatively minor distinction here, reserving a more complete discussion of our terminology for there.

\begin{theorem} [Relative Szemer\'edi
  theorem] \label{thm:RelSzemIntro} For every natural number $k \geq
  3$ and every $\delta > 0$, if $S \subset \ZZ_N$ satisfies the
  $k$-linear forms condition and $N$ is sufficiently large, then any
  subset of $S$ of relative density at least $\delta$ contains an arithmetic
  progression of length $k$.
\end{theorem}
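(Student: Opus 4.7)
The plan is to derive Theorem~\ref{thm:RelSzemIntro} from the relative hypergraph removal lemma announced in the abstract, via the standard encoding of arithmetic progressions as simplices in a weighted $k$-partite $(k-1)$-uniform hypergraph. Given $A \subset S \subset \ZZ_N$ of relative density at least $\delta$, I would set up $k$ vertex classes $V_1,\dots,V_k$, each a copy of $\ZZ_N$, and the standard parameterization that takes $k$ auxiliary variables $x_1,\dots,x_k$ and produces a $k$-AP whose $j$th term is a prescribed linear combination of the $x_i$ with $i\ne j$. Each $(k-1)$-element transversal $\{x_i : i \ne j\}$ then carries a natural label (a single term of the AP), and we place it in the hypergraph $H$ precisely when that label lies in $A$. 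Simplices of $H$ are in bijection with the $(x_1,\dots,x_k)\in\ZZ_N^k$ whose induced $k$-AP is entirely contained in $A$, and so they count the $k$-APs in $A$ (with the correct multiplicity).

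The second step is to build the ambient weighted hypergraph $G$ by replacing each indicator $\one_A$ with the measure associated to $S$, and to verify that the $k$-linear forms condition stated in Section~\ref{sec:definitions-results} is exactly the pseudorandomness hypothesis needed for the relative hypergraph removal lemma to apply to $G$. This is where the choice of parameterization matters: the linear forms that appear in the moment computations measuring the discrepancy of $G$ from the uniform hypergraph are precisely those that define the $k$-linear forms condition, so once the encoding is fixed there is a direct dictionary between the two pseudorandomness notions.

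With $G$ verified pseudorandom and $H\subset G$ carrying at least a $\delta$ fraction of the weight of $G$ on each edge slice (by the relative density hypothesis on $A$), I would argue by contradiction. Suppose $A$ contains no non-trivial $k$-AP. Then the only simplices of $H$ are those arising from the trivial APs with common difference $0$, giving a total of $O(N^{k-1})$ simplices, which is $o(N^k)$. The relative hypergraph removal lemma then asserts that one can delete an $o(1)$-fraction of the edges of $H$ to destroy all of its simplices. But the $k$-linear forms condition makes the total edge weight of $G$ close to its expected value, and hence the edge weight of $H$ is at least $(\delta - o(1))$ times this. A counting argument, quantifying how many simplices survive after deleting a small fraction of edges from a hypergraph with substantial density, then forces $H$ to contain $\Omega_\delta(N^k)$ simplices, contradicting the trivial upper bound $O(N^{k-1})$ for $N$ large.

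The main obstacle is not the deduction itself, which follows the pattern set out by Ruzsa--Szemer\'edi and extended to $k$-APs via hypergraph removal, but rather pinning down the precise pseudorandomness hypothesis so that the encoding lines up cleanly. The innovation being advertised is that the minimal $k$-linear forms condition suffices; concretely, one must ensure that the parameterization above uses only the forms that the authors' condition controls, without invoking the extra correlation or dual-function conditions used in \cite{GT08,Tao06jam}. Once the encoding is chosen to respect this economy, the remainder of the argument is the standard removal-lemma-implies-Szemer\'edi deduction, now carried out inside a sparse pseudorandom environment.
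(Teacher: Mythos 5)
Your high-level plan---encode $k$-APs as simplices in a $(k-1)$-uniform $k$-partite hypergraph, majorize by the weighted hypergraph coming from the measure $\nu = (N/|S|)1_S$, match the $k$-linear forms condition for $\nu$ with the $H$-linear forms condition required for the removal lemma, and then apply the relative hypergraph removal lemma (Theorem~\ref{thm:rel-removal})---is the route the paper takes, routed through Theorems~\ref{thm:rel-sz}, \ref{thm:rel-mul-sz}, and \ref{thm:rel-mul-sz-removal}.

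The gap is in the final contradiction step. Comparing raw simplex counts---``$\Omega_\delta(N^k)$ simplices'' versus ``$O(N^{k-1})$''---is the dense removal-lemma deduction, and it does not transfer unchanged to the sparse weighted hypergraph. In the sparse encoding of $A \subseteq S$, the parameterization $(x_1,\dots,x_k) \mapsto (a,d)$ is $N^{k-2}$-to-one, so the number of simplices coming from trivial $d=0$ progressions is $|A|\,N^{k-2}$ with $|A| = o(N)$; there is no honest count that makes your comparison bite, and the relative removal lemma's conclusion is anyway phrased in terms of weighted densities, not simplex counts. The argument that actually closes the loop works at the level of the \emph{removed sets}. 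Applying Theorem~\ref{thm:rel-removal} (equivalently, the arithmetic form Theorem~\ref{thm:rel-mul-sz-removal}) produces sets $A_j \subseteq \ZZ_N$ whose product indicator vanishes on every parameterized tuple, \emph{including those with common difference $d=0$}. For $d=0$ all $k$ labels coincide, so the $A_j$ cannot have a common element; hence the complements $\ZZ_N \setminus A_j$ cover $\ZZ_N$. Summing the density of $f$ over this cover gives $\EE[f] \le \sum_j \EE[f\, 1_{\ZZ_N\setminus A_j}] \le k\e$, which is $< \delta$ once one fixes $\e < \delta/k$ in advance, contradicting $\EE[f]\ge\delta$. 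This ``cover by complements'' argument, not a simplex-count comparison, is what drives the deduction, and your final ``counting argument'' needs to be replaced by it.
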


One of the immediate advantages of this theorem is that it simplifies the proof of the Green-Tao theorem. In addition to giving a simple proof of the relative Szemer\'edi theorem, it removes the need for the number-theoretic estimates involved in establishing the correlation condition for the almost primes. A further advantage is that, by removing the correlation condition, the relative Szemer\'edi theorem now applies to pseudorandom subsets of $\mathbb{Z}_N$ of density $N^{-c_k}$. With the correlation condition, one could only hope for such a theorem down to densities of the form $N^{-o(1)}$.


While the relative Szemer\'edi theorem is the main result of this paper, the main advance is an approach to regularity in sparse pseudorandom hypergraphs. This allows us to prove analogues of several well-known combinatorial theorems relative to sparse pseudorandom hypergraphs. In particular, we prove a sparse analogue of the hypergraph removal lemma. It is from this that we derive our relative Szemer\'edi theorem. As always, applying the regularity method has two steps, a regularity lemma and a counting lemma. We provide novel approaches to both.

A counting lemma for subgraphs of sparse pseudorandom graphs was already proved by the authors in \cite{CFZ14}. In this paper, we simplify and streamline the approach taken there in order to prove a counting lemma for subgraphs of sparse pseudorandom hypergraphs. This result is the key technical step in our proof and, perhaps, the main contribution of this paper. Apart from the obvious difficulties in passing from graphs to hypergraphs, the crucial difference between this paper and \cite{CFZ14} is in the type of pseudorandomness considered. For graphs, we have a long-established notion of pseudorandomness known as jumbledness. The greater part of \cite{CFZ14} is then concerned with optimizing the jumbledness condition which is necessary for counting a particular graph $H$. For hypergraphs, we use an analogue of the linear forms condition first considered by Tao \cite{Tao06jam}. It says that our hypergraph is pseudorandom enough for counting $H$ within subgraphs if it contains asymptotically the correct count for the $2$-blow-up of $H$ and all its subgraphs.

We also use an alternative approach to regularity in sparse
hypergraphs. While it would be natural to use a sparse hypergraph
regularity lemma (and, following our approach in \cite{CFZ14}, this was
how we initially proceeded), it suffices to use a weak sparse
hypergraph regularity lemma which is an extension of the weak
regularity lemma of Frieze and Kannan~\cite{FK99}. This is also closely
related to the transference theorem used by Green and Tao (see, for
example,~\cite{Gow10} or \cite{RTTV08, TTV09}, where it is also referred to as the dense model theorem).

With both a regularity lemma and a counting lemma in place, it is then
a straightforward matter to prove a relative extension of the famous
hypergraph removal lemma \cite{Gow07, NRS06, RS04, RS06, Tao06jcta}. Such a
theorem was first derived by Tao \cite{Tao06jam} in his work on
constellations in the Gaussian primes but, like the Green-Tao relative
Szemer\'edi theorem, needs both a correlation condition and a dual
function condition.\footnote{The problem of relative hypergraph
  removal was also recently considered by Towsner~\cite{Tow}.} Our approach removes these conditions. The final step in the proof of the relative Szemer\'edi theorem is then a standard reduction used to derive Szemer\'edi's theorem from the hypergraph removal lemma. The details of this reduction already appear in \cite{Tao06jam} but we include them here for completeness. In fact, the paper is self-contained apart from assuming the hypergraph removal lemma.

In Section~\ref{sec:definitions-results}, we state our results including the relative
Szemer\'edi theorem and the removal, regularity, and counting lemmas. In
Section~\ref{sec:relative-szemeredi}, we deduce the relative
multidimensional Szemer\'edi theorem from our relative hypergraph
removal lemma. In Section~\ref{sec:removal-lemma}, we prove the removal
lemma from the regularity and counting lemmas. We will prove our weak sparse hypergraph regularity lemma in Section~\ref{sec:weak-regularity} and the associated counting lemma in Section~\ref{sec:counting-lemma}. We conclude, in Section~\ref{sec:concluding}, with some remarks.

\section{Definitions and results} \label{sec:definitions-results}

\subsection*{Notation} \emph{Dependence on $N$.} We consider functions
$\nu = \nu^{(N)}$, where $N$ (usually suppressed) is assumed to be
some large integer. We write $o(1)$ for a quantity that tends to zero
as $N \to \infty$.

\noindent
\emph{Expectation.}  We write $\EE[f(x_1, x_2, \dots) \vert x_1 \in
A_1, x_2 \in A_2,\dots]$ for the expectation of $f(x_1, x_2, \dots)$ when
each $x_i$ is chosen uniformly and independently at random from $A_i$.

\subsection{A relative Szemer\'edi theorem}

Here is an equivalent weighted version of Szemer\'edi's theorem as formulated, for example, in \cite[Prop.~2.3]{GT08}.

\begin{theorem}
  [Szemer\'edi's theorem, weighted version]
  \label{thm:sz-weighted}
  For every $k \geq 3$ and $\delta > 0$, there exists $c > 0$ such that
  for $N$ sufficiently large and any nonnegative function $f \colon \ZZ_N \to [0,1]$ satisfying
  $\EE[f] \geq \delta$,
  \begin{equation} \label{eq:sz-weighted}
  \EE[ f(x)f(x+d)f(x + 2d) \cdots f(x+(k-1)d) \vert x,d \in \ZZ_N] \geq c.
\end{equation}
\end{theorem}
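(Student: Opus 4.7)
The plan is to deduce this weighted statement from the unweighted Szemer\'edi theorem (Theorem~\ref{thm:intro-sz}) via a level-set truncation combined with a Varnavides-type averaging, so that the assumed Szemer\'edi theorem delivers a \emph{positive density} of $k$-term progressions rather than just one.

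First, I reduce to the set case by truncating. Given $f \colon \ZZ_N \to [0,1]$ with $\EE[f] \geq \delta$, set
$$A := \set{x \in \ZZ_N : f(x) \geq \delta/2}.$$
Since $f \leq 1$ on $A$ and $f < \delta/2$ on $\ZZ_N \setminus A$, one gets
$$\delta \leq \EE[f] \leq \frac{\sabs{A}}{N} + \frac{\delta}{2},$$
so $\sabs{A} \geq (\delta/2)N$.

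Next I invoke Varnavides's averaging argument, which upgrades Szemer\'edi's theorem from ``at least one $k$-AP'' to ``a positive density of $k$-APs.'' Concretely, let $M = M(k, \delta/2)$ be large enough that Theorem~\ref{thm:intro-sz} applies to subsets of $\ZZ_M$ of density $\delta/2$, and consider the family of $M$-term progressions $a + b\sqb{M}$ in $\ZZ_N$ indexed by $(a,b) \in \ZZ_N \times \ZZ_N$. Linearity of expectation gives $\EE_{a,b}\sqb{\sabs{A \cap (a+b\sqb{M})}} \geq \delta M$, so a positive fraction of these progressions meet $A$ in density at least $\delta/2$; applying Theorem~\ref{thm:intro-sz} inside each such progression and summing (accounting for the bounded multiplicity with which each $k$-AP of $\ZZ_N$ is counted) yields a constant $c'(k,\delta) > 0$ and at least $c'(k,\delta) N^2$ pairs $(x,d) \in \ZZ_N \times \ZZ_N$ for which $x, x+d, \dots, x + (k-1)d$ all lie in $A$.

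Finally, for each such pair every factor $f(x+id)$ is at least $\delta/2$, so
$$\EE\sqb{f(x)f(x+d) \cdots f(x+(k-1)d) \,\middle\vert\, x,d \in \ZZ_N} \geq (\delta/2)^k \cdot c'(k,\delta) =: c > 0,$$
as required. The only genuine content is the Varnavides averaging; the truncation and the final aggregation are routine, and the only mild obstacle is the bookkeeping to ensure that each $k$-AP of $\ZZ_N$ is counted with bounded multiplicity across the $M$-progressions, which is handled by the standard counting that each fixed $k$-AP is contained in $\Theta(M^2)$ of the $N^2$ progressions $a + b\sqb{M}$.
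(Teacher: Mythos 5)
The paper does not supply its own proof of Theorem~\ref{thm:sz-weighted}; it simply records the statement as the equivalent weighted form of Szemer\'edi's theorem and cites Green--Tao for it, so there is no in-paper argument to compare against. That said, your proof is correct and is the standard route: a level-set truncation to replace $f$ by the set $A = \{f \geq \delta/2\}$, Varnavides averaging over the family of $M$-term progressions $a + b[M]$ to upgrade the single $k$-AP guaranteed by Theorem~\ref{thm:intro-sz} to $\Theta(N^2)$ of them, and then the pointwise lower bound $f \geq \delta/2$ on $A$ to bound the integrand by $(\delta/2)^k$ on each good pair $(x,d)$. Two cosmetic slips worth fixing: from $|A| \geq (\delta/2)N$ the linearity-of-expectation step gives $\EE_{a,b}\bigl[|A \cap (a + b[M])|\bigr] \geq (\delta/2) M$, not $\delta M$, and the subsequent Markov/averaging step then only produces a positive fraction of progressions meeting $A$ in density $\geq \delta/4$ (say), so $M$ should be chosen as $M(k,\delta/4)$; neither change affects the structure. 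One should also restrict to $b \neq 0$ (so each progression has $M$ distinct elements; the $b=0$ terms are negligible) and, as you note, use that each nontrivial $k$-AP in $\ZZ_N$ is contained in $O(M^2)$ of the roughly $N^2$ progressions, so the double count gives the desired $c'(k,\delta) N^2$ lower bound.
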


A relative Szemer\'edi theorem would instead ask for the nonnegative function $f$
to be bounded above by a measure $\nu$ instead of the constant function $f$.
For us, a measure will be any nonnegative function on $\ZZ_N$. We do not
explicitly assume the additional condition that
\[\EE[\nu(x) \vert x \in \ZZ_N] = 1 + o(1),\]
but this property follows from the linear forms condition that we will now assume.
Such measures are more
general than subsets, as any subset $S \subseteq
\ZZ_N$ (e.g.,~ in Theorem~\ref{thm:RelSzemIntro}) can be thought of as a
measure on $\ZZ_N$ taking value $N/\abs{S}$ on $S$ and 0
elsewhere. The dense case, as in Theorem~\ref{thm:sz-weighted}, corresponds to taking $\nu = 1$.
Our notion of pseudorandomness for measures $\nu$ on $\ZZ_N$ is now as follows.

\begin{definition}[Linear forms condition]
  \label{def:Z-lfc}
  A nonnegative function $\nu = \nu^{(N)} : \mathbb{Z}_N
  \rightarrow \mathbb{R}_{\geq 0}$ is said to obey the
  \emph{$k$-linear forms condition} if one has
  \begin{equation}
    \label{eq:Z-lfc}
    \EE \Bigl[
    \prod_{j=1}^k \prod_{\omega \in \{0,1\}^{[k]\setminus\{j\}}} \nu
    \Bigl( \sum_{i=1}^k (i-j) x_i^{(\omega_i)} \Bigr)^{n_{j,\omega}}
    \Big\vert
    x_1^{(0)}, x_1^{(1)}, \dots, x_k^{(0)},x_k^{(1)} \in \ZZ_N
    \Bigr]  = 1 + o(1)
  \end{equation}
  for any choices of exponents $n_{j,\omega} \in \{0,1\}$.
\end{definition}

\begin{example}
  \label{ex:Z-lfc}
  For $k = 3$, condition~\eqref{eq:Z-lfc} says that
  \begin{multline*}
    \EE[\nu(y+2z) \nu(y'+2z) \nu(y+2z') \nu(y'+2z')
    \nu(-x+z) \nu(-x'+z) \nu(-x+z') \nu(-x'+z') \\
    \cdot \nu(-2x-y) \nu(-2x'-y) \nu(-2x-y') \nu(-2x'-y')
    \vert x,x',y,y',z,z' \in \ZZ_N] = 1 + o(1)
  \end{multline*}
  and similar conditions hold if one or more of the twelve $\nu$ factors in the
  expectation are erased.
\end{example}

Our linear forms condition is much weaker that that used in
Green and Tao~\cite{GT08}. In particular, Green and Tao need
to assume that pointwise estimates such as
\[
\EE[\nu(a + x)\nu(a + y) \nu(a+x+y) \vert x,y \in \ZZ_N] = 1 + o(1)
\]
hold uniformly over all $a \in \ZZ_N$. Such linear forms do not arise in our proof. Moreover, to
prove their relative Szemer\'edi theorem, Green and Tao
need to assume a further pseudorandomness
condition, which they call the correlation condition. This condition
also does not arise in our proofs.
Indeed, we prove that a relative Szemer\'edi
theorem holds given only the linear forms condition defined above.

\begin{theorem}
  [Relative Szemer\'edi theorem]
  \label{thm:rel-sz}
  For every $k \geq 3$ and $\delta > 0$, there exists $c > 0$ such that if $\nu \colon \ZZ_N \to \RR_{\geq 0}$ satisfies the $k$-linear
  forms condition, $N$ is sufficiently large, and $f \colon \ZZ_N \to \RR_{\geq 0}$ satisfies $0
  \leq f(x) \leq \nu(x)$ for all $x \in \ZZ_N$ and $\EE[f] \geq
  \delta$, then
  \begin{equation} \label{eq:rel-sz}
  \EE[ f(x)f(x+d)f(x + 2d) \cdots f(x+(k-1)d) \vert x,d \in \ZZ_N] \geq c.
\end{equation}
\end{theorem}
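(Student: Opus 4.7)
The plan is to deduce Theorem~\ref{thm:rel-sz} from the relative hypergraph removal lemma (to be proved later in the paper) by the standard encoding of arithmetic progressions as simplices in a $(k-1)$-uniform hypergraph, following the template of Tao~\cite{Tao06jam} and the classical Ruzsa--Szemer\'edi / Solymosi argument, now carried out in the sparse weighted setting. Take vertex classes $V_1 = \cdots = V_k = \ZZ_N$, and for each $j \in [k]$ define the linear form
\[ y_j(v_1,\ldots,v_k) \;:=\; \sum_{i=1}^k (i-j)\,v_i.\]
Since the coefficient of $v_j$ vanishes, $y_j$ depends only on $(v_i)_{i \neq j}$ and hence is a function on the hyperedge set $E_j := \prod_{i \neq j} V_i$. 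A direct computation gives $y_{j+1}(v) - y_j(v) = -\sum_i v_i$ for every $j$, so $(y_1(v),\ldots,y_k(v))$ is always a $k$-AP with common difference $d = -\sum_i v_i$ and first term $a = y_1(v)$; the map $v \mapsto (a,d)\colon \ZZ_N^k \to \ZZ_N^2$ is surjective with every fibre of size $N^{k-2}$. Placing weight $f(y_j(e))$ on each hyperedge $e \in E_j$, I obtain
\[ \sum_{v \in V_1 \times \cdots \times V_k} \prod_{j=1}^k f(y_j(v)) \;=\; N^{k-2}\!\sum_{a,d \in \ZZ_N} f(a)f(a+d)\cdots f(a+(k-1)d),\]
so the AP count in~\eqref{eq:rel-sz} is, up to the factor $N^{2-k}$, a lower bound on the weighted simplex count of this hypergraph.

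The $k$-linear forms condition in Definition~\ref{def:Z-lfc} is, by design, exactly the pseudorandomness hypothesis required by the relative counting/removal lemma when applied to this hypergraph: the forms $\sum_{i=1}^k(i-j)x_i^{(\omega_i)}$ appearing in~\eqref{eq:Z-lfc} are the $2$-blow-up of the system $\{y_j\}_{j=1}^k$ (since $(i-j)=0$ at $i=j$, the expression truly depends only on $\omega \in \{0,1\}^{[k]\setminus\{j\}}$, matching Definition~\ref{def:Z-lfc} exactly). Dominating $f(y_j(\cdot))$ by $\nu(y_j(\cdot))$ on each hyperedge, the weighted $(k-1)$-uniform hypergraph I have constructed therefore meets the hypotheses of the relative hypergraph removal lemma.

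Now suppose for contradiction that the left-hand side of~\eqref{eq:rel-sz} is $o(1)$. The weighted simplex count is then $o(N^k)$, and the relative hypergraph removal lemma produces a function $f' \colon \ZZ_N \to \RR_{\geq 0}$ with $0 \leq f' \leq f$, $\EE[f - f'] = o(1)$, and vanishing weighted simplex count. For $N$ sufficiently large we have $\EE[f'] \geq \delta/2$, so the degenerate simplices alone (those with $d=0$, i.e.\ $\sum_i v_i = 0$, for which $y_1 = \cdots = y_k$) contribute
\[ N^{k-2}\!\sum_a f'(a)^k \;\geq\; N^{k-1}(\EE[f'])^k \;\geq\; N^{k-1}(\delta/2)^k \;>\; 0 \]
to the simplex count of $f'$ by Jensen's inequality, contradicting the fact that this count vanishes. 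This yields the desired $c = c(k,\delta) > 0$.

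The substantive difficulty lies not in the reduction above -- which is essentially formal once Definition~\ref{def:Z-lfc} is in place -- but upstream, in formulating and proving the relative hypergraph removal lemma so that it accepts exactly this weak linear forms condition, with no correlation condition and no pointwise dual function bound. The shape of Definition~\ref{def:Z-lfc} is dictated precisely by the $2$-blow-up appearing in the counting lemma, and the heart of the paper is in verifying that this minimal $2$-blow-up information is already enough to run the counting and regularity arguments for sparse pseudorandom hypergraphs.
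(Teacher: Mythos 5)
Your reduction follows the same route as the paper (encode $k$-APs as simplices in a $(k-1)$-uniform hypergraph on vertex classes $V_1 = \cdots = V_k = \ZZ_N$ with edge weight $f(\psi_j(\cdot))$ on $e_j = [k]\setminus\{j\}$, verify that Definition~\ref{def:Z-lfc} is exactly the $H$-linear forms condition for this system, apply the relative hypergraph removal lemma), except that the paper passes through the more general multidimensional statements Theorem~\ref{thm:rel-mul-sz} and Theorem~\ref{thm:rel-mul-sz-removal} and specializes at the end, whereas you work directly in one dimension --- a harmless simplification.

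There is, however, a gap in the middle of your argument. You write that ``the relative hypergraph removal lemma produces a function $f'\colon \ZZ_N \to \RR_{\geq 0}$ with $0 \le f' \le f$, $\EE[f - f'] = o(1)$, and vanishing weighted simplex count.'' That is not what Theorem~\ref{thm:rel-removal} gives: it produces, for each edge $e_j$, a set $E'_{j} \subseteq V_{e_j} \cong \ZZ_N^{k-1}$ with low-complexity complement, such that $\prod_j 1_{E'_j}(x_{e_j}) \equiv 0$ and $\EE[g_{e_j} 1_{V_{e_j}\setminus E'_j}]$ is small. Converting these $k$ hypergraph-side sets into a single arithmetic-side $f'$ on $\ZZ_N$ is precisely where the bulk of the paper's proof of Theorem~\ref{thm:rel-mul-sz-removal} lives: one defines $A_j \subseteq \ZZ_N$ to be those $z$ for which more than an $\frac{r}{r+1}$-fraction of the fiber $\psi_j^{-1}(z)$ lies inside $E'_j$, then uses the Markov-type inequality $\EE[f 1_{\ZZ_N\setminus A_j}] \le (r+1)\EE[g_{e_j}1_{V_{e_j}\setminus E'_j}]$ together with a pigeonhole argument over each fiber $V_J^{a,d}$ to show that $\prod_j 1_{A_j}(a + (j-1)d) = 0$ for all $a,d$. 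Only after this can one form something like $f' = f\cdot 1_{\bigcap_j A_j}$ (or, as the paper does, argue directly that $\{\ZZ_N\setminus A_j\}$ covers $\ZZ_N$ and so $\EE[f] \le \sum_j \EE[f 1_{\ZZ_N\setminus A_j}]$, dispensing with $f'$ and Jensen entirely). Your final contradiction via the degenerate simplices is fine once such an $f'$ is in hand, but calling this step ``essentially formal'' understates it: the fiber-and-pigeonhole translation is the nontrivial content of the standard Ruzsa--Szemer\'edi/Solymosi reduction in the weighted setting, and it is not simply handed to you by the removal lemma.
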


We note that both here and in Theorem \ref{thm:RelSzemIntro}, the phrase ``$N$ is sufficiently large" indicates not only a dependency on $\delta$ and $k$ as in the usual version of Szemer\'edi's theorem but also a dependency on the $o(1)$ term in the linear forms condition. We will make a similar assumption in many of the theorems stated below.

We prove Theorem~\ref{thm:rel-sz} using a new relative hypergraph
removal lemma.\footnote{Green and Tao~\cite{GT08} prove a
transference result that allows them to apply the dense version of
Szemer\'edi's theorem as a black box. This allows them to show that the optimal $c$ in
\eqref{eq:rel-sz} can be taken to be the same as the optimal $c$ in
\eqref{eq:sz-weighted}. The proof in this paper goes through the hypergraph
removal lemma and thus does not obtain the same $c$. Nevertheless,
one can obtain our result with the same $c$ by modifying
the argument to an arithmetic setting, as done by the third author
in a follow-up paper~\cite{Zhao14}.} In the next subsection we set up the notation for
hypergraphs and state the corresponding pseudorandomness
hypothesis.

\subsection{Hypergraphs}

We borrow most of our notation and definitions from Tao~\cite{Tao06jam, Tao06jcta}.

\begin{definition}
  [Hypergraphs] \label{def:hypergraphs}
  Let $J$ be a finite set and $r > 0$. Define $\binom{J}{r} = \set{e
    \subseteq J : \abs{e} = r}$ to be the set of all $r$-element subsets
  of $J$. An \emph{$r$-uniform hypergraph} on $J$ is defined to be any
  subset $H \subseteq \binom{J}{r}$.
\end{definition}

\begin{definition}
  [Hypergraph system] \label{def:hypergraph-system}
  A \emph{hypergraph system} is a quadruple $V =
  (J, (V_j)_{j \in J}, r, H)$, where $J$ is a finite set, $(V_j)_{j
    \in J}$ is a collection of finite non-empty sets indexed by $J$,
  $r \geq 1$ is a positive integer, and $H \subseteq \binom{J}{r}$ is an
  $r$-uniform hypergraph. For any $e \subseteq J$, we set $V_e :=
  \prod_{j \in e} V_j$. For any $x = (x_j)_{j \in J} \in V_J$ and any subset $J'
  \subseteq J$, we write $x_{J'} = (x_j)_{j \in J'} \in V_{J'}$ to mean the natural
  projection of $x$ onto the coordinates $J'$. Finally, for any $e \subseteq J$, we
  write $\del e$ for the set $\{f \subsetneq e : |f| = |e| - 1\}$, the
  skeleton of $e$.
\end{definition}

\begin{definition}
  [Weighted hypergraphs] \label{def:weighted-hypergraphs}
  Let $V = (J, (V_j)_{j \in J}, r, H)$ be a hypergraph system. A
  \emph{weighted hypergraph} on $V$ is a collection $g = (g_e)_{e \in
    H}$ of functions $g_e \colon V_e \to \RR_{\geq 0}$ indexed by $H$. We write 0 and 1 to
  denote the constant-valued weighted hypergraphs of uniform weight
  $0$ and $1$, respectively.  Given two
  weighted hypergraphs $g$ and $\nu$ on the same hypergraph system, we
  write $g \leq
  \nu$ to mean that $g_e \leq \nu_e$ for all $e$, which in turn means
  that $g_e(x_e) \leq \nu_e(x_e)$ for all $x_e \in V_e$.
\end{definition}

The weighted hypergraph $\nu$ plays an analogous role to the $\nu$ in
Theorem~\ref{thm:rel-sz}, with $\nu = 1$ again corresponding to the
dense case. We have an analogous linear forms condition for $\nu$ as a
weighted hypergraph. We use the following
indexing notation. For a finite set $e$ and $\omega \in \{0,1\}^e$, we
write $x^{(\omega)}_e$ to mean the tuple $(x^{(\omega_j)}_j)_{j \in
  e}$. We also write $x^{(0)}_e := (x^{(0)}_j)_{j \in e}$ and
similarly with $x^{(1)}_e$.

\begin{definition}
  [Linear forms condition] \label{def:H-lfc}
  A weighted hypergraph
$\nu = \nu^{(N)}$ on the hypergraph system $V = V^{(N)} = (J,
(V_j^{(N)})_{j \in J}, r, H)$ is said to obey the \emph{$H$-linear forms
  condition} (or simply the \emph{linear forms condition} if there is no confusion) if one has
\begin{equation}
  \label{eq:H-lfc}
      \EE\Bigl[\prod_{e \in H} \prod_{\omega \in \{0,1\}^e}
    \nu_e(x_e^{(\omega)})^{n_{e,\omega}}
    \Big\vert
    x_J^{(0)}, x_J^{(1)} \in
    V_J
    \Bigr]
    = 1 + o(1)
\end{equation}
for any choices of exponents $n_{e,\omega} \in \{0,1\}$.
\end{definition}

\begin{example}
  \label{ex:linear-forms}
  Let $H$ be the set of all pairs in $J = \{1,2,3\}$.
  The linear forms condition says that
\[
\EE\Bigl[
  \prod_{ij=12,13,23}
  \nu_{ij}(x_i,x_j) \nu_{ij}(x'_i,x_j) \nu_{ij}(x_i,x'_j)
  \nu_{ij}(x'_i,x'_j)
  \Big\vert x_1,x'_1 \in V_1,\
  x_2,x'_2\in V_2,\ x_3,x'_3 \in V_3
  \Bigr]= 1 + o(1)
\]
and similarly if one or more of the twelve $\nu$ factors are
deleted. This expression represents the weighted homomorphism density of
$K_{2,2,2}$ in the weighted tripartite graph given by $\nu$, as illustrated in
  Figure~\ref{fig:K3}(b) (the vertices of $K_{2,2,2}$ must map into
  the corresponding parts). Deleting some $\nu$ factors corresponds to
  considering various subgraphs of $K_{2,2,2}$, e.g., Figure~\ref{fig:K3}(c).
\end{example}

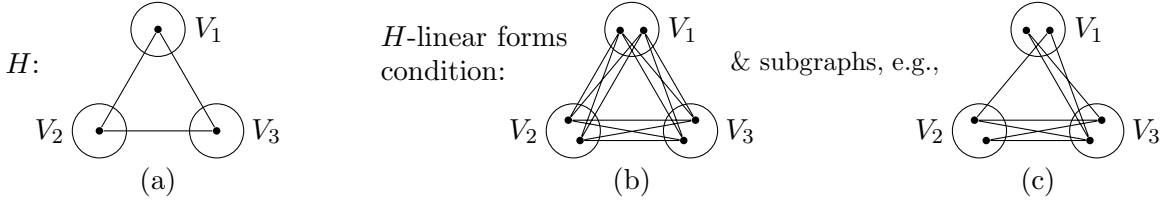
\begin{figure}
\begin{tikzpicture}[scale=.45]
  \begin{scope}[shift={(-1,0)}]
    \node at (-4,1) {$H$:};
  \node at (0,-2.5) {(a)};
  \draw (90:2) circle (.8); \node at (1.5,2) {$V_1$};
  \draw (210:2) circle (.8); \node at (-3.2,-1) {$V_2$};
  \draw (330:2) circle (.8);\node at (3.2,-1) {$V_3$};
  \node[p] (a) at (90:2) {};
  \node[p] (b) at (210:2) {};
  \node[p] (c) at (330:2) {};
  \draw (a)--(b)--(c)--(a);
\end{scope}

  \begin{scope}[shift={(13,0)}]
    \node[text width = 8em] at (-4,1.3) {$H$-linear forms condition:};
    \node[font=\small] at (6,1) {\& subgraphs, e.g.,};
  \node at (0,-2.5) {(b)};
  \draw (90:2) circle (.8); \node at (1.5,2) {$V_1$};
  \draw (210:2) circle (.8); \node at (-3.2,-1) {$V_2$};
  \draw (330:2) circle (.8);\node at (3.2,-1) {$V_3$};
  \node[p] (a1) at (80:2) {};
  \node[p] (a2) at (100:2) {};
  \node[p] (b1) at (200:2) {};
  \node[p] (b2) at (220:2) {};
  \node[p] (c1) at (320:2) {};
  \node[p] (c2) at (340:2) {};
  \draw (a1)--(b1)--(a2)--(b2)--(a1)
  (a1)--(c1)--(a2)--(c2)--(a1)
  (c1)--(b1)--(c2)--(b2)--(c1);
\end{scope}

  \begin{scope}[shift={(25,0)}]
  \node at (0,-2.5) {(c)};
  \draw (90:2) circle (.8); \node at (1.5,2) {$V_1$};
  \draw (210:2) circle (.8); \node at (-3.2,-1) {$V_2$};
  \draw (330:2) circle (.8);\node at (3.2,-1) {$V_3$};
  \node[p] (a1) at (80:2) {};
  \node[p] (a2) at (100:2) {};
  \node[p] (b1) at (200:2) {};
  \node[p] (b2) at (220:2) {};
  \node[p] (c1) at (320:2) {};
  \node[p] (c2) at (340:2) {};
  \draw (a1)--(b1)
  (a1)--(c1)--(a2)--(c2)
  (c1)--(b1)--(c2)--(b2)--(c1);
\end{scope}

\end{tikzpicture}
\vspace{-1em}
\caption{Linear forms conditions for $H = K_3$. See Example~\ref{ex:linear-forms}.} \label{fig:K3}
\end{figure}

  In general, the $H$-linear forms condition says that $\nu$ has
   roughly the expected density for the 2-blow-up\footnote{By the 2-blow-up of $H$
  we mean the hypergraph consisting of
  vertices $j^{(0)}, j^{(1)}$ for each $j \in J$, and edges
  $e^{(\omega)} := \{j^{(\omega_j)}: j \in e\}$ for any $e \in H$ and $\omega \in \{0,1\}^e$. We actually do not need
  the full strength of this assumption. It suffices to assume that
  $\nu$ has roughly the expected density for any subgraph of a \emph{weak} 2-blow-up of
  $H$, where by a weak 2-blow-up we mean the following. Fix some edge
  $e_1 \in H$ (we will need to assume the condition for all
  $e_1$). The weak 2-blow-up of $H$ with respect to $e_1$ is the
  subgraph of the usual $2$-blow-up consisting of all edges $e^{(\omega)}$ where
  $\omega_i = \omega_j$ for any $i,j \in e \setminus e_1$. This weaker
  version of the $H$-linear forms condition is all we shall use for
  the proof, although everything to follow will be stated as in
  Definition~\ref{def:H-lfc} for clarity.} of $H$
  as well as any subgraph of the 2-blow-up.
  Our linear forms condition for hypergraphs coincides with the one used by
  Tao~\cite[Def.~2.8]{Tao06jam}, although in \cite{Tao06jam} one
  assumes additional pseudorandomness hypotheses on $\nu$ known as the
  dual function condition and the correlation condition.

\subsection{Hypergraph removal lemma}

The hypergraph removal lemma was first proved by Gowers \cite{Gow07}
and by Nagle, R\"odl, Schacht, and Skokan \cite{NRS06,RS04,RS06}. It
states that for every $r$-uniform hypergraph $H$ on $h$ vertices,
every $r$-uniform hypergraph on $n$ vertices with $o(n^h)$ copies of
$H$ can be made $H$-free by removing $o(n^r)$ edges. As first
explicitly stated and proved by Tao \cite{Tao06jcta}, the proof of the
hypergraph removal lemma further gives that the edges can be removed
in a low complexity way (this idea will soon be made formal).  We will
use a slightly stronger version, where edges are given weights in
the interval $[0,1]$. This readily follows from the usual
version by a simple rounding argument, as done in
\cite[Thm.~3.7]{Tao06jam}. We state this result as Theorem
\ref{thm:dense-hypergraph-removal} below.

\begin{definition} \label{def:complexity-set}
  For any set $e$ of size $r$ and any $E_e \subseteq V_e = \prod_{j \in e}
  V_j$, we define the \emph{complexity} of $E_e$ to be the minimum
  integer $T$ such that there is a partition of $E_e$ into $T$ sets
  $E_{e,1}, \dots, E_{e,T}$, so that each $E_{e,i}$ is the set of
  $r$-cliques of some $(r-1)$-uniform hypergraph, meaning that there
  exists some $B_{f,i} \subseteq V_f$ for each $f \in \del e$ so that
  $1_{E_{e,i}}(x_e) = \prod_{f \in \del e} 1_{B_{f,i}}(x_f)$ for all
  $x_e \in V_e$.
\end{definition}

\begin{theorem}[Weighted hypergraph removal
  lemma]\label{thm:dense-hypergraph-removal} For every $\e > 0$ and finite set $J$,
  there exists $\delta > 0$ and $T > 0$ such that the following holds. Let $V=(J,(V_j)_{j \in J},r,H)$
  be a hypergraph system. Let $g$ be a weighted hypergraph on $V$
  satisfying $0 \leq g \leq 1$ and
  \[
  \EE\Bigl[ \prod_{e \in H} g_e(x_e) \Big\vert x \in V_J \Bigr]
  \leq \delta.
  \]
  Then for each $e \in H$ there exists a
  set $E_e' \subseteq V_e$ for which $V_e \setminus E'_e$ has
  complexity at most $T$ and such that
  \[
  \prod_{e \in H} 1_{E'_e}(x_e) = 0 \text{ for all } x \in V_J
  \]
  and for all $e \in H$ one has
  \[
  \EE[g_e(x_e)1_{V_e \setminus E'_e}(x_e) \vert x_e \in V_e] \leq \e.
  \]
\end{theorem}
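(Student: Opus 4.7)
The plan is to reduce the weighted statement to the standard unweighted hypergraph removal lemma via a thresholding and rounding argument, in the style of~\cite[Thm.~3.7]{Tao06jam}.

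First, I would fix a small threshold $\eta > 0$ (eventually on the order of $\e$) and, for each $e\in H$, form the Boolean superlevel set $G^\eta_e := \{x_e \in V_e : g_e(x_e) > \eta\}$. Since $1_{G^\eta_e} \leq \eta^{-1} g_e$ pointwise, the hypothesis $\EE[\prod_e g_e]\leq \delta$ yields
\[
\EE\bigl[\prod_{e \in H} 1_{G^\eta_e}(x_e)\bigr] \leq \delta\,\eta^{-|H|}.
\]
Taking $\delta$ small in terms of $\eta$ forces this Boolean $H$-density below the threshold required to invoke the structural unweighted hypergraph removal lemma of Gowers~\cite{Gow07}, Nagle--R\"odl--Schacht--Skokan~\cite{NRS06, RS04, RS06}, and Tao~\cite{Tao06jcta}.

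Next, I would apply that structural unweighted removal lemma to the Boolean system $(G^\eta_e)_{e \in H}$ with error parameter $\e/2$. This produces, for each $e$, a set $D_e \subseteq V_e$ of complexity at most some constant $T = T(\e, J)$, with $|D_e|/|V_e| \leq \e/2$ and $\prod_e 1_{G^\eta_e\setminus D_e}(x_e) \equiv 0$. Setting $E'_e := V_e \setminus D_e$, the complexity bound on $V_e\setminus E'_e = D_e$ is automatic and, since $g_e\leq 1$,
\[
\EE[g_e(x_e)\,1_{V_e\setminus E'_e}(x_e)] \;\leq\; |D_e|/|V_e| \;\leq\; \e/2,
\]
so the weighted mass condition holds.

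The main obstacle is verifying the pointwise condition $\prod_e 1_{E'_e}\equiv 0$. The Boolean removal conclusion only guarantees that, for every $x\in V_J$, either some $x_e\in D_e$ (in which case $\prod_e 1_{E'_e}(x) = 0$ directly) or some $x_e\notin G^\eta_e$ (in which case $g_e(x_e)\leq\eta$ but the point $x_e$ need not lie in $D_e$). The $D_e$ produced so far therefore need not cover all of $V_J$. Resolving this is the heart of the rounding: one must enlarge $D_e$ to also cover the low-density region $V_e\setminus G^\eta_e$, exploiting the fact that this region has $g_e$-mass at most $\eta$ so that the enlargement contributes at most $\eta$ to the weighted bound. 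The delicate point is packaging this enlargement into a set of bounded complexity (at the price of inflating $T$), which is the content of Tao's careful rounding in~\cite[Thm.~3.7]{Tao06jam}; once this is done, taking $\eta\sim\e$ and tracking the parameters yields the desired $E'_e$ with all three properties.
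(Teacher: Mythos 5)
The paper does not prove Theorem~\ref{thm:dense-hypergraph-removal} from scratch; it points to Tao~\cite[Thm.~3.7]{Tao06jam}, which derives the weighted statement from the Boolean removal lemma of~\cite{Tao06jcta} by thresholding. Your outline follows the same plan, but Step~3 invokes the wrong form of the Boolean lemma, and the repair you sketch does not close the resulting gap.

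What you actually apply is the edge-deletion form of removal: small sets $D_e$ with $\prod_{e}1_{G^\eta_e\setminus D_e}\equiv 0$. Setting $E'_e:=V_e\setminus D_e$ indeed makes $V_e\setminus E'_e=D_e$ of bounded complexity with small mass, but the pointwise identity $\prod_e 1_{E'_e}\equiv 0$ genuinely fails, as you note: any $x$ with $x_e\notin G^\eta_e$ for some $e$ but $x_e\notin D_e$ for all $e$ lies in $\bigcap_e E'_e$. Your proposed repair, enlarging $V_e\setminus E'_e$ to $D_e\cup(V_e\setminus G^\eta_e)$, restores the identity but wrecks the complexity bound: $V_e\setminus G^\eta_e=\{x_e : g_e(x_e)\leq\eta\}$ is an arbitrary sublevel set of $g_e$, and there is no way in general to fold it into a bounded-complexity partition ``at the price of inflating $T$.'' That is not what Tao's rounding does, and attributing the resolution of this step to~\cite[Thm.~3.7]{Tao06jam} leaves the hard part unaddressed.

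The fix is to invoke the removal lemma in the form that is precisely the $\{0,1\}$-valued special case of the theorem being proved (this is the content of~\cite{Tao06jcta}): if $\EE[\prod_e 1_{G^\eta_e}]\leq\delta_0$, one obtains sets $E'_e$ with $V_e\setminus E'_e$ of complexity at most $T$, with $\prod_e 1_{E'_e}(x)=0$ for all $x\in V_J$, and with $\EE[1_{G^\eta_e}(x_e)1_{V_e\setminus E'_e}(x_e)\vert x_e\in V_e]\leq\e/2$. The crucial feature is that $E'_e$ may contain points outside $G^\eta_e$; the bounded complexity of $V_e\setminus E'_e$ comes from the regularity partition internal to the proof of the removal lemma, not from anything supplied by the user. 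With this form no enlargement is needed: split $V_e\setminus E'_e$ along $G^\eta_e$, use $g_e\leq 1$ on $G^\eta_e$ and $g_e\leq\eta$ off $G^\eta_e$, and conclude
\[
\EE[g_e(x_e)1_{V_e\setminus E'_e}(x_e)\vert x_e\in V_e]
\leq \EE[1_{G^\eta_e}(x_e)1_{V_e\setminus E'_e}(x_e)\vert x_e\in V_e]+\eta \leq \e/2+\eta\leq\e
\]
for $\eta\leq\e/2$. Your Step~2 then fixes the quantifiers: take $\eta=\e/2$ and $\delta\leq\delta_0\cdot\eta^{|H|}$, where $\delta_0$ is the Boolean threshold at error parameter $\e/2$, and the other two conclusions (the complexity bound and the vanishing product) are inherited verbatim from the Boolean application.
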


We prove a relativized extension of the hypergraph removal lemma. A
relative hypergraph removal lemma was already established by Tao in \cite{Tao06jam}, where he assumed the majorizing measure satisfies three conditions: the linear forms condition, the correlation condition, and the dual function condition. We again show that a linear forms condition is sufficient.

\begin{theorem}[Relative hypergraph removal lemma]
  \label{thm:rel-removal}
  For every $\e > 0$ and finite set $J$, there exists $\delta > 0$ and $T > 0$ such
  that the following holds.
  Let $V=(J,(V_j)_{j \in J},r,H)$ be a hypergraph system. Let $\nu$ and $g$
  be weighted hypergraphs on $V$. Suppose $0 \leq g \leq \nu$,
  $\nu$ satisfies the $H$-linear forms condition, and $N$ is sufficiently large. If
  \[
  \EE\Bigl[ \prod_{e \in H} g_e(x_e) \Big\vert x \in V_J \Bigr]
  \leq \delta,
  \]
then for each $e \in H$ there exists a set
$E_e' \subseteq V_e$ for which $V_e \setminus E'_e$ has complexity
at most $T$ and such that
\[
\prod_{e \in H}1_{E'_e}(x_e) = 0  \text{ for all } x \in V_J
\]
and for all $e \in H$ one has
\[
\mathbb{E}[g_e(x_e)1_{V_e \setminus E'_e}(x_e)|x_e \in V_e] \leq \e.
\]
\end{theorem}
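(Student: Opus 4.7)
The plan is to mirror the standard derivation of removal from regularity plus counting, using the weak sparse hypergraph regularity lemma (Section~\ref{sec:weak-regularity}) together with the sparse counting lemma (Section~\ref{sec:counting-lemma}) to reduce the theorem to the already-established dense weighted removal lemma (Theorem~\ref{thm:dense-hypergraph-removal}). Specifically, I would replace $g$ by a ``dense model'' weighted hypergraph $\tg = (\tg_e)_{e\in H}$ with $0 \leq \tg_e \leq 1$ such that $g_e - \tg_e$ has negligible inner product against every low-complexity test set, apply dense removal to $\tg$, and then transfer the conclusion back to $g$.

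Concretely, I would first fix $\e$ and run the dense removal lemma at tolerance $\e/2$ to extract the associated parameters $\delta_0 = \delta_0(\e/2)$ and $T = T_0(\e/2)$. Then I would invoke the weak sparse regularity lemma with an error parameter $\eta \leq \e/(2T)$ (chosen precisely below) to produce, for each $e \in H$, a bounded function $\tg_e : V_e \to [0,1]$ such that for every subset $B_e \subseteq V_e$ of complexity $1$, i.e., of the form $1_{B_e}(x_e) = \prod_{f \in \del e} 1_{B_f}(x_f)$, one has
\[
\bigl|\EE[(g_e - \tg_e)(x_e) 1_{B_e}(x_e) \vert x_e \in V_e]\bigr| \leq \eta.
\]
The $H$-linear forms condition on $\nu$ enters precisely here, to guarantee the existence of such a bounded dense model despite $g$ being majorized only by the sparse measure $\nu$.

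Next I would apply the sparse counting lemma to $g$ and $\tg$ to conclude
\[
\Bigl| \EE\Bigl[\prod_{e\in H} g_e(x_e) \Big\vert x\in V_J\Bigr] - \EE\Bigl[\prod_{e\in H} \tg_e(x_e) \Big\vert x \in V_J\Bigr] \Bigr| \leq \delta_0/2,
\]
valid once $\eta$ is small enough (and $N$ is large enough) in terms of $\delta_0$ and $|J|$. Setting $\delta := \delta_0/2$, the hypothesis $\EE[\prod_e g_e] \leq \delta$ then gives $\EE[\prod_e \tg_e] \leq \delta_0$. Feeding $\tg$ into Theorem~\ref{thm:dense-hypergraph-removal} yields sets $E'_e \subseteq V_e$ of complexity at most $T$ with $\prod_{e \in H} 1_{E'_e}(x_e) = 0$ on $V_J$ and $\EE[\tg_e 1_{V_e \setminus E'_e}] \leq \e/2$ for every $e$.

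To finish, I would use that $V_e \setminus E'_e$ is a disjoint union of at most $T$ complexity-$1$ sets (Definition~\ref{def:complexity-set}), so that applying the bound above to each piece separately gives
\[
\bigl| \EE[(g_e - \tg_e) 1_{V_e \setminus E'_e}(x_e) \vert x_e \in V_e] \bigr| \leq T \eta \leq \e/2,
\]
and hence $\EE[g_e 1_{V_e \setminus E'_e}] \leq \EE[\tg_e 1_{V_e \setminus E'_e}] + \e/2 \leq \e$, as required. The delicate point I would need to handle carefully is the quantifier ordering: the complexity $T$ coming out of dense removal must be fixed before $\eta$ is chosen, and then the regularity lemma must deliver an approximation of cut-norm quality $\eta$ that is simultaneously fine enough for the counting lemma to give the $\delta_0/2$ transfer. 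Once the linear forms condition is deployed to produce the dense model and to run the counting lemma, the rest is a direct imitation of the dense case.
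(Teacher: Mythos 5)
Your proposal is correct and follows essentially the same route as the paper's proof: apply the weak sparse regularity lemma (with upper regularity supplied by Lemma~\ref{lem:nu-upper-regular} via the linear forms condition) to obtain a bounded dense model $\tg$, use the counting lemma to transfer the smallness of $\EE[\prod_e g_e]$ to $\EE[\prod_e \tg_e]$, run the dense weighted removal lemma (Theorem~\ref{thm:dense-hypergraph-removal}) on $\tg$, and transfer the edge-removal bound back to $g$ by decomposing $V_e\setminus E'_e$ into at most $T$ complexity-one pieces and applying the discrepancy estimate piecewise. The only thing the paper handles slightly differently is notational: it uses $o(1)$ bookkeeping throughout rather than fixing $\eta$ in terms of $T$ and $\delta_0$ up front, but your explicit quantifier ordering is exactly the correct way to unwind that.
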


In Section~\ref{sec:removal-lemma} we will deduce Theorem~\ref{thm:rel-removal} from
Theorem~\ref{thm:dense-hypergraph-removal} by applying the weak regularity
lemma and the counting lemma which are stated in the next two subsections.

\subsection{Weak hypergraph regularity}

The Frieze-Kannan weak regularity lemma~\cite{FK99} allows one to
approximate in cut-norm a matrix (or graph) with entries in the
interval $[0,1]$ by another matrix of low complexity. A major
advantage over simply applying Szemer\'edi's regularity lemma is that
the complexity has only an exponential dependence on the approximation
parameter, as opposed to the tower-type bound that is incurred by
Szemer\'edi's regularity lemma. Unfortunately, these regularity lemmas
are not meaningful for sparse graphs as the error term is too large in
this setting. Following sparse extensions of Szemer\'edi's regularity
lemma by Kohayakawa~\cite{Koh97} and R\"odl, a sparse extension of the
weak regularity lemma  was proved by Bollob\'as and Riordan \cite{BR09} and by Coja-Oghlan, Cooper, and
Frieze~\cite{CCF09}. In \cite{CCF09}, they further generalize this to $r$-dimensional
tensors (or $r$-uniform hypergraphs), but it only gives an
approximation which is close in density on all hypergraphs induced by
large vertex subsets. In order to prove a relative hypergraph removal
lemma, we will need a stronger approximation, which is close in
density on all dense $r$-uniform hypergraphs formed by the
clique set of some $(r-1)$-uniform hypergraph. In Section~\ref{sec:weak-regularity},
we will prove a more general sparse regularity lemma. For now, we state the result in the form that we need.

The weak regularity lemma approximates a weighted hypergraph $g$ on
$V$ by another weighted hypergraph $\tg$ of bounded complexity which satisfies $0 \leq \tg \leq
1$. One can think of $\tg$ as a dense approximation of $g$. The
following definition makes precise in what sense $\tg$ approximates $g$.

\begin{definition}
  [Discrepancy pair] \label{def:disc-pair}
  Let $e$ be a finite set and $g_e, \tg_e \colon \prod_{j \in e} V_j
  \to \RR_{\geq 0}$ be two nonnegative functions. We say that
  $(g_e,\tg_e)$ is an \emph{$\e$-discrepancy pair} if for
  all subsets $B_f \subseteq V_f$, $f \in \del e$, one has
  \begin{equation}
    \label{eq:disc-pair}
    \Bigl\lvert
    \EE\Bigl[
    (g_e(x_e) - \tg_e(x_e)) \prod_{f \in \del e}
        1_{B_f}(x_f)
        \Big\vert x_e \in V_e
        \Bigr]\Bigr\rvert \leq \e.
  \end{equation}
  For two weighted hypergraphs $g$ and $\tg$ on $(J, (V_j)_{j \in J},
  r, H)$, we say that $(g, \tg)$ is an \emph{$\e$-discrepancy pair} if
  $(g_e, \tg_e)$ is an $\e$-discrepancy pair for all $e \in H$.
\end{definition}

One needs an
additional hypothesis on $g$ in order to prove a weak regularity
lemma. The condition roughly says that $g$ contains ``no dense spots.''

\begin{definition}
  [Upper regular] \label{def:upper-regular}
  Let $e$ be a finite set, $g_e \colon \prod_{j \in e} V_j
  \to \RR_{\geq 0}$ a nonnegative function, and $\eta > 0$. We say that $g_e$ is
  \emph{upper $\eta$-regular} if for all subsets $B_f \subseteq V_f$,
  $f \in \del e$, one has
    \begin{equation}
    \label{eq:upper-regular}
    \EE\Bigl[
    (g_e(x_e) - 1) \prod_{f \in \del e}
        1_{B_f}(x_f)
        \Big\vert x_e \in V_e
        \Bigr] \leq \eta.
  \end{equation}
A hypergraph $g$ on  on $(J, (V_j)_{j \in J}, r, H)$ is upper $\eta$-regular if $g_e$ is upper $\eta$-regular for all $e \in H$.
\end{definition}

Note that unlike \eqref{eq:disc-pair}, there is no absolute value on
the left-hand side of \eqref{eq:upper-regular}. The upper regularity
hypothesis is needed for establishing the sparse regularity
lemma. Fortunately, this mild
hypothesis is automatically satisfied in our setting. We will say more
about this in Section  \ref{subsect:stronglinearform}.

\begin{lemma} \label{lem:nu-upper-regular}
Let $V=(J,(V_j)_{j \in J},r,H)$ be a hypergraph system. Let $\nu$ and $g$ be
weighted hypergraphs on $V$. Suppose $0 \leq g \leq \nu$ and $\nu$
satisfies the $H$-linear forms condition. Then $g$ is upper $o(1)$-regular.
\end{lemma}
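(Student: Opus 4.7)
The plan is to reduce the claim to a statement about $\nu$ alone, apply a Gowers--Cauchy--Schwarz ``box inequality'' cascade to strip off the boundary indicators, and then use the $H$-linear forms condition to evaluate the resulting product.

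First, since $0 \le g_e \le \nu_e$ pointwise and $\prod_{f \in \partial e} \I_{B_f}(x_f) \ge 0$, the inequality $(g_e - 1) \prod_f \I_{B_f} \le (\nu_e - 1) \prod_f \I_{B_f}$ holds pointwise. So it suffices to show that for every $e \in H$,
\[
\EE\Bigl[(\nu_e(x_e) - 1) \prod_{f \in \partial e} \I_{B_f}(x_f) \,\Big\vert\, x_e \in V_e\Bigr] = o(1),
\]
uniformly in the subsets $B_f \subseteq V_f$, $f \in \partial e$.

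Next, label $e = \{1, \dots, r\}$ and write $u_j := \I_{B_{e \setminus \{j\}}}$; each $u_j$ is $[0,1]$-valued and is the unique boundary indicator not depending on the variable $x_j$. Apply Cauchy--Schwarz in the variables $x_1, \dots, x_r$ in turn. At the $k$th step, $u_k$ can be factored outside the inner $x_k$-average and absorbed (using $|u_k| \le 1$) into a side factor of at most $1$, while the squaring duplicates $x_k$ into two copies $x_k^{(0)}, x_k^{(1)}$. After all $r$ steps, every $u_j$ has been eliminated and every coordinate has been doubled, yielding the standard box inequality
\[
\Bigl|\EE\Bigl[(\nu_e - 1)\prod_f u_f\Bigr]\Bigr|^{2^r} \;\le\; \EE\Bigl[\prod_{\omega \in \{0,1\}^e} (\nu_e(x_e^{(\omega)}) - 1) \,\Big\vert\, x_e^{(0)}, x_e^{(1)} \in V_e\Bigr].
\]

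Finally, expand the right-hand side as
\[
\sum_{T \subseteq \{0,1\}^e} (-1)^{2^r - |T|}\, \EE\Bigl[\prod_{\omega \in T} \nu_e(x_e^{(\omega)})\Bigr].
\]
Setting $n_{e,\omega} = 1$ for $\omega \in T$ and $n_{e',\omega} = 0$ for every other pair in Definition~\ref{def:H-lfc} gives $\EE\bigl[\prod_{\omega \in T} \nu_e(x_e^{(\omega)})\bigr] = 1 + o(1)$ for each of the $2^{2^r}$ choices of $T$, uniformly. Since $\sum_{T} (-1)^{2^r - |T|} = (1-1)^{2^r} = 0$, the leading terms cancel and the signed sum is $o(1)$. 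Taking $2^r$-th roots and combining with Step~1 finishes the proof.

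The main subtlety is bookkeeping in the Cauchy--Schwarz cascade: one must check at each stage that the indicator being absorbed really is independent of the variable being averaged and that after all $r$ steps precisely the $2^r$ copies of $\nu_e - 1$ indexed by $\{0,1\}^e$ survive. Once that template is in place, the cancellation provided by the $H$-linear forms condition is immediate, and neither the correlation nor the dual function condition is invoked.
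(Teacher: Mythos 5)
Your proof is correct and follows essentially the same route as the paper: reduce to $\nu$ by the pointwise comparison $g_e \le \nu_e$, apply a Gowers--Cauchy--Schwarz cascade to bound $\bigl\lvert\EE[(\nu_e-1)\prod_f \I_{B_f}]\bigr\rvert$ by the $2^{\abs{e}}$-th root of the box-norm quantity $\EE\bigl[\prod_{\omega\in\{0,1\}^e}(\nu_e(x_e^{(\omega)})-1)\bigr]$, and then expand and invoke the linear forms condition to show this is $o(1)$. This is precisely the paper's Lemma~\ref{lem:nu-1-disc} together with the one-line deduction the paper makes from it; the only cosmetic difference is that the paper organizes the Cauchy--Schwarz induction through an explicit family of intermediate quantities $Q_d$ indexed by $d \subseteq e$, while you describe the same cascade variable by variable.
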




Define the \emph{complexity} of a function $g \colon V_e \to [0,1]$ to
be the minimum $T$ such that there is a partition of
$V_e$ into $T$ subgraphs $S_1, \dots S_T$, each of which is the set
of $r$-cliques of some $(r-1)$-uniform hypergraph (see Definition~\ref{def:complexity-set}), and such that $g$
is constant on each $S_i$. We state the regularity lemma below with a
complexity bound on $\tg$, although the complexity bound will not actually be needed for
our application.

\begin{theorem}[Sparse weak regularity lemma] \label{thm:weak-reg-hyp}
  For any $\e > 0$ and function $g \colon V_1 \x \cdots \x V_r
  \to \RR_{\geq 0}$ which is upper $\eta$-regular with $\eta \leq 2^{-40r/\e^2}$, there exists $\tg
  \colon V_1 \x \cdots \x V_r \to [0,1]$ with complexity at most $2^{20r/\e^2}$ such that $(g,\tg)$ is an
  $\e$-discrepancy pair.
\end{theorem}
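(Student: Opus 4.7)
The plan is to adapt the Frieze--Kannan energy-increment argument to the sparse setting. Writing $e = \{1, \dots, r\}$, I will maintain, for each $f \in \del e$, a partition $\pi_f^{(i)}$ of $V_f$, which together induce a partition $\cP^{(i)}$ of $V_e$ with atoms $\prod_f A_f$ for $A_f \in \pi_f^{(i)}$; each atom is a clique set in the sense of Definition~\ref{def:complexity-set}. Letting $g_{\cP^{(i)}}$ denote the cellwise average of $g$, I set $\tg^{(i)} := \min(g_{\cP^{(i)}}, 1)$, which takes values in $[0,1]$ and has complexity at most the number $T_i$ of atoms of $\cP^{(i)}$. Starting from the trivial partitions, at step $i$ I test whether $(g, \tg^{(i)})$ is an $\e$-discrepancy pair; if not, a witnessing family $B_f^{(i)} \subseteq V_f$ yields a violation, and I refine each $\pi_f^{(i)}$ by inserting $B_f^{(i)}$.

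The main new ingredient is the choice of energy. The naive choice $\|g_{\cP}\|_2^2$ fails here because $g$ may be unbounded and that energy need not be bounded above. I will instead track
\[
F_i := \EE[\psi(g_{\cP^{(i)}})], \qquad \psi(x) := \begin{cases} x^2, & 0 \leq x \leq 1, \\ 2x - 1, & x > 1. \end{cases}
\]
The function $\psi$ is $C^1$ and convex, so $F_i$ is nondecreasing under refinement by conditional Jensen. Moreover $\psi(x) \leq 2x$ for $x \geq 0$, so upper regularity gives $F_i \leq 2\EE[g] \leq 2(1+\eta) < 3$. A direct case-check of the Bregman divergence of $\psi$ shows that for each cell $C$ with sub-cells $\{C_j\}$, cell averages $a_\bullet$ of $g$, and their truncations $\tilde{a}_\bullet := \min(a_\bullet, 1)$, one has $\sum_j p_j \psi(a_j) - p\psi(a_C) \geq \sum_j p_j (\tilde{a}_j - \tilde{a}_C)^2$; summing over cells yields $F_{i+1} - F_i \geq \|\tg^{(i+1)} - \tg^{(i)}\|_2^2$.

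To convert the cut-norm violation into this $L^2$-increment, let $S(x_e) := \prod_f 1_{B_f^{(i)}}(x_f)$, which is $\cP^{(i+1)}$-measurable. Since the conditional expectation is the $L^2$ projection, $\EE[(g - \tg^{(i+1)}) S] = \EE[(g_{\cP^{(i+1)}} - 1)_+ S]$; each of the (at most $T_i$) sub-cells of $\cP^{(i+1)}$ contained in $S$ is itself a clique set, so upper regularity applied on each bounds this quantity by $T_i \eta$. Hence $|\EE[(\tg^{(i+1)} - \tg^{(i)}) S]| \geq \e - T_i \eta$, which the hypothesis $\eta \leq 2^{-40r/\e^2}$ keeps at least $\e/2$ throughout the iteration; Cauchy--Schwarz then yields $\|\tg^{(i+1)} - \tg^{(i)}\|_2^2 \geq \e^2/4$.

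With $F_i \in [0, 3)$ and per-step increment at least $\e^2/4$, the algorithm terminates after at most $12/\e^2$ iterations. At termination each $\pi_f$ has at most $2^{12/\e^2}$ cells, so $\tg$ has complexity at most $2^{12r/\e^2} \leq 2^{20r/\e^2}$, and $(g, \tg)$ is the desired $\e$-discrepancy pair. The main obstacle is the unboundedness of $g$ in the sparse setting: replacing $x^2$ by the convex majorant $\psi$, which coincides with $x^2$ on $[0,1]$ but grows only linearly above, is what simultaneously bounds the energy via $\EE[g]$ alone and retains enough quadratic curvature on $[0,1]$ for the Bregman divergence to dominate the $L^2$-difference of successive truncated approximants.
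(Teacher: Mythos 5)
Your proof is correct and follows the same high-level energy-increment strategy as the paper's Lemma~\ref{lem:weak-reg-general}, but with two technically interesting differences, and is arguably cleaner. The paper argues contrapositively: it first constructs a partition $P$ admitting no energy-increasing $t$-splittable refinement, sets $\tg = g_P \wedge 1$, and then derives a contradiction from a hypothetical discrepancy violation, whereas you run the refinement iteration directly until the discrepancy condition holds. More substantively, the paper (following Scott) uses $\phi(u)=u^2$ on $[0,2]$ and linear beyond, and handles the unboundedness of $g$ by restricting the cut to ``large'' sub-cells (both $S_i\cap A$ and $S_i\setminus A$ of measure at least $t\eta$), so that the relevant conditional means stay below $2$ and the Pythagorean identity $\EE[g_{\hat P}^2]-\EE[g_P^2]=\EE[(g_{\hat P}-g_P)^2]$ applies. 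Your $\psi$, truncated at $1$ rather than $2$, is chosen precisely so that its Bregman divergence dominates $(\tilde a - \tilde b)^2$ pointwise, which I have verified by the four-case check; this makes the $L^2$-increment $\norm{\tg^{(i+1)}-\tg^{(i)}}_2^2$ fall out of the energy increase directly, with no large-piece restriction---a genuine simplification that kills the unboundedness issue at the source. Your step of absorbing the $T_i\eta$ error via upper regularity applied to each of the at most $T_i$ sub-cells of $\cP^{(i+1)}$ inside the violating product set $S$ is correct and mirrors the paper's use of inequality~\eqref{upregcomp}; the hypothesis $\eta \le 2^{-40r/\e^2}$ comfortably keeps $T_i\eta\le\e/2$ for $i \le 12/\e^2$, and your numerics even give a slightly sharper complexity bound $2^{12r/\e^2}$ versus the paper's $2^{20r/\e^2}$. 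One remark: the paper proves the abstract Lemma~\ref{lem:weak-reg-general} for arbitrary $t$-splittable families $\mathcal{F}$ and then specializes; you work directly with product partitions $\prod_{f\in\del e}\pi_f$, which suffices here because the discrepancy condition tests only against product sets $\prod_f 1_{B_f}$, but loses that extra generality.
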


\noindent
The special case $r=2$ is the sparse extension of the Frieze-Kannan
weak regularity lemma.

\subsection{Counting lemma}

Informally, the counting lemma says that if $(g,\tg)$ is an
$\e$-discrepancy pair, with the additional assumption that $g \leq
\nu$ and $\tg \leq 1$, then the density of $H$ in $\tg$ is close to
the density of $H$ in $g$. This sparse counting lemma is perhaps the most
novel ingredient in this paper.

\begin{theorem}[Counting lemma]
  \label{thm:counting-lemma}
  For every $\gamma > 0$ and finite set $J$, there exists an $\e > 0$
  so that the following holds.
  Let $V = (J, (V_j)_{j \in J}, r, H)$ be a hypergraph system and
  $\nu$, $g$, $\tg$ be weighted hypergraphs on $V$. Suppose that $\nu$
  satisfies the $H$-linear forms condition and $N$ is sufficiently
  large. Suppose also that $0 \leq g
  \leq \nu$, $0 \leq \tg \leq 1$, and
  $(g,\tg)$ is an $\e$-discrepancy pair. Then
  \begin{equation}
    \label{eq:counting-lemma}
    \Bigl\vert \EE\Bigl[\prod_{e \in H} g_e(x_e)
        \Big\vert
        x \in V_J
        \Bigr] -
        \EE\Bigl[\prod_{e \in H} \tg_e(x_e)
        \Big\vert
        x \in V_J
        \Bigr]\Bigr\rvert \leq \gamma.
  \end{equation}
\end{theorem}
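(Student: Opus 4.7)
The plan is to reduce the counting lemma to the discrepancy-pair hypothesis via a hybrid (telescoping) argument over the edges of $H$, followed by an iterated Cauchy--Schwarz densification on each hybrid term, with the $H$-linear forms condition on $\nu$ absorbing all auxiliary $\nu$-factors that appear. This is a hypergraph extension of the scheme developed for graphs in \cite{CFZ14}.

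First, fix an ordering of $H$ and telescope:
\[
\EE\Bigl[\prod_{e \in H} g_e - \prod_{e \in H} \tg_e\Bigr]
= \sum_{e_0 \in H} \EE\Bigl[(g_{e_0} - \tg_{e_0})(x_{e_0}) \prod_{e \prec e_0} \tg_e(x_e) \prod_{e \succ e_0} g_e(x_e)\Bigr].
\]
It therefore suffices to bound each summand by $\gamma/|H|$. Each summand has the form $X = \EE[(g_{e_0} - \tg_{e_0})(x_{e_0}) \prod_{e \ne e_0} h_e(x_e)]$ with each $h_e \in \{g_e,\tg_e\}$, hence $0 \le h_e \le \nu_e + 1$.

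Second, for each hybrid term $X$, run an iterated Cauchy--Schwarz densification. For each edge $e' \ne e_0$ in turn, apply Cauchy--Schwarz in the variables of $J \setminus e'$ so as to effectively densify $h_{e'}$: the error from replacing $h_{e'} = g_{e'}$ by $\tg_{e'}$ is controlled by the discrepancy-pair condition for $(g_{e'}, \tg_{e'})$, which contributes a factor of $\e$, while the remaining duplicated $h_e$-factors (bounded by $\nu_e + 1$) assemble into a subgraph of the weak $2$-blow-up of $H$ with respect to $e'$, whose expectation is $1 + o(1)$ by the $H$-linear forms condition on $\nu$. The duplication pattern must be arranged so that the residual $(g_{e'} - \tg_{e'})$ factor appears linearly against a bounded, partition-structured test function on $V_{\partial e'}$, so that the discrepancy-pair condition applies after approximating the test function by a bounded linear combination of indicator products $\prod_{f \in \partial e'} 1_{B_f}(x_f)$. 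Iterating these densifications over all $e' \ne e_0$ and then invoking the $(g_{e_0}, \tg_{e_0})$ discrepancy-pair condition at the end yields $|X| \le C \e^{\alpha} + o(1)$, where $C, \alpha > 0$ depend only on $|J|$. Choosing $\e$ small in terms of $\gamma$ then finishes the proof.

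The central obstacle is the organization of the Cauchy--Schwarz scheme: the order and structure of the duplications must simultaneously present each residual $(g_{e'} - \tg_{e'})$ factor linearly against a partition-structured test function (so the discrepancy-pair condition is applicable), and collect the duplicated $h_e$-factors into a configuration which is a subgraph of a $2$-blow-up of $H$ (so that the linear forms condition on $\nu$ controls it, avoiding any appeal to a correlation condition). For hypergraphs the face structure is richer than in the graph case of \cite{CFZ14}, so tracking how duplicated variables interact with edges $e$ of various $|e \cap e'|$ is the substantive technical content of the proof.
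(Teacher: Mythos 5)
Your proposal identifies the three right ingredients (telescoping, iterated Cauchy--Schwarz absorbing into the linear forms condition, and a densification step controlled by the discrepancy-pair hypothesis), but the order and organization you propose does not close the argument, and the gap is exactly the one you flag as "the central obstacle" without resolving it.

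The issue with telescoping first: after splitting into hybrid terms
\[
X = \EE\Bigl[(g_{e_0} - \tg_{e_0})(x_{e_0}) \prod_{e \prec e_0} \tg_e(x_e) \prod_{e \succ e_0} g_e(x_e)\Bigr],
\]
the test function multiplying $(g_{e_0}-\tg_{e_0})$ after fixing $x_{J\setminus e_0}$ involves the sparse factors $g_e$, which are bounded only by $\nu_e$, not by $1$. The discrepancy-pair condition only controls $\EE[(g_{e_0}-\tg_{e_0})\prod_{f\in\del e_0} u_f]$ for $[0,1]$-valued $u_f$, so it cannot be applied to such a hybrid term directly, and you cannot recover boundedness by Cauchy--Schwarz duplications alone: each duplication introduces additional unbounded $\nu$-weighted factors against $(g_{e_0}-\tg_{e_0})$, and you never arrive at a bounded test function unless you have already densified the \emph{other} edges. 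The paper's telescoping lemma (its Lemma~6.1 analogue) is deliberately stated only in the case where all edges but one are already dense, precisely because of this obstruction.

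The paper avoids the telescope-first organization. It instead runs an induction on $\abs{\set{e\in H: \nu_e\neq 1}}$, reducing the number of sparse edges one at a time. To densify a single edge $e_1$: (i) it introduces the \emph{conditional expectations} $g'_{e_1}(x_{e_1}) := \EE[\prod_{e\neq e_1} g_e \mid x_{J\setminus e_1}]$ and $\tg'_{e_1}$, and similarly $\nu'_{e_1}$; (ii) it applies a single outer Cauchy--Schwarz to reduce to $\EE[(g'_{e_1}-\tg'_{e_1})^2]$, using a separately proved strong linear forms lemma (itself iterated Cauchy--Schwarz) to replace $\nu_{e_1}$ by $1$; (iii) it \emph{caps} $g'_{e_1}$ at $1$, with the capping error controlled by $\EE[(\nu'_{e_1}-1)^2]=o(1)$; and (iv) it shows $(g'_{e_1}\wedge 1,\tg'_{e_1})$ is an $\e'$-discrepancy pair with $\e'=o_{\e\to 0}(1)$ by invoking the \emph{induction hypothesis} with $(\nu_{e_1},g_{e_1},\tg_{e_1})$ replaced by $(1, \text{indicator product}, \text{indicator product})$. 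Only then does it expand $\EE[(g'_{e_1}-\tg'_{e_1})(g'_{e_1}\wedge 1 - \tg'_{e_1})]$ and apply the induction hypothesis again, now with $e_1$ dense. None of the conditional-expectation step, the capping step, or the double use of the inductive hypothesis appears in your sketch, and these are what turn the "central obstacle" into a closable argument. Also note the paper obtains $o_{\e\to 0}(1)$ rather than the polynomial bound $C\e^\alpha+o(1)$ you assert; the inductive structure precludes such an explicit rate without further work.
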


As a corollary, Theorem~\ref{thm:counting-lemma} also holds if the
hypothesis $0 \leq
\tg \leq 1$ is replaced by $0 \leq \tg \leq \nu$. Indeed, we can use
the weak regularity lemma, Theorem~\ref{thm:weak-reg-hyp}, to find a
common $1$-bounded approximation to $g$ and $\tg$. The result then follows from Theorem~\ref{thm:counting-lemma}
and the triangle inequality.

To summarize, to get a counting lemma for a fixed hypergraph $H$ in a subgraph of a
pseudorandom host hypergraph, it suffices to know that the host
hypergraph has approximately the expected count for a somewhat larger
family of hypergraphs (namely, subgraphs of the 2-blow-up of $H$).

\section{The relative Szemer\'edi theorem} \label{sec:relative-szemeredi}

In this section, we deduce the relative Szemer\'edi theorem,
Theorem~\ref{thm:rel-sz}, from the relative hypergraph removal lemma, Theorem~\ref{thm:rel-removal}. We
use the relative hypergraph removal lemma to prove a relative arithmetic
removal lemma, Theorem~\ref{thm:rel-mul-sz-removal}. This result then easily implies a relative version of the
multidimensional Szemer\'edi theorem of Furstenberg and Katznelson~\cite{FK78}. This is Theorem~\ref{thm:rel-mul-sz} below. The relative Szemer\'edi
theorem, Theorem~\ref{thm:rel-sz}, follows as a special case of Theorem~\ref{thm:rel-mul-sz} by setting $Z = Z' = \ZZ_N$ and
$\phi_j(d) = (j-1)d$. One may easily check that the linear forms condition for the
resulting hypergraph is satisfied if $\nu \colon \ZZ_N \to \RR_{\geq 0}$
satisfies the $k$-linear forms condition.


The statement and proof of Theorem~\ref{thm:rel-mul-sz} closely
follows the write-up in Tao
\cite [Thm~2.18]{Tao06jam}, adapted in a straightforward way to our
new pseudorandomness conditions as well as to the
slightly more general setting of functions instead of subsets. Earlier versions of this type of
argument for deducing Szemer\'edi-type results (in the dense setting) from graph and hypergraph
removal lemmas were given by Ruzsa and Szemer\'edi~\cite{RS78},
Frankl and R\"odl~\cite{FR02}, and Solymosi~\cite{Sol04,Sol03}.

\begin{theorem}[Relative multidimensional Szemer\'edi theorem]
  \label{thm:rel-mul-sz} For a finite set $J$ and $\delta > 0$, there
  exists $c > 0$ so that the following holds. Let $Z, Z'$ be two finite additive groups
  and let $(\phi_j)_{j \in J}$ be a finite collection of group
  homomorphisms $\phi_j: Z \to Z'$ from $Z$
  to $Z'$.  Assume that the elements $\{
  \phi_i(d) - \phi_j(d): i,j \in J, d \in Z\}$ generate $Z'$ as an
  abelian group.  Let $\nu: Z' \to \RR_{\geq 0}$ be a nonnegative
  function with the property that in the hypergraph system $V = (J,
  (V_j)_{j \in J}, r,H)$, with $V_j := Z$, $r := |J|-1$, and $H := {J
    \choose r}$, the weighted hypergraph $(\nu_e)_{e \in H}$ defined
  by
\[
\nu_{J \backslash \{j\}}((x_i)_{i \in J\setminus\{j\}}) := \nu\Bigl(\sum_{i \in J\setminus\{j\}}
(\phi_i(x_i) - \phi_j(x_i))\Bigr)
\]
satisfies the $H$-linear forms condition. Assume that $N$ is sufficiently
large. Then, for any $f \colon Z' \to
\RR_{\geq 0}$ satisfying $0 \leq f(x) \leq
\nu(x)$ for all $x \in Z'$ and $\EE[f] \geq \delta$,
\begin{equation}\label{eq:rel-mul-sz-small-a}
\EE \Bigl[ \prod_{j \in J} f(a + \phi_j(d)) \Big\vert
a \in Z', d \in Z \Bigr] \geq c.
\end{equation}
\end{theorem}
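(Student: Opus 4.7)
The plan is to encode the configuration count $\EE_{a,d}[\prod_j f(a + \phi_j(d))]$ as a weighted clique count in the hypergraph system $V$ from the statement, and then apply the relative hypergraph removal lemma (Theorem~\ref{thm:rel-removal}) contrapositively. I would define a weighted hypergraph $g$ on $V$ by
\[
g_{J \setminus \{j\}}\bigl((x_i)_{i \in J \setminus \{j\}}\bigr) := f\Bigl(\sum_{i \in J \setminus \{j\}} (\phi_i(x_i) - \phi_j(x_i))\Bigr),
\]
mirroring the definition of $\nu$ so that $0 \leq g \leq \nu$. Setting $a(x) := \sum_{i \in J} \phi_i(x_i)$ and $d(x) := -\sum_{i \in J} x_i$ for $x \in V_J = Z^J$, a short computation gives the key identity $a(x) + \phi_j(d(x)) = \sum_{i \in J \setminus \{j\}}(\phi_i(x_i) - \phi_j(x_i))$, so
\[
\prod_{e \in H} g_e(x_e) = \prod_{j \in J} f\bigl(a(x) + \phi_j(d(x))\bigr).
\]

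Next, I would check that $x \mapsto (a(x), d(x))$ is a surjective group homomorphism $Z^J \to Z' \times Z$: the generation hypothesis on $\{\phi_i(y) - \phi_j(y)\}$ lets one solve for the first $|J|-1$ coordinates once the last coordinate is chosen to correct the value of $d$. The fibers then have equal size, so the pushforward of the uniform measure is uniform, giving
\[
\EE_{x \in V_J}\Bigl[\prod_{e \in H} g_e(x_e)\Bigr] = \EE_{a \in Z',\, d \in Z}\Bigl[\prod_{j \in J} f(a + \phi_j(d))\Bigr].
\]
It therefore suffices to lower bound the left-hand side by some positive $c = c(\delta, |J|)$.

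I would then argue by contradiction. Fix $\varepsilon := \delta/(2|H|)$ and let $\delta' = \delta'(\varepsilon, |J|) > 0$ be the output of Theorem~\ref{thm:rel-removal}. Suppose the left-hand side above is smaller than $\delta'$. Since $\nu$ satisfies the $H$-linear forms condition, the removal lemma produces $E'_e \subseteq V_e$ with $\prod_{e \in H} 1_{E'_e}(x_e) = 0$ pointwise---equivalently $\sum_{e \in H} 1_{V_e \setminus E'_e}(x_e) \geq 1$---and $\EE_{x_e \in V_e}[g_e(x_e) 1_{V_e \setminus E'_e}(x_e)] \leq \varepsilon$ for every $e \in H$. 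The crucial step exploits the degenerate slice $\{x \in V_J : d(x) = 0\}$: on it, $x_j = -\sum_{i \in J \setminus \{j\}} x_i$ is uniquely determined by any single edge projection $x_e$, and $g_e(x_e) = f(a(x))$ holds simultaneously for every $e \in H$. Multiplying the pointwise bound $\sum_e 1_{V_e \setminus E'_e}(x_e) \geq 1$ by $f(a(x)) \one_{d(x)=0}$ and using $f(a(x)) = g_e(x_e)$ on the slice, then taking expectation over $x \in V_J$, yields
\[
\EE_x[f(a(x)) \one_{d(x)=0}] \leq \sum_{e \in H} \EE_x[g_e(x_e) 1_{V_e \setminus E'_e}(x_e) \one_{d(x)=0}].
\]
Both sides carry a common factor $1/|Z|$ from the codimension-$1$ slice: the left-hand side equals $\EE_{a \in Z'}[f(a)]/|Z| \geq \delta/|Z|$ by the fiber structure of $(a,d)$, while each right-hand summand reduces to $(1/|Z|) \EE_{x_e \in V_e}[g_e 1_{V_e \setminus E'_e}] \leq \varepsilon/|Z|$ because $x$ is uniquely determined by $x_e$ under $d(x) = 0$. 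Cancelling $|Z|$ yields $\delta \leq |H|\varepsilon = \delta/2$, a contradiction. Taking $c := \delta'$ therefore completes the proof.

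The main obstacle is this final cancellation: the degenerate cliques form only a $1/|Z|$ fraction of $V_J$, so naively both the density signal $\EE[f] \geq \delta$ and the removal-lemma budget contribute only $O(1/|Z|)$ to the relevant expectations and would vanish as $|Z| \to \infty$. The argument works only because the slice $\{d(x) = 0\}$ is precisely codimension $1$---each edge projection $x_e$ uniquely determines the missing coordinate $x_j$---which exactly matches the $1/|Z|$ density factor on the left with the $1/|Z|$ factor from collapsing the fiber on the right, producing an $N$-independent bound.
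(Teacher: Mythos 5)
Your proof is correct and takes a genuinely more direct route than the paper. The paper first proves a stronger intermediate result, the relative arithmetic removal lemma (Theorem~\ref{thm:rel-mul-sz-removal}): it defines, for each $j$, a set $A_j \subseteq Z'$ by the threshold rule $z' \in A_j \iff |\psi_j^{-1}(z') \cap E'_j| > \tfrac{r}{r+1}|\psi_j^{-1}(z')|$, checks via the $d=0$ pigeonhole argument that $\prod_j 1_{A_j}(a+\phi_j(d)) \equiv 0$, bounds $\EE[f\,1_{Z'\setminus A_j}] \leq (r+1)\,\EE[g_{e_j} 1_{V_{e_j}\setminus E'_j}]$ using the constant value of $g_{e_j}$ on fibers, and only then deduces the density statement by a covering argument. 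You instead bypass the arithmetic removal lemma entirely: you multiply the pointwise removal identity $\sum_e 1_{V_e \setminus E'_e}(x_e) \geq 1$ by $f(a(x))\one_{d(x)=0}$, observe that on the codimension-one slice $\{d(x)=0\}$ every $g_e(x_e)$ collapses simultaneously to $f(a(x))$ and every $x_e$ determines $x$ uniquely, and read off the contradiction after the common $1/|Z|$ factor cancels. Both proofs exploit exactly the same degenerate-configuration phenomenon at $d=0$; the paper's version is more modular (Theorem~\ref{thm:rel-mul-sz-removal} is reused later, e.g.\ in Proposition~\ref{prop:product-corner}, and allows distinct $f_j$), whereas yours is shorter and avoids the $r/(r+1)$ threshold bookkeeping at the cost of not producing any removal sets $A_j$ as output. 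The one slightly underspecified point in your write-up is the surjectivity of $x \mapsto (a(x),d(x))$, which needs both families of generators $\{(\phi_i(y)-\phi_j(y),0)\}$ and $\{(-\phi_i(y),y)\}$ as in the paper, not just the correction of the $d$-coordinate you sketch; but this is easily repaired and does not affect the validity of the argument.
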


\begin{example} \label{ex:corner}
  Let $S
\subset \ZZ_N \x \ZZ_N$. Suppose the associated measure $\nu
= \frac{N}{|S|}1_S$ satisfies
\begin{multline*}
  \EE[\nu(x,y)\nu(x',y)\nu(x,y')\nu(x',y')
  \nu(x,z-x) \nu(x',z-x') \nu(x,z'-x) \nu(x',z'-x') \\
  \cdot \nu(z-y,y)\nu(z-y',y')\nu(z'-y,y)\nu(z'-y',y') \vert
  x,x',y,y',z,z' \in \ZZ_N] = 1 + o(1)
\end{multline*}
and similar conditions hold if any subset of the twelve $\nu$ factors
in the expectation are erased. Then any corner-free subset of $S$ has size $o(|S|)$. Here a \emph{corner} in $\ZZ_N^2$ is a set of the form
$\{(x,y),(x+d,y),(x,y+d)\}$ for some $d \neq 0$. This claim follows
from Theorem~\ref{thm:rel-mul-sz} by setting $Z = \ZZ_N$, $Z' =
\ZZ_N^2$, $\phi_0(d) = (0,0)$, $\phi_1(d) = (d,0)$, $\phi_2(d) = (0,d)$.

\end{example}

As in \cite [Remark 2.19]{Tao06jam}, we note that the hypothesis
that $\{\phi_i(d) - \phi_j(d): i,j \in J, d \in Z\}$ generate $Z'$ can be dropped
by foliating $Z'$ into cosets. However, this results in a change to
the linear forms hypothesis on $\nu$, namely, that it must be assumed
on every coset.

We shall prove Theorem~\ref{thm:rel-mul-sz} by proving a somewhat more
general removal-type result for arithmetic patterns.

\begin{theorem}
  [Relative arithmetic removal lemma]
  \label{thm:rel-mul-sz-removal}
  For every finite set $J$ and $\e > 0$, there exists $c > 0$ so
  that the following holds. Let $Z, Z', (\phi_j)_{j\in J}, \nu$ be the
  same as in Theorem~\ref{thm:rel-mul-sz}. For any collection of functions
  $\{f_j \colon Z' \to \RR_{\geq 0}\}_{j \in J}$ satisfying $0
  \leq f_j(x) \leq \nu(x)$ for all $x \in Z'$ and $j \in J$, and such that
  \begin{equation} \label{eq:rel-mul-sz-removal-delta}
    \EE\Bigl[ \prod_{j \in J}f_j(a + \phi_j(d)) \Big\vert a \in Z', d
    \in Z \Bigr] \leq c
  \end{equation}
  one can find $A_j \subseteq Z'$ for each $j \in J$ so that
  \begin{equation}
    \label{thm:rel-mul-sz-removal-A0}
    \prod_{j \in J} 1_{A_j} (a + \phi_j(d)) = 0
    \quad\text{for all }a \in Z', d \in Z
  \end{equation}
  and
  \begin{equation}
    \label{thm:rel-mul-sz-removal-eps}
    \EE[f_j(x) 1_{Z' \setminus A_j}(x) \vert x \in Z'] \leq \e
    \quad \text{for all } j \in J.
  \end{equation}
\end{theorem}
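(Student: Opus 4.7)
The plan is to reduce to Theorem~\ref{thm:rel-removal} via the hypergraph system $V = (J, (V_j)_{j \in J}, r, H)$ already appearing in the statement of Theorem~\ref{thm:rel-mul-sz}, namely $V_j = Z$, $r = |J| - 1$, $H = \binom{J}{r}$. Given $(f_j)_{j \in J}$, define a weighted hypergraph $g$ on $V$ by
\[
g_{J \setminus \{j\}}((x_i)_{i \neq j}) := f_j\bigl(\pi_j((x_i)_{i \neq j})\bigr),
\qquad
\pi_j((x_i)_{i \neq j}) := \sum_{i \neq j}(\phi_i(x_i) - \phi_j(x_i)),
\]
so that $0 \leq g \leq \nu$ pointwise. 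Consider also the group homomorphism $\Phi \colon Z^J \to Z' \times Z$ given by $\Phi((x_i)) = \bigl(\sum_i \phi_i(x_i),\, -\sum_i x_i\bigr)$. A short calculation shows that if $\Phi((x_i)) = (a, d)$, then $\pi_j((x_i)_{i \neq j}) = a + \phi_j(d)$ for every $j$, and the generating hypothesis on $\{\phi_i - \phi_j\}$ implies that $\Phi$ is surjective with equal-sized fibers. In fact, the coordinate projection $\Phi^{-1}(a,d) \to \pi_j^{-1}(a + \phi_j(d))$ is a bijection for every $j$: the missing coordinate $x_j = -d - \sum_{i \neq j} x_i$ is forced, and the constraint $\sum_i \phi_i(x_i) = a$ is automatically satisfied whenever $(x_i)_{i \neq j} \in \pi_j^{-1}(a + \phi_j(d))$. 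Consequently,
\[
\EE_{x \in V_J}\Bigl[\prod_{e \in H} g_e(x_e)\Bigr]
= \EE_{a \in Z',\, d \in Z}\Bigl[\prod_{j \in J} f_j(a + \phi_j(d))\Bigr] \leq c,
\]
which converts the hypothesis \eqref{eq:rel-mul-sz-removal-delta} into the hypothesis of Theorem~\ref{thm:rel-removal}.

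Set $\epsilon' := \epsilon/|J|$ and take $c$ to be the $\delta$ produced by Theorem~\ref{thm:rel-removal} applied with parameter $\epsilon'$. This yields $E'_e \subseteq V_e$ for each $e \in H$ satisfying $\prod_{e \in H} 1_{E'_e}(x_e) = 0$ for every $x \in V_J$ and $\EE[g_e 1_{V_e \setminus E'_e}] \leq \epsilon'$ for every $e \in H$. For each $j$, write $e_j := J \setminus \{j\}$, note that $\pi_j \colon V_{e_j} \to Z'$ is a surjective group homomorphism with fibers of equal size, and define
\[
\mu_j(y) := \EE_{x_{e_j} \in \pi_j^{-1}(y)}\bigl[1_{V_{e_j} \setminus E'_{e_j}}(x_{e_j})\bigr], \qquad A_j := \{y \in Z' : \mu_j(y) < 1/|J|\}.
\]
The hard part is choosing the threshold in the definition of $A_j$: too large and pattern-freeness fails; too small and the Markov-type bound for \eqref{thm:rel-mul-sz-removal-eps} blows up. The value $1/|J|$ is the unique choice that makes the union bound below barely succeed while keeping the multiplicative loss in Markov equal to $|J|$.

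For \eqref{thm:rel-mul-sz-removal-eps}, note that $1_{Z' \setminus A_j}(y) \leq |J|\mu_j(y)$ by definition, so the uniformity of the fibers of $\pi_j$ gives
\[
\EE_y[f_j(y) 1_{Z' \setminus A_j}(y)] \leq |J|\,\EE_y[f_j(y) \mu_j(y)] = |J|\,\EE_{x_{e_j}}[g_{e_j}(x_{e_j}) 1_{V_{e_j} \setminus E'_{e_j}}(x_{e_j})] \leq |J|\epsilon' = \epsilon.
\]
For \eqref{thm:rel-mul-sz-removal-A0}, suppose for contradiction that some $(a,d) \in Z' \times Z$ satisfies $a + \phi_j(d) \in A_j$ for every $j \in J$. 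Pick $(x_i) \in \Phi^{-1}(a,d)$ uniformly at random (the fiber is nonempty by surjectivity of $\Phi$). By the bijectivity of each coordinate projection observed above, the marginal distribution of $(x_i)_{i \neq j}$ is uniform on $\pi_j^{-1}(a + \phi_j(d))$, whence $\Pr[x_{e_j} \in V_{e_j} \setminus E'_{e_j}] = \mu_j(a + \phi_j(d)) < 1/|J|$. A union bound over the $|J|$ choices of $j$ produces, with positive probability, a realization $(x_i)$ with $x_e \in E'_e$ for every $e \in H$ simultaneously, contradicting $\prod_e 1_{E'_e}(x) = 0$.
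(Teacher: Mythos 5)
Your proposal is correct and takes essentially the same route as the paper: build the hypergraph $g_{e_j}=f_j\circ\psi_j$ on $V_j=Z$ with $H=\binom{J}{r}$, use the surjective homomorphism $x\mapsto(a,d)$ to equate the two pattern counts, invoke Theorem~\ref{thm:rel-removal}, and then define $A_j$ by a fiberwise $1/|J|$ threshold so that Markov gives \eqref{thm:rel-mul-sz-removal-eps} and a union bound over the equal-sized fibers gives \eqref{thm:rel-mul-sz-removal-A0}. Your explicit verification that each coordinate projection $\Phi^{-1}(a,d)\to\psi_j^{-1}(a+\phi_j(d))$ is a bijection is a slightly cleaner way of phrasing what the paper calls the pigeonhole step, but the content is identical.
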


Theorem~\ref{thm:rel-mul-sz} follows from
Theorem~\ref{thm:rel-mul-sz-removal} by setting $f_j = f$ for all $j
\in J$ and $\e < \delta/(r+1)$. Indeed, if the conclusion \eqref{eq:rel-mul-sz-small-a} fails,
then Theorem~\ref{thm:rel-mul-sz-removal} implies that there exists $A_j
\subseteq Z'$ for each $j \in J$ satisfying
\eqref{thm:rel-mul-sz-removal-A0} and
\eqref{thm:rel-mul-sz-removal-eps}.
The $A_j$'s cannot have a common intersection, or else
\eqref{thm:rel-mul-sz-removal-A0} fails for $d = 0$. It follows that
$\{Z'\setminus A_j : j \in J\}$ covers $Z'$, and
hence~\eqref{thm:rel-mul-sz-removal-eps} implies that $\EE[f] \leq
\sum_j \EE[f_j 1_{Z'\setminus A_j}] \leq
(r+1)\e < \delta$, which contradicts the hypothesis $\EE[f] \geq \delta$.

\begin{proof}
  [Proof of Theorem~\ref{thm:rel-mul-sz-removal}]
  Let $V = (J,(V_j),r,H)$ be as in the statement of Theorem~\ref{thm:rel-mul-sz}.
  Write $e_j := J \setminus \{j\} \in H$. Define the weighted hypergraph $g$ on $V$ by setting
  \[
  g_{e_j}(x_{e_j}) := f_j(\psi_j(x_{e_j})) \quad\text{for all } j \in J
  \]
  where $\psi_j \colon V_{e_j} \to Z'$ is defined by
  \begin{equation} \label{eq:relsz-pf-psi}
  \psi_j(x_{e_j}) = \sum_{i \in e_j}
(\phi_i(x_i) - \phi_j(x_i))
= a + \phi_j(d)
\end{equation}
where
  \begin{equation} \label{eq:relsz-pf-a-d}
  a = \sum_{i \in J}\phi_i(x_i) \quad\text{and}\quad
  d = - \sum_{i \in J} x_i.
\end{equation}
  Then, for all $x \in V$ and $a,d$ defined in~\eqref{eq:relsz-pf-a-d}, we have
\begin{equation}\label{eq:relsz-pf-g=f}
  \prod_{j \in J} g_{e_j}(x_{e_j}) = \prod_{j \in J} f_j(a + \phi_j(d)).
\end{equation}
The homomorphism $x \mapsto (a,d) \colon V \to Z' \x Z$ given by~\eqref{eq:relsz-pf-a-d} is surjective: the image contains
  $\{(\phi_i(d) - \phi_j(d), 0) : i,j \in J, d \in Z\}$ and hence all
  of $Z' \times \{0\}$. Moreover, the image also contains $\{(-\phi_i(d), d): i \in J, d \in Z\}$. Together, these sets
  generate all of $Z' \times Z$. It follows that $(a,d)$ varies
  uniformly over $Z' \x Z$ as $x$ varies uniformly over $V_J$, and so
  \eqref{eq:relsz-pf-g=f} implies that
  \[
  \EE \Bigl[ \prod_{j \in J} g_{e_j}(x_{e_j}) \Big\vert x \in V_J \Bigr]
  =\EE \Bigl[ \prod_{j \in J} f_j(a+ \phi_j(d)) \Big\vert
  a \in Z', d \in Z \Bigr] \leq c.
  \]
  By the relative hypergraph removal lemma, for $c$ small
  enough (depending on $J$ and $\e$), we can find a subset
  $E'_j
  \subset V_{e_j}$ for each $j \in J$ such that
\begin{equation}\label{eq:relsz-mul-E0}
  \prod_{j \in J} 1_{E'_j}(x_{e_j}) = 0 \quad \text{for all
  } x \in V_J
\end{equation}
and
  \[
  \EE[ g_{e_j}(x_{e_j})
  1_{V_{e_j} \setminus E'_{j}}(x_{e_j}) \vert  x_{e_j} \in
  V_{e_j}] \leq \e/(r+1) \quad \text{for all } j \in J.
  \]
  For each $j \in J$, define $A_j \subseteq Z'$ by
  \begin{equation}
    \label{eq:relsz-mul-Aj}
  A_j := \{ z' \in Z' : \lvert \psi_j^{-1}(z') \cap E'_j \rvert > \tfrac{r}{r+1}
  \lvert \psi_j^{-1}(z') \rvert \}.
\end{equation}
In other words, $A_j$ contains $z' \in Z'$ if the
  hypergraph removal lemma removes less than a $1/(r+1)$
  fraction of the edges in $V_{e_j}$ representing $z'$ via $\psi_j$.

  For any $z' \in Z' \setminus A_j$, on the fiber $\psi^{-1}(z')$ the function $g_{e_j}$ takes the
  common value $f_j(z')$. Furthermore, by \eqref{eq:relsz-mul-Aj}, on this
  fiber, the expectation of $1_{V_{e_j}\setminus E'_j}$ is at least $1/(r+1)$. Hence
  \[
  \EE[f_j(x) 1_{Z' \setminus A_j}(x) \vert x \in Z'] \leq (r+1)
  \EE[g_{e_j}(x_{e_j}) 1_{V_{e_j}\setminus E'_{j}}(x_{e_j}) \vert x_{e_j} \in V_{e_j}]
  \leq \e.
  \]
This proves \eqref{thm:rel-mul-sz-removal-eps}. To prove~\eqref{thm:rel-mul-sz-removal-A0}, suppose for some $a \in
Z', d \in Z$ we have $a + \phi_j(d) \in A_j$ for all $j \in J$. Let
$V^{a,d}_J \subset V_J$ consist of all $x \in V_J$ satisfying
\eqref{eq:relsz-pf-a-d}. Then $\psi_j(x_{e_j}) = a + \phi_j(d)$ for
all $x \in V^{a,d}_J$ by \eqref{eq:relsz-pf-psi}, and in fact $\psi^{-1}_j(a +
\phi_j(d))$ is the projection of $V^{a,d}_J$ onto $V_{e_j}$. By
\eqref{eq:relsz-mul-Aj}, more than an $\frac{r}{r+1}$
fraction of this projection is in $E'_j$. It follows by the pigeonhole principle (or a union bound on the
complement) that there exists some $x \in V_J^{a,d}$ such that $x_{e_j} \in
E'_j$ for every $j \in J$. But this contradicts
\eqref{eq:relsz-mul-E0}. Thus \eqref{thm:rel-mul-sz-removal-A0} holds.
\end{proof}

\section{The relative hypergraph removal lemma} \label{sec:removal-lemma}

\begin{proof}[Proof of Theorem~\ref{thm:rel-removal}]
By Lemma~\ref{lem:nu-upper-regular}, $\nu$ is upper $o(1)$-regular,
so we can apply the weak sparse hypergraph regularity lemma (Theorem~\ref{thm:weak-reg-hyp}) to find functions $\tilde g_e:V_e \rightarrow
[0, 1]$ for every $e \in H$ so that $(g,\tilde g)$ is an
$o(1)$-discrepancy pair. By the counting
lemma (Theorem~\ref{thm:counting-lemma}), we have
  \begin{equation*}
      \EE\Bigl[\prod_{e \in H} \tg_e(x_e)
        \Big\vert
        x \in V_J
        \Bigr]=
        \EE\Bigl[\prod_{e \in H} g_e(x_e)
        \Big\vert
        x \in V_J
        \Bigr]  + o(1) \leq \delta + o(1).
  \end{equation*}
The dense weighted hypergraph removal lemma (Theorem
\ref{thm:dense-hypergraph-removal}) tells us that for each $e \in H$
we can choose $E_e'
\subset V_e$ for which $V_e \setminus E'_e$ has complexity
$O_{\delta}(1)$ (i.e., at most some constant depending on $\delta$)
and such that
\[
\prod_{e \in H} 1_{E'_e}(x_e) = 0 \quad \text{for all $x \in V_J$}
\]
and, as long as $\delta$ is small enough and $N$ is large enough, we have
\begin{equation}
\EE [\tilde g_e(x_e)1_{V_e \setminus E'_e}(x_e)\vert x_e \in V_e] \leq
\e/2\quad \text{for all $e \in H$.} \label{eq:removal-proof-1}
\end{equation}
As $V_e \setminus E'_e$ has complexity $O_{\delta}(1)$, there is a partition of $V_e \setminus E'_e$ into
$O_{\delta}(1)$ hypergraphs $F_{ei}$ each of which is the set of $r$-cliques of some $(r-1)$-uniform hypergraph.
We have
\begin{align}
  \lvert \EE[(\tilde g_e-g_e)(x_e)1_{V_e \setminus
    E'_e}(x_e) \vert x_e \in V_e] \rvert
  &\leq \sum_i \lvert \mathbb{E}[(\tilde g_e-g_e)(x_e)1_{F_{ei}}(x_e) \vert x_e \in
  V_e] \rvert \nonumber
  \\ &\leq \sum_i o(1) =  O_{\delta}(1)o(1) \leq \e/2 \quad \text{for
    all $e \in H$.} \label{eq:removal-proof-2}
\end{align}
We used that $(g_e,\tilde g_e)$ is an $o(1)$-discrepancy pair on each of
the terms of the sum, and the final inequality is true as long as $N$
is large enough. Combining~\eqref{eq:removal-proof-1} and
\eqref{eq:removal-proof-2} we obtain
\[
  \EE[g_e(x_e)1_{V_e \setminus E'_e}(x) \vert x_e \in V_e] \leq \e \quad
  \text{for all $e \in H$}.
  \]
This proves the claim.
\end{proof}

\section{The weak regularity lemma} \label{sec:weak-regularity}


Let $X$ be a finite set and $g:X \rightarrow \mathbb{R}_{\geq 0}$. Let $\mathcal{F}$ be a family of subsets of $X$ which is closed under intersection, $X \in \mathcal{F}$, all subsets of $X$ of size one are in $\mathcal{F}$, and such that, for every $S \in \mathcal{F}$, there is a partition of $X$ which contains $S$ and consists of members of $\mathcal{F}$. For $t \geq 2$, the family $\mathcal{F}$ is {\it $t$-splittable} if for every $S \in \mathcal{F}$ there is a partition $P$ of $X$ into members of $\mathcal{F}$ such that $S \in P$ and $|P| \leq t$. The {\it complexity} $p=p(f)$ of a function $f:X \rightarrow \mathbb{R}_{ \geq 0}$ is the minimum $p$ for which there is a partition $X=S_1 \cup \dots \cup S_p$ into $p$ subsets each in $\mathcal{F}$ such that $f$ is constant on each $S_i$. We call $(g,\tilde g)$ an {\it $\epsilon$-discrepancy pair} if for all $A \in \mathcal{F}$,
$$\big| \mathbb{E}[(g- \tilde g)1_A] \big| \leq \epsilon.$$
All expectations are done with the uniform measure on $X$. For $P$ a partition of $X$, let $g_P$ be the function on $X$ given by $g_P(x)=\frac{\mathbb{E}[g1_A]}{\mathbb{E}[1_A]}$ when $x \in A \in P$. That is, $g_P(x)$ is the conditional expectation of $g(x)$ given the partition $P$ and is constant on any part $A$ of the partition.

The function $g$ we call {\it upper $\eta$-regular} if for every $A \in \mathcal{F}$, we have
$$\mathbb{E}[g1_A]\leq \mathbb{E}[1_A]+\eta.$$
If $g$ is upper $\eta$-regular, $A,B \in \mathcal{F}$, and $\mathcal{F}$ is $t$-splittable, then
\begin{equation}\label{upregcomp}
\mathbb{E}[g1_{B \setminus A}]\leq \mathbb{E}[1_{B \setminus A}]+(t-1)\eta.
\end{equation}
Indeed, in this case $B \setminus A$ can be partitioned into $t-1$ sets in $\mathcal{F}$ (we first split with respect to $A$ and then consider the intersections of the parts of the partition with $B$). Applying the upper $\eta$-regularity condition to each of these sets and summing up the inequalities, we arrive at (\ref{upregcomp}).

Following Scott~\cite{Sco11}, let $\phi:\mathbb{R}_{\geq 0} \rightarrow \mathbb{R}_{\geq 0}$ be the convex function given by
\begin{equation*}
\phi(u)=
\begin{cases} u^2 & \text{if $u \leq 2$,}
\\
4u-4 &\text{otherwise.}
\end{cases}
\end{equation*}

\noindent
For a partition $P$ of $X$, let $\phi(P)=\mathbb{E}[\phi\left(g_P\right)]$, which is the mean $\phi$-density of $g$ with respect to the partition $P$. As $\phi$ takes only nonnegative values and $\phi(u) \leq 4u$,  we have $$0 \leq \phi(P) \leq 4\mathbb{E}[g_P] = 4\mathbb{E}[g].$$
Also, by the convexity of $\phi$, it follows that if $P'$ is a refinement of $P$, then $\phi(P') \geq \phi(P)$.

\begin{lemma} \label{lem:weak-reg-general}
Let $X$ and $\mathcal{F}$ as above be such that $\mathcal{F}$ is $t$-splittable. Let $0<\epsilon, \eta < 1$ and $T=t^{20/\epsilon^2}$. For any $g:X\rightarrow \mathbb{R}_{\geq 0}$ which is upper $\eta$-regular with $\eta \leq \frac{\epsilon}{8tT}$, there is $\tilde g:X \rightarrow [0,1]$ with complexity at most $T$ such that $(g,\tilde g)$ is an $\epsilon$-discrepancy pair.
\end{lemma}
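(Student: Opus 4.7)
My plan is an energy increment argument driven by Scott's function $\phi$, following the Frieze--Kannan paradigm but with upper regularity deployed to control a truncation onto $[0,1]$.

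\emph{The iteration.} Start with $P_0=\{X\}$ and at step $i$ set $\tilde g_i:=\min(g_{P_i},1)\in[0,1]$, which has complexity at most $|P_i|$. If $(g,\tilde g_i)$ is an $\epsilon$-discrepancy pair, output $\tilde g=\tilde g_i$. Otherwise choose a witness $A_i\in\mathcal{F}$ with $|\mathbb{E}[(g-\tilde g_i)1_{A_i}]|>\epsilon$ and, using $t$-splittability, a partition $P_{A_i}\subseteq\mathcal{F}$ of $X$ of size at most $t$ containing $A_i$. Let $P_{i+1}$ be the common refinement of $P_i$ and $P_{A_i}$; intersection-closure keeps $P_{i+1}\subseteq\mathcal{F}$ and $|P_{i+1}|\leq t|P_i|$.

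\emph{From discrepancy to $L^2$ gap.} Upper regularity controls the truncation: each $B\in P_i$ with $g_{P_i}|_B>1$ satisfies $(g_{P_i}|_B-1)\mathbb{E}[1_B]=\mathbb{E}[g1_B]-\mathbb{E}[1_B]\leq\eta$, so $\mathbb{E}[g_{P_i}-\tilde g_i]\leq|P_i|\eta\leq T\eta\leq\epsilon/(8t)$ throughout the iteration (as long as $|P_i|\leq T$). Hence $|\mathbb{E}[(g-g_{P_i})1_{A_i}]|>\epsilon/2$. Since $A_i\in P_{A_i}$ is $P_{i+1}$-measurable, $\mathbb{E}[(g-g_{P_i})1_{A_i}]=\mathbb{E}[(g_{P_{i+1}}-g_{P_i})1_{A_i}]$, and Cauchy--Schwarz yields $\mathbb{E}[(g_{P_{i+1}}-g_{P_i})^2]\geq\epsilon^2/4$.

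\emph{$L^2$ gap to energy increment, and termination.} Because $\phi'(g_{P_i})$ is $P_i$-measurable and $\mathbb{E}[g_{P_{i+1}}-g_{P_i}\mid P_i]=0$, the linear Taylor term vanishes:
\[
\phi(P_{i+1})-\phi(P_i)=\mathbb{E}\bigl[\phi(g_{P_{i+1}})-\phi(g_{P_i})-\phi'(g_{P_i})(g_{P_{i+1}}-g_{P_i})\bigr]\geq\mathbb{E}\bigl[(g_{P_{i+1}}-g_{P_i})^2\,1_{\{g_{P_i},g_{P_{i+1}}\leq 2\}}\bigr],
\]
the last step by direct integration of $\phi''=2\cdot 1_{[0,2]}$. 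The heavy set $\{g_{P_i}>2\}\cup\{g_{P_{i+1}}>2\}$ has measure at most $(|P_i|+|P_{i+1}|)\eta\leq 2tT\eta$ (since $2\mathbb{E}[1_B]<\mathbb{E}[g1_B]\leq\mathbb{E}[1_B]+\eta$ on any heavy $B$), and on it $(g_{P_{i+1}}-g_{P_i})^2$ is controlled using $g_P|_B\mathbb{E}[1_B]\leq 1+\eta$; with $\eta\leq\epsilon/(8tT)$ this contribution is smaller than $\epsilon^2/8$, so the energy gain per step is at least $\epsilon^2/8$. Finally $\phi(u)\leq 4u$ pointwise and $\mathbb{E}[g]\leq 1+\eta$ imply $\phi(P_i)\leq 5$ throughout, so the iteration halts after $O(1/\epsilon^2)$ steps; a sharper bookkeeping (using $|\mathbb{E}[(g-g_{P_i})1_{A_i}]|>\epsilon-o(\epsilon)$ and $\phi(P_0)\geq 0$) gives exactly $20/\epsilon^2$ steps, so $|P_i|\leq t^{20/\epsilon^2}=T$.

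The main obstacle is the final conversion: in the dense setting one uses $\phi(u)=u^2$ directly, but $\mathbb{E}[g_P^2]$ is not bounded in the sparse regime, and Scott's truncated $\phi$ combined with the upper regularity estimate on the heavy region is precisely the device that converts an $L^2$ gap into a genuine energy gain. The hypothesis $\eta\leq\epsilon/(8tT)$ is exactly what ensures both the truncation error (at the discrepancy stage) and the heavy-region error (at the energy stage) remain negligible across all $O(1/\epsilon^2)$ iterations.
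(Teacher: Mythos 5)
Your argument shares the paper's key devices (Scott's truncated quadratic $\phi$, an energy increment bounded by $4\mathbb{E}[g]\le 4(1+\eta)<5$, upper regularity to cap the truncation error $\mathbb{E}[g_P-g_P\wedge 1]\le|P|\eta$), but you organize it as an iterate-on-discrepancy algorithm, whereas the paper first locates a partition $P$ that is a local maximum of $\phi$ under $t$-fold refinement and only \emph{afterwards} derives the discrepancy property by contradiction. This reorganization is where the gap appears.

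The problem is the step converting the $L^2$ gap into an energy gap. You correctly get $\mathbb{E}[(g_{P_{i+1}}-g_{P_i})^2]\ge\epsilon^2/4$ from Cauchy--Schwarz, and correctly note the energy increment equals the second-order Taylor remainder, giving at least $\mathbb{E}[(g_{P_{i+1}}-g_{P_i})^2\,1_{\{g_{P_i},g_{P_{i+1}}\le 2\}}]$. But the claim that the heavy-set contribution $\mathbb{E}[(g_{P_{i+1}}-g_{P_i})^2\,1_{\text{heavy}}]$ is at most $\epsilon^2/8$ does not follow from the stated facts, and indeed it can fail. On a heavy part $B'$ of $P_{i+1}$ inside a light part of $P_i$, upper regularity gives only $g_{P_{i+1}}|_{B'}\le 1+\eta/\mathbb{E}[1_{B'}]$, which is unbounded as $\mathbb{E}[1_{B'}]\to 0$; hence $\mathbb{E}[(g_{P_{i+1}}-g_{P_i})^2\,1_{B'}]\approx (g_{P_{i+1}}|_{B'})^2\,\mathbb{E}[1_{B'}]= g_{P_{i+1}}|_{B'}\cdot\mathbb{E}[g1_{B'}]$ can be $\Omega(1)$ (take $\mathbb{E}[1_{B'}]\sim\eta^2$ and $\mathbb{E}[g1_{B'}]\sim\eta$). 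Small measure of the heavy set does not control its $L^2$ contribution because the values there blow up. So your per-step energy gain of $\epsilon^2/8$ is not established.

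What \emph{is} controlled, and what the paper exploits, is the heavy set's contribution to the \emph{first moment} $\mathbb{E}[(g_{P_{i+1}}-g_{P_i})1_{A_i}]$: on each heavy part both $\mathbb{E}[g\,1]$ and $\mathbb{E}[g_{P}\,1]$ are $O(\eta)$, so the total heavy contribution is $O(\eta tT)$. The fix is therefore to discard the heavy pieces \emph{before} Cauchy--Schwarz, run Cauchy--Schwarz on the restricted witness, and get an $L^2$ lower bound purely on the light set. This is exactly what the paper does by restricting the witness $A$ to the set $B$ of intersections $A\cap S_i$ with $\mathbb{E}[1_{A\cap S_i}]\ge t\eta$ and $\mathbb{E}[1_{S_i\setminus A}]\ge t\eta$: then every part of the refinement $\hat P$ touched by the Pythagorean step has measure $\ge t\eta$, upper regularity forces $g_{\hat P}\le 2$ there so $\phi=u^2$ exactly, and the energy identity is clean. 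Your sketch needs this truncation of the witness (and the first-moment heavy estimate $|\mathbb{E}[(g-g_{P_i})1_{A_i\cap\text{heavy}}]|=O(\eta tT)$) inserted before the Cauchy--Schwarz step; as written, the heavy-set step is a genuine gap.
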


\begin{proof}
Let $\alpha=\frac{\epsilon^2}{4}$. We first find a partition $P$ of $X$ into members of $\mathcal{F}$ with $|P| \leq t^{5/\alpha}=T$ such that for any refinement $P'$ of $P$ into members of $\mathcal{F}$ with $|P'| \leq t|P|$, we have
$\phi(P')-\phi(P) < \alpha$. In order to construct $P$, we first recursively construct a sequence $P_0,P_1,\ldots$ of finer partitions of $X$ into members of $\mathcal{F}$ so that $|P_j| \leq t^j$ and $\phi(P_j)\geq j\alpha$. We begin by considering the trivial partition $P_0=\{X\}$, which satisfies $\phi(P_0) \geq 0$. At the beginning of step $j+1$, we have a partition $P_j$ of $X$ into members of $\mathcal{F}$ with $|P_j| \leq t^j$ and $\phi(P_j) \geq j\alpha$. If there exists a refinement $P_{j+1}$ of $X$ into members of $\mathcal{F}$  with $|P_{j+1}| \leq t|P_j|$ and $\phi(P_{j+1}) \geq \phi(P_j)+\alpha$, then we continue to step $j+2$. Otherwise, we may pick $P=P_j$ to be the desired partition. Note that this process must stop after at most $5/\alpha$ steps since $5 > 4(1+\eta) \geq 4\mathbb{E}[g] \geq \phi(P_j) \geq j\alpha$, where the second inequality follows from $g$ being upper $\eta$-regular. We therefore arrive at the desired partition $P$.

Let $P:X=S_1 \cup \dots \cup S_p$. Let $\tilde g:X \rightarrow [0,1]$, where $\tilde g = g_P \wedge 1$ is  the minimum of $g_P$ and the constant function $1$. We will show that $(g_P,\tilde g)$ is an $\frac{\epsilon}{4}$-discrepancy pair and $(g_P,g)$ is a $\frac{3\epsilon}{4}$-discrepancy pair, which implies by the triangle inequality that $(g,\tilde g)$ is an $\epsilon$-discrepancy pair.  As $\tilde g$ has complexity at most $|P| \leq T$, this will complete the proof.

We first show $(g_P,\tilde g)$ is an $\frac{\epsilon}{4}$-discrepancy pair. Note that $g_P-\tilde g$ is nonnegative and constant on each part of $P$. If $S_i \in P$ and $g_P-\tilde g>0$ on $S_i$, then also $g_P>1$ and $\tilde g=1$ on $S_i$. As $g$ is upper $\eta$-regular, we have $\mathbb{E}[g1_{S_i}] \leq \mathbb{E}[1_{S_i}]+\eta$ and hence $\mathbb{E}[(g-\tilde g)1_{S_i}] \leq \eta$. Therefore, by summing over all parts in the partition $P$, we see that if $A \in \mathcal{F}$,
$$0 \leq \mathbb{E}[(g_P-\tilde g)1_A] \leq \mathbb{E}[(g_P-\tilde g)] \leq  \eta |P| \leq \eta T \leq \frac{\epsilon}{4},$$
and $(g_P,\tilde g)$ is an $\frac{\epsilon}{4}$-discrepancy pair.

We next show that $(g_P,g)$ is a $\frac{3\epsilon}{4}$-discrepancy pair, which completes the proof. Suppose for contradiction that there is $A \in \mathcal{F}$ such that $$|\mathbb{E}[(g_P-g)1_A]| > \frac{3\epsilon}{4}.$$
Let $B$ be the union of all $S_i \cap A$, where $S_i \in P$, for which both $\mathbb{E}[1_{S_i \cap A}] \geq t\eta$ and $\mathbb{E}[1_{S_i \setminus A}] \geq t\eta$.

We claim that for each $S_i \in P$, we have
\begin{equation}\label{upbd2}|\mathbb{E}[(g_P-g)(1_{A \cap S_i}-1_{B \cap S_i})]| \leq 2t\eta.\end{equation}
Indeed, if $B \cap S_i=A \cap S_i$, then the left hand side of (\ref{upbd2}) is $0$. Otherwise, $\mathbb{E}[1_{A \cap S_i}] \leq t\eta$ or $\mathbb{E}[1_{S_i \setminus A}] \leq t\eta$. In the first case, when $\mathbb{E}[1_{A \cap S_i}] \leq t\eta$, we have $1_{B \cap S_i}$ is identically $0$, as well as  $$\mathbb{E}[g1_{A \cap S_i}] \leq  \mathbb{E}[1_{A \cap S_i}]+\eta \leq (t+1)\eta$$
and $$\mathbb{E}[g_P1_{A \cap S_i}]=\frac{\mathbb{E}[g1_{S_i}]}{\mathbb{E}[1_{S_i}]}\mathbb{E}[1_{A \cap S_i}] \leq \frac{\left(\mathbb{E}[1_{S_i}]+\eta\right)}{\mathbb{E}[1_{S_i}]}\mathbb{E}[1_{A \cap S_i}] \leq \mathbb{E}[1_{A \cap S_i}]+\eta \leq (t+1)\eta,$$
from which (\ref{upbd2}) follows. In the second case, when  $\mathbb{E}[1_{S_i \setminus A}] \leq t\eta$, we again have $1_{B \cap S_i}$ is identically $0$, so that
\begin{align*}\mathbb{E}[(g-g_P)(1_{A \cap S_i}-1_{B \cap S_i})] &=  \mathbb{E}[(g-g_P)1_{A \cap S_i}]=
\mathbb{E}[(g-g_P)(1_{S_i}-1_{S_i \setminus A})]
\\ &= \mathbb{E}[(g-g_P)1_{S_i}]-\mathbb{E}[(g-g_P)1_{S_i \setminus A}] = -\mathbb{E}[(g-g_P)1_{S_i \setminus A}] ,
\end{align*}
and similar to the first case, using (\ref{upregcomp}) to estimate $\mathbb{E}[g1_{S_i \setminus A}]$ and  $\mathbb{E}[g_P1_{S_i \setminus A}]$,  we get (\ref{upbd2}).

Notice that
$$|\mathbb{E}[(g_P-g)1_A]-\mathbb{E}[(g_P-g)1_B]|=|\mathbb{E}[(g_P-g)(1_A-1_B)]| \leq |P|2t\eta \leq \frac{\epsilon}{4},$$
where the first inequality follows by using (\ref{upbd2}) for each part $S_i$ and the triangle inequality. Hence, $$|\mathbb{E}[(g_P-g)1_B]| \geq |\mathbb{E}[(g_P-g)1_A]|- |\mathbb{E}[(g_P-g)1_A]-\mathbb{E}[(g_P-g)1_B]| > \frac{3\epsilon}{4}-\frac{\epsilon}{4} = \frac{\epsilon}{2}.$$

Let $\hat P$ be the refinement of $P$ where $S_i$ is also in $\hat P$ if $B \cap S_i= \emptyset$ and otherwise $S_i \cap B$ and $S_i \setminus B$ are parts of $\hat P$, and let $P'$ be a refinement of $\hat P$ into at most $t|P|$ members of $\mathcal{F}$. The refinement $P'$ exists as $\mathcal{F}$ is $t$-splittable and is closed under intersections, $P$ consists of members of $\mathcal{F}$, $A \in \mathcal{F}$, and $S_i \cap B= S_i \cap A \in \mathcal{F}$ if $S_i \cap B \in \hat P$. As $P'$ is a refinement of $\hat P$ which is a refinement of $P$, we have $\phi(P') \geq \phi(\hat P) \geq \phi(P)$. Let $R \in \{S_i,S_i \cap B,S_i \setminus B\}$, where $S_i$ is a part of $P$ that is refined into two parts in  $\hat P$, so that $\mathbb{E}[1_{R}] \geq t\eta$.  Letting $u=\frac{\mathbb{E}[g1_R]}{\mathbb{E}[1_R]}$, we see, since $g$ is upper $\eta$-regular and using (\ref{upregcomp}), that $u \leq 1+t\eta (t\eta)^{-1} = 2$ and hence $\phi(u)=u^2$. It follows, by considering the functions pointwise, that  $\phi(g_{\hat P})-\phi(g_P)=g_{\hat P}^2-g_P^2$. Hence,
\begin{align*}
  \phi(P')-\phi(P) &\geq
  \phi(\hat P)-\phi(P)=\mathbb{E}[g_{\hat P}^2]-
\mathbb{E}[g_P^2]=\mathbb{E}[g_{\hat P}^2-g_P^2]=
\mathbb{E}[\left(g_{\hat P}-g_P\right)^2] \\ &\geq
\mathbb{E}[(g_{\hat P} - g_P)1_B]^2 =
\mathbb{E}[(g - g_P)1_B]^2 > \frac{\epsilon^2}{4}=\alpha.
\end{align*}
The third equality above is the Pythagorean identity, which uses that $\hat P$ is a refinement of $P$, and the second inequality is an application of the Cauchy-Schwarz inequality. However, since $P'$ is a refinement of $P$ consisting of members of $\mathcal{F}$ with $|P'| \leq t|P|$, this contradicts $\phi(P')-\phi(P) < \alpha$ from the definition of $P$ and completes the proof.
\end{proof}

To establish the weak hypergraph regularity lemma, Theorem \ref{thm:weak-reg-hyp}, we use Lemma \ref{lem:weak-reg-general} with $X=V_1 \times \cdots \times V_r$  and $\mathcal{F}$ being the family of subsets of $X$ which form the $r$-cliques of some $r$-partite $(r-1)$-uniform hypergraph with parts $V_1,\ldots,V_r$. Noting that $\mathcal{F}$ is $2^r$-splittable in this case, we obtain Theorem \ref{thm:weak-reg-hyp}.

\section{The counting lemma} \label{sec:counting-lemma}

The three main ingredients in our proof of the counting
lemma (Theorem~\ref{thm:counting-lemma}) are as follows.
\begin{enumerate}
\item A standard telescoping argument~\cite{BCLSV08} in the dense case, i.e., when $\nu = 1$.
\item Repeated applications of the Cauchy-Schwarz
  inequality. This is a standard technique in this area, e.g., \cite{Gow01,Gow07,GT08,Tao06jam}.
\item \emph{Densification}. This is the main new ingredient in our proof. At
  each step, we reduce the problem of counting $H$ in a particular weighted hypergraph
  to that of counting $H$ in a modified weighted hypergraph.
  For an edge $e \in H$, we replace the triple $(\nu_e, g_e, \tg_e)$ by a new triple
   $(1, g'_e, \tg'_e)$ with $0 \leq
  g'_e, \tg'_e \leq 1$ and such that $(g'_e,\tg'_e)$ is an $\e'$-discrepancy
  pair for some $\e' = o_{\e \to 0}(1)$. By repeatedly applying
  this reduction to all $e \in H$ (we use induction), we
  reduce the counting lemma to the dense case.
\end{enumerate}

We developed the densification technique in our earlier
paper~\cite{CFZ14}, where we proved a sparse counting lemma in graphs. We
have significantly simplified a number of technical steps
from~\cite{CFZ14} in order to extend the densification technique to
hypergraphs here.

\subsection{Telescoping argument} The following argument allows us to
prove the counting lemma in the dense case, i.e., when $0 \leq g \leq 1$.

\begin{lemma}[Telescoping discrepancy argument for dense hypergraphs]
  \label{lem:telescoping}
  Theorem~\ref{thm:counting-lemma} holds if we assume that there
  is some $e_1 \in H$ so that $\nu_e
  = 1$ for all $e \in H \setminus \{e_1\}$. In fact, in this case,
\begin{equation}\label{eq:telescoping}
\biggl\lvert
\EE\Bigl[\prod_{e \in H} g_e(x_e)
        \Big\vert
        x \in V_J
        \Bigr]
        -
        \EE\Bigl[\prod_{e \in H} \tg_e(x_e)
        \Big\vert
        x \in V_J
        \Bigr]
        \biggr\rvert \leq \abs{H} \e.
      \end{equation}
    \end{lemma}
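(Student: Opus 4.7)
The plan is to prove Lemma~\ref{lem:telescoping} via the standard telescoping identity, using the $\e$-discrepancy pair hypothesis in a strengthened form that tests against products of $[0,1]$-valued functions on the boundary rather than indicator products alone.

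First I would fix an enumeration $H = \{e_1, e_2, \dots, e_m\}$ that places the distinguished edge (for which $\nu_{e_1}$ may differ from $1$) first, and write
\begin{equation*}
\prod_{e \in H} g_e(x_e) - \prod_{e \in H} \tg_e(x_e)
= \sum_{i=1}^{m} \Bigl(\prod_{j < i} \tg_{e_j}(x_{e_j})\Bigr)\bigl(g_{e_i} - \tg_{e_i}\bigr)(x_{e_i})\Bigl(\prod_{j > i} g_{e_j}(x_{e_j})\Bigr).
\end{equation*}
The ordering is chosen so that every ``other'' factor in the $i$-th summand automatically lies in $[0,1]$: we have $0 \leq \tg_{e_j} \leq 1$ by hypothesis for $j < i$, and when $j > i \geq 1$ we have $e_j \neq e_1$, hence $0 \leq g_{e_j} \leq \nu_{e_j} = 1$. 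It then suffices to prove that each of the $|H|$ summands has expectation bounded in absolute value by $\e$.

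For a fixed $i$, freezing the variables $x_{J \setminus e_i}$ turns each ``other'' factor into a $[0,1]$-valued function of $x_{e_j \cap e_i}$. Because $e_j \neq e_i$ while $|e_j| = |e_i| = r$, the intersection $e_j \cap e_i$ is a proper subset of $e_i$, so it is contained in some $f_j \in \del e_i$. Assigning each factor to such an $f_j$ and grouping, the product of the ``other'' factors becomes $\prod_{f \in \del e_i} h_f(x_f)$ with each $h_f \colon V_f \to [0,1]$. The layer-cake representation $h_f(x_f) = \int_0^1 \one_{\{h_f > t_f\}}(x_f)\, dt_f$ together with Fubini then gives
\begin{equation*}
\prod_{f \in \del e_i} h_f(x_f) = \int_{[0,1]^{\del e_i}} \prod_{f \in \del e_i} \one_{B_f(t_f)}(x_f)\, dt,
\qquad B_f(t_f) := \{x_f \in V_f : h_f(x_f) > t_f\}.
\end{equation*}

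Finally, the $\e$-discrepancy pair hypothesis for $(g_{e_i}, \tg_{e_i})$ applies uniformly over all choices of subsets $B_f \subseteq V_f$, so the inner expectation in $x_{e_i}$ is bounded by $\e$ for each fixed $t \in [0,1]^{\del e_i}$; integrating over the unit cube and then averaging over $x_{J \setminus e_i}$ yields an $\e$ bound on the $i$-th summand, and summing over $i$ gives $|H|\e$. I expect no substantive obstacle here: the only mildly technical point is the passage from the indicator form of the discrepancy hypothesis to its $[0,1]$-valued analogue, which the layer-cake handles cleanly. No additional pseudorandomness of $\nu$ is required, consistent with the ``dense'' setting $\nu_e = 1$ for all $e \in H \setminus \{e_1\}$.
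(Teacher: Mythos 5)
Your proof is correct and follows essentially the same route as the paper: identical telescoping decomposition with $e_1$ placed first, the same observation that the "other" factors in each summand are $[0,1]$-valued (using $\tg \leq 1$ and $g_{e_j} \leq \nu_{e_j} = 1$ for $j \neq 1$), and an application of the discrepancy hypothesis to each term after freezing the outside variables. The only cosmetic difference is that you justify the passage from indicator test functions to $[0,1]$-valued test functions via the layer-cake formula and Fubini, whereas the paper notes that the expression is multilinear in the $u_f$'s so the extrema occur at $\{0,1\}$-valued functions; these are equivalent standard arguments.
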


Lemma~\ref{lem:telescoping} uses only the assumption that $(g_e,
\tg_e)$ is an $\e$-discrepancy pair for every $e \in H$ and nothing about the linear forms
condition on $\nu$. Recall that for each fixed $e \in H$, the condition that
$(g_e, \tg_e)$ is an $\e$-discrepancy pair means that
for all subsets $B_f \subseteq V_f$, $f \in \del e$, we have
  \begin{equation}
    \label{eq:disc-pair-restated}
    \Bigl\lvert
    \EE\Bigl[
    (g_e(x_e) - \tg_e(x_e)) \prod_{f \in \del e}
        1_{B_f}(x_f)
        \Big\vert x_e \in V_e
        \Bigr]\Bigr\rvert \leq \e.
  \end{equation}
This is equivalent to the condition that for all functions $u_f \colon
V_f \to [0,1]$, $f \in \del e$, we have
  \begin{equation}
    \label{eq:disc-pair-u}
    \Bigl\lvert
    \EE\Bigl[
    (g_e(x_e) - \tg_e(x_e)) \prod_{f \in \del e}
        u_f(x_f)
        \Big\vert x_e \in V_e
        \Bigr]\Bigr\rvert \leq \e.
  \end{equation}
Indeed, the expectation is linear in each $u_f$ and hence the extrema
occur when the $u_f$'s are $\{0,1\}$-valued, thereby
reducing to \eqref{eq:disc-pair-restated}.

\begin{proof} Let $h = \abs{H}$ and order the edges of $H \setminus \{e_1\}$
  arbitrarily as $e_2, \dots, e_h$. We can write the left-hand side of
  \eqref{eq:telescoping}, without the absolute values, as a telescoping sum
  \begin{equation}\label{eq:telescope-sum}
  \sum_{t=1}^h \EE\Bigl[\Bigl(\prod_{s=1}^{t-1}
  \tg_{e_s}(x_{e_s})\Bigr) (g_{e_t}(x_{e_t}) -
  \tg_{e_t}(x_{e_t})) \Bigl(\prod_{s=t+1}^{h} g_{e_s}(x_{e_s})\Bigr)
  \Big\vert x \in V_J\Bigr].
\end{equation}
For the $t$-th term in the sum, when we fix the value of $x_{J \setminus e_t} \in
  V_{J \setminus e_t}$, the expectation has the form
  \begin{equation}
    \label{eq:telescope-term-u}
    \EE\Bigl[(g_{e_t}(x_{e_t}) - \tg_{e_t}(x_{e_t})) \prod_{f \in \del e_t}
        u_f(x_f) \Big\vert x_{e_t} \in V_{e_t}\Bigr]
    \end{equation}
    for some functions $u_f \colon V_f \to [0,1]$ (here we used the
    key fact that $g_{e_s} \leq 1$ for all $s > 1$ and $\tg_{e_s} \leq
    1$ for all $s$). Since $(g_{e_t}, \tg_{e_t})$ is an
    $\e$-discrepancy pair, \eqref{eq:disc-pair-u} implies that \eqref{eq:telescope-term-u} is bounded in
    absolute value by $\e$.  The same bound holds after we vary $x_{J
      \setminus e_t} \in V_{J \setminus e_t}$. So every term in
    \eqref{eq:telescope-sum} is bounded by $\e$ in absolute value, and
    hence \eqref{eq:telescope-sum} is at most $h\e$ in absolute value.
\end{proof}

\subsection{Strong linear forms}
\label{subsect:stronglinearform}
The main result of this subsection tells us that $\nu$ can be replaced by the constant function $1$ in counting expressions.
Though somewhat technical in detail, the main idea of the proof is quite simple and may be summarized as
follows: we use the Cauchy-Schwarz inequality to double each vertex $j$ of a certain edge in turn, at each step
majorizing those edges which do not contain $j$.
This method is quite standard in the field. In the work of Green and Tao, it is used to prove
generalized von Neumann theorems
\cite[Prop.~5.3]{GT08}, \cite[Thm.~3.8]{Tao06jam}, although the
statement of our lemma is perhaps more similar to the uniform
distribution property \cite[Prop.~6.2]{GT08},
\cite[Prop.~5.1]{Tao06jam}.

We begin by using a similar method to prove a somewhat easier result. It shows that if $\nu$ satisfies the $H$-linear forms condition then $(\nu, 1)$ is an $o(1)$-discrepancy pair, which implies Lemma~\ref{lem:nu-upper-regular}.


\begin{lemma} \label{lem:nu-1-disc}
Let $e$ be a finite set, $V_j$ a finite set for each $j \in
e$, and $V_e = \prod_{j\in e} V_j$. Then, for any function $\nu \colon V_e \to
\RR$ and any collection of $B_f \subseteq V_f$ for $f \in \del e$,
\begin{equation}
  \label{eq:nu-1-disc}
  \Bigl\lvert \EE\Bigl[ (\nu_e(x_e) - 1) \prod_{f \in \del e} 1_{B_f}(x_f)
      \Big\vert
      x_e \in V_e \Bigr] \Bigr\rvert
      \leq
      \EE \Bigl[ \prod_{\omega \in \{0,1\}^e}(\nu_e(x_e^{(\omega)}) -
      1) \Big\vert x_e^{(0)}, x_e^{(1)} \in V_e \Bigr]^{1/2^{\abs{e}}}.
\end{equation}
\end{lemma}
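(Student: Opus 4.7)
The plan is to prove the inequality by iterated Cauchy--Schwarz, doubling one vertex of $e$ at a time, in the style of a Gowers box norm. Write $F(x_e) := \nu_e(x_e) - 1$ and $r := \abs{e}$, and let $A$ denote the signed quantity inside the absolute value on the left-hand side of \eqref{eq:nu-1-disc}; the goal is to establish $|A|^{2^r} \leq \EE\bigl[\prod_{\omega \in \{0,1\}^e} F(x_e^{(\omega)})\bigr]$, since the right-hand side of \eqref{eq:nu-1-disc} is precisely the $2^r$-th root of this non-negative box-norm-like quantity.

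Fix an arbitrary ordering $j_1, j_2, \ldots, j_r$ of the vertices of $e$ and, at the $k$-th step, apply the Cauchy--Schwarz inequality in the single variable $x_{j_k}$, doubling it into copies $x_{j_k}^{(0)}$ and $x_{j_k}^{(1)}$. The two structural facts that make each step work are: (a) the indicator $1_{B_{f_j}}(x_{f_j})$ does not depend on $x_j$, since $j \notin f_j = e \setminus \{j\}$; and (b) for any other $i \in e$, the indicator $1_{B_{f_j}}$ does depend on $x_i$. At step $k$, I would isolate $1_{B_{f_{j_k}}}$---which, after the previous steps, has been duplicated $2^{k-1}$ times in the variables $x_{j_1}, \ldots, x_{j_{k-1}}$ but remains constant in $x_{j_k}$---pull it out of the $x_{j_k}$-integral, and place it on the bounded side of Cauchy--Schwarz, where its contribution is at most $1$ since indicators take values in $\{0,1\}$. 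The other side of Cauchy--Schwarz squares the remaining integrand in $x_{j_k}$, which duplicates both the $F$ factors and the still-surviving indicators $\{1_{B_{f_j}} : j \notin \{j_1, \ldots, j_k\}\}$ in the doubled variable.

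Proceeding inductively, after $k$ steps one obtains an inequality of the shape
\[
|A|^{2^k} \leq \EE\Bigl[ \prod_{\omega \in \{0,1\}^{\{j_1, \ldots, j_k\}}} F(x_e^{(\omega)}) \cdot \prod_{j \in e \setminus \{j_1, \ldots, j_k\}}\, \prod_\omega 1_{B_{f_j}}(x_{f_j}^{(\omega)}) \Bigr],
\]
where $\omega$ doubles only the already-processed coordinates. After all $r = \abs{e}$ steps, every indicator has been absorbed into a factor bounded by $1$ (precisely at the step when its distinguished vertex was processed), and the only surviving piece is the product of $F$ over all $\omega \in \{0,1\}^e$, which is exactly the desired bound.

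I expect the main difficulty to be purely bookkeeping: verifying at each step that the indicator being absorbed really is constant in the doubled variable, and that the surviving indicators together with the $F$ factors double cleanly under squaring. The conceptual reason the procedure terminates with the right expression is that the vertex $j_k$ is processed only \emph{after} the indicator $1_{B_{f_{j_k}}}$ has been duplicated enough times by earlier steps to account for every coordinate it depends on, so that it can be absorbed as a bounded factor rather than propagated forward.
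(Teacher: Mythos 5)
Your proof is essentially identical to the paper's: both establish the Gowers--Cauchy--Schwarz inequality by iterated Cauchy--Schwarz, doubling one vertex $j$ of $e$ at a time and absorbing the unique indicator $1_{B_{e\setminus\{j\}}}$ (constant in $x_j$) as a $[0,1]$-bounded factor. The paper organizes the bookkeeping with quantities $Q_d$ indexed by subsets $d \subseteq e$ and the recursion $Q_d^2 \leq Q_{d\cup\{j\}}$, which matches your inductive invariant exactly.
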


Lemma~\ref{lem:nu-1-disc} follows from a direct application of the
Gowers-Cauchy-Schwarz~\cite{Gow01} inequality for hypergraphs (see~\cite{CFZab}). We include
the proof here for completeness.

\begin{proof}
  For $\emptyset \subseteq d \subseteq e$, let
  \[
    X_d := \prod_{\omega \in \{0,1\}^d}(v_e(x_{e\setminus d},
    x_d^{(\omega)}) - 1), \qquad
    Y_d := \prod_{\substack{f \in \del e \\ f \supseteq d}} \prod_{\omega \in \{0,1\}^d}
    1_{B_f}(x_{f\setminus d}, x_d^{(\omega)}),
\]
  and
  \[
  Q_d := \EE[ X_d Y_d \vert x_{e \setminus d} \in V_{e \setminus d}, \
  x_d^{(0)}, x_d^{(1)} \in V_d ].
  \]
  Then \eqref{eq:nu-1-disc} can be written as $\abs{Q_\emptyset} \leq
  Q_e^{1/2^{\abs{e}}}$. By induction, it suffices to show that $Q_d^2 \leq Q_{d \cup
    \{j\}}$ whenever $j \in e \setminus d$. Let $Y_d = Y_d^{\ni j} Y_d^{\not\ni
    j}$ where $Y_d^{\ni j}$ consists of all the factors in $Y_d$ that
  contain $x_j$ in the argument, and $Y_d^{\not\ni j}$ consists of all
  other factors. By the Cauchy-Schwarz inequality, we have
  \[
  Q_d^2
  = \EE[\EE[X_d Y_d^{\ni j} \vert x_j \in V_j] Y_d^{\not\ni j}]^2
  \leq \EE[ \EE[X_d Y_d^{\ni j} \vert x_j \in V_j]^2] \EE[(Y_d^{\not
    \ni j})^2]
  \leq Q_{d\cup \{j\}},
  \]
  since $Q_{d\cup\{j\}} = \EE[ \EE[X_d Y_d^{\ni j} \vert x_j \in
  V_j]^2]$ and $0 \leq Y_d^{\not \ni j} \leq 1$, where the outer
  expectations are taken over all free variables. This shows that $Q_d^2
  \leq Q_{d \cup \{j\}}$. Hence, $\abs{Q_\emptyset} \leq
  Q_e^{1/2^{\abs{e}}}$, as desired.
\end{proof}

The next lemma is very similar, except that now we need to
invoke the linear forms condition.

\begin{lemma}[Strong linear forms]  \label{lem:strong-linear-forms}
  Let $V = (J, (V_j)_{j \in J}, r, H)$ be a hypergraph system and let
  $\nu$ be a weighted hypergraph on $V$ satisfying the linear forms
  condition. Let $e_1 \in H$. For each $\iota \in \{0,1\}$ and $e \in
  H \setminus \{e_1\}$, let $g^{(\iota)}_e \colon V_e \to \RR_{\geq 0}$ be a
  function so that either $g_e^{(\iota)} \leq 1$ or
  $g_e^{(\iota)}\leq \nu_e$ holds. Then
  \begin{equation}
    \label{eq:slf}
    \EE\Bigl[
    (\nu_{e_1}(x_{e_1}) - 1)
    \prod_{\iota \in \{0,1\}} \Bigl( \prod_{e \in H \setminus \{e_1\}}
    g_e^{(\iota)}(x^{(\iota)}_e)
    \Bigr)
    \Big\vert
    x_J^{(0)},x_J^{(1)} \in V_J; x_{e_1}^{(0)} =
    x_{e_1}^{(1)} = x_{e_1}
    \Bigr] = o(1).
  \end{equation}
\end{lemma}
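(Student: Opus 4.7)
The plan is to apply Cauchy--Schwarz $r = |e_1|$ times, one for each vertex of $e_1$, to ``double'' the variables $x_j$ for $j \in e_1$, in a manner analogous to the proof of Lemma~\ref{lem:nu-1-disc}. This reduces the bound to a consequence of the $H$-linear forms condition. The main new difficulty compared to Lemma~\ref{lem:nu-1-disc} is that some factors $g_e^{(\iota)}$ may be bounded by $\nu_e$ rather than by $1$, which requires care in order to avoid appearances of $\nu_e^2$ (which is not controlled by the linear forms condition).

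Enumerating $e_1 = \{u_1, \dots, u_r\}$, at the $k$-th step we would apply Cauchy--Schwarz to double the variable $x_{u_k}$. After $k$ such steps, the factor $\nu_{e_1}(x_{e_1}) - 1$ is replaced by a product over $\omega \in \{0,1\}^{\{u_1, \dots, u_k\}}$ of the $(\nu_{e_1}(x_{e_1}^{(\omega)}) - 1)$'s, and each $g_e^{(\iota)}$ whose edge $e$ contains $u_k$ is correspondingly doubled in the $x_{u_k}$-coordinate. For the factors $g_e^{(\iota)}$ with $u_k \notin e$, which naively become squared, we use the pointwise bound $g_e^{(\iota)2} \leq g_e^{(\iota)} \cdot \nu_e$ when $g_e^{(\iota)} \leq \nu_e$ (or $g_e^{(\iota)2} \leq 1$ when $g_e^{(\iota)} \leq 1$). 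This majorization is legitimate because each Cauchy--Schwarz step can be arranged so that the outer integrand is the square of an inner average, hence non-negative, which licenses pointwise majorization of the squared inner factors.

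After $r$ iterations we arrive at an inequality of the form
\[
|\mathrm{LHS}|^{2^r} \leq \EE\Bigl[\prod_{\omega \in \{0,1\}^{e_1}}\bigl(\nu_{e_1}(x_{e_1}^{(\omega)}) - 1\bigr)\cdot \prod_{e \neq e_1,\, \iota,\, \omega'} h_e^{(\iota,\omega')}(\cdots) \Bigr],
\]
where each $h_e^{(\iota,\omega')}$ is either $\nu_e$ or $1$ and each $\nu$-factor appears with exponent at most $1$. Expanding
\[
\prod_{\omega}\bigl(\nu_{e_1}^{(\omega)} - 1\bigr) = \sum_{S \subseteq \{0,1\}^{e_1}}(-1)^{2^r - |S|}\prod_{\omega \in S} \nu_{e_1}^{(\omega)}
\]
rewrites the right-hand side as a signed sum of $H$-linear-forms expressions. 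By the $H$-linear forms condition, each such expression equals $1 + o(1)$, and the combinatorial identity $\sum_S (-1)^{2^r - |S|} = 0$ causes the leading $1$'s to cancel, giving $|\mathrm{LHS}|^{2^r} = o(1)$ and hence the claim.

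The main obstacle is correctly executing the majorization of the squared $g$-factors at each Cauchy--Schwarz step so that the resulting upper bound remains valid despite the signed factor $\nu_{e_1} - 1$, which makes naive pointwise majorization unsound. This is navigated by grouping factors in each Cauchy--Schwarz step so that the outer integrand becomes the square of an expectation (hence non-negative), permitting pointwise majorization of the inner squared factors and keeping all $\nu$-exponents bounded by $1$ at every stage of the iteration.
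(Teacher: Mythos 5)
Your overall plan is the right one: iterate Cauchy--Schwarz over the vertices of $e_1$ as in Lemma~\ref{lem:nu-1-disc}, finish with the $H$-linear forms condition, and you correctly isolate the new difficulty (factors $g_e^{(\iota)}\leq\nu_e$ threaten to produce uncontrolled $\nu_e^2$). However, the specific maneuver you propose to handle the non-doubled factors does not work, and this is precisely where the real content of the lemma lies.

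You apply the plain Cauchy--Schwarz inequality, which for the term $Q_d = \EE\bigl[\EE[X_d Y_d^{\ni j}\mid x_j]\,Y_d^{\not\ni j}\bigr]$ gives $Q_d^2 \leq \EE\bigl[\EE[X_d Y_d^{\ni j}\mid x_j]^2\bigr]\cdot\EE\bigl[(Y_d^{\not\ni j})^2\bigr]$, so that $Y_d^{\not\ni j}$ is genuinely squared. Your proposed fix is the pointwise bound $g_e^{(\iota)2}\leq g_e^{(\iota)}\nu_e$ for the factors with $j\notin e$. But this does not escape the $\nu^2$ problem: the resulting expectation $\EE[\prod g_e^{(\iota)}\nu_e\cdots]$ is not in the form of \eqref{eq:H-lfc}, because the factor $g_e^{(\iota)}$ is an arbitrary nonnegative function bounded only by $\nu_e$. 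To evaluate it you would need either to bound $g_e^{(\iota)}\leq\nu_e$ (giving $\nu_e^2$, which the linear forms condition does not control, exactly as you warn) or to carry the $g_e^{(\iota)}\nu_e$ factor to the next step, where it is squared again and the exponent blows up. In either case the iteration does not close. Relatedly, the justification ``the outer integrand is the square of an inner average, hence non-negative, which licenses pointwise majorization of the squared inner factors'' is unsound: one cannot majorize factors that sit \emph{inside} a square, since $t\mapsto t^2$ is not monotone. Majorization is only legitimate for nonnegative factors sitting \emph{outside}, as multipliers of a nonnegative quantity.

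The paper's proof repairs this by using a \emph{weighted} Cauchy--Schwarz step (the Gowers--Cauchy--Schwarz pattern): writing $\EE[A B]^2\leq \EE[A^2 B]\,\EE[B]$ with $A=\EE[X_d Y_d^{\ni j}\mid x_j]$ and $B=Y_d^{\not\ni j}$, so that $B$ appears with exponent one in each of the two resulting factors rather than being squared. Since $A^2\geq 0$ and the expectation $\EE[B]$ is over nonnegative terms, it is legitimate to majorize $B\leq\ol B$ pointwise in both places, where each majorizer $\og_e^{(\iota)}$ is either $1$ or $\nu_e$ and is fixed in advance. The first factor becomes exactly $Q_{d\cup\{j\}}$ (whose definition already replaces $g_e^{(\iota)}$ by $\og_e^{(\iota)}$ whenever $e\nsupseteq d$), and the second factor is $1+o(1)$ by the $H$-linear forms condition since it involves only $\nu$-factors with exponent at most one. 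This is the step your writeup is missing, and without it the induction does not go through.
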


In \eqref{eq:slf} the notation $x_{e_1}^{(0)} =
    x_{e_1}^{(1)} = x_{e_1}$ means that $x_j^{(0)}, x_j^{(1)}, x_j$
    are taken to be the same for all $j \in e_1$.
Recall that we write $o(1)$ for a quantity that tends to zero as $N
\to \infty$.

\begin{proof}
  For each $\iota \in \{0,1\}$ and $e \in H\setminus\{e_1\}$, let $\og_e^{(\iota)}$ be either $1$ or $\nu_e$ so that $g_e^{(\iota)} \leq
  \og_e^{(\iota)}$ holds. For $\emptyset \subseteq d \subseteq e_1$, define
  \begin{align*}
    X_d &:= \prod_{\omega \in \{0,1\}^d}(\nu_{e_1}(x_{e_1\setminus d},
    x_d^{(\omega)}) - 1), \\
    Y_d &:= \prod_{\iota \in \{0,1\}} \prod_{e \in H \setminus \{e_1\}}
    \prod_{\omega \in \{0,1\}^{e \cap d}}
      \left\{
        \begin{array}{ll}
          g_e^{(\iota)}(x_{e\setminus e_1}^{(\iota)}, x_{d}^{(\omega)}, x_{e \cap e_1 \setminus d}) & \text{if } e
          \supseteq d \\
          \og_e^{(\iota)}(x_{e\setminus e_1}^{(\iota)}, x_{e\cap
            d}^{(\omega)}, x_{e \cap e_1 \setminus d}) & \text{if } e
          \nsupseteq d \\
          \end{array}
          \right\},
        \end{align*}
and
\[
        Q_d := \EE \bigl[ X_dY_d \big\vert x^{(0)}_{(J\setminus
    e_1) \cup d},x^{(1)}_{(J\setminus e_1)\cup d} \in V_{(J
    \setminus e_1) \cup d}, \ x_{e_1 \setminus d} \in V_{e_1 \setminus
    d} \bigr].
\]
We observe that $Q_\emptyset$ is equal to the left-hand side of
  \eqref{eq:slf} and
  \[
  Q_{e_1} = \EE \Bigl[ \prod_{\omega \in \{0,1\}^{e_1}} (
  \nu_{e_1}(x_{e_1}^{(\omega)}) - 1) \
  \prod_{\iota \in \{0,1\}} \prod_{e \in H \setminus \{e_1\}}
  \prod_{\omega \in \{0,1\}^{e \cap e_1}}
  \og_e^{(\iota)}(x_{e\setminus e_1}^{(\iota)}, x_{e\cap
    e_1}^{(\omega)})
  \Big\vert
  x_J^{(0)},x_J^{(1)} \in V_J
  \Bigr] = o(1)
  \]
  by the linear forms condition~\eqref{eq:H-lfc}.\footnote{This is where the weak $2$-blow-up of $H$ arises, since the estimate $Q_{e_1} = o(1)$ only relies upon knowing that $\nu$ has roughly the expected density for certain subgraphs of the weak $2$-blow-up.} Indeed, after we expand $\prod_{\omega \in \{0,1\}^{e_1}} (
  \nu_{e_1}(x_{e_1}^{(\omega)}) - 1)$, every term in $Q_{e_1}$ has the
  form of
  \eqref{eq:H-lfc} (since $\og_e^{(\iota)}$ is $1$ or $\nu_e$). Thus
  $Q_{e_1}$ is the sum of $2^{|e_1|}$ terms, each of which is $\pm(1 +
  o(1))$ by the linear forms condition, and they cancel accordingly to $o(1)$.

  We claim that if $j \in e_1 \setminus d$ then
  \begin{equation}
    \label{eq:slf-cs}
    \lvert Q_d \rvert \leq (1 + o(1)) Q_{d \cup
      \{j\}}^{1/2},
  \end{equation}
  from which it would follow by induction that
  \[
  \abs{\text{LHS of \eqref{eq:slf}}} = \abs{Q_\emptyset} \leq (1 +
  o(1))Q_{e_1}^{1/2^r} = o(1).
  \]
  Now we prove~\eqref{eq:slf-cs}. Let $Y_d = Y_d^{\ni j} Y_d^{\not\ni
    j}$ where $Y_d^{\ni j}$ consists of all the factors in $Y_d$ that
  contain $x_j$ in the argument, and $Y_d^{\not\ni j}$ consists of all
  other factors. Using the Cauchy-Schwarz inequality and $Y_d^{\not\ni j} \leq \ol Y_d^{\not\ni
    j}$ one has
  \begin{align}
    Q_d^2 &=
    \EE [ \EE[ X_d Y_d^{\ni j} \vert x_j \in V_j]
    Y_d^{\not\ni j} ]^2
\leq
    \EE [\EE[ X_d Y_d^{\ni j} \vert x_j \in V_j]^2
    Y_d^{\not\ni j} ] \
    \EE [Y_d^{\not\ni j} ] \nonumber
    \\
    &\leq
    \EE [\EE[ X_d Y_d^{\ni j} \vert x_j \in V_j]^2
    \ol Y_d^{\not\ni j}] \ \EE [\ol Y_d^{\not\ni j}]
= Q_{d\cup\{j\}} \ \EE [\ol Y_d^{\not\ni j}] \label{eq:slf-post-cs}
  \end{align}
  where the outer expectations are taken over all free variables. The second factor in \eqref{eq:slf-post-cs} is $1
  + o(1)$ by the linear forms condition~\eqref{eq:H-lfc} as $\ol
  Y_d^{\not\ni j}$ consists only of $\nu$ factors. This proves \eqref{eq:slf-cs}.
\end{proof}

\subsection{Counting lemma proof}

As already mentioned, the main idea of the following proof is a process called densification, where we reduce the problem of counting $H$ in a sparse hypergraph to that of counting $H$ in a dense hypergraph by replacing sparse edges with dense edges one at a time. Several steps are needed to densify a given edge $e_1$. The first step is to double all vertices outside of $e_1$ and to majorize $g_{e_1}$ by $\nu_{e_1}$. We then use the strong linear forms condition to remove the edge corresponding to $e_1$ entirely. This leaves us with the seemingly harder problem of counting the graph $H'$ consisting of two copies of $H\char92\{e_1\}$ joined along the vertices of $e_1$. However, an inductive hypothesis tells us that we can count copies of $H \char92\{e_1\}$. The core of the proof is in showing that this allows us to replace one of the copies of $H\char92\{e_1\}$ in $H'$ by a dense edge, thus reducing our problem to that of counting $H$ with one edge replaced by a dense edge.

\begin{proof}[Proof of Theorem~\ref{thm:counting-lemma}]
  We use induction on $\abs{\set{e \in H : \nu_e \neq 1}}$. When
  $\abs{\set{e \in H : \nu_e \neq 1}} = 0$ or $1$, the result follows
  from Lemma~\ref{lem:telescoping}. Now take $e_1 \in H$ so that
  $\nu_{e_1} \neq 1$.

  We assume that $\abs{J}$ is a fixed constant. We write $o(1)$ for a
  quantity that tends to zero as $N \to \infty$ and $o_{\e \to 0}(1)$
  for a quantity that tends to zero as $N \to \infty$ and $\e \to 0$.
  We need to show that the following quantity is $o_{\e \to 0}(1)$:
  \begin{multline} \label{eq:counting-initial-split}
        \EE\Bigl[\prod_{e \in H} g_e(x_e)
        \Big\vert
        x \in V_J
        \Bigr] -
        \EE\Bigl[\prod_{e \in H} \tg_e(x_e)
        \Big\vert
        x \in V_J
        \Bigr] \\
        =
    \EE\Bigl[ g_{e_1}(x_{e_1}) \Bigl( \!\!\! \prod_{e \in H \setminus \{e_1\}} \!\!\!
        g_e(x_e) - \!\!\!\prod_{e \in H \setminus \{e_1\}} \!\!\!
        \tg_e(x_e)\Bigr) \Big\vert x
        \in V_J \Bigr]
    +
    \EE\Bigl[(g_{e_1}(x_{e_1}) - \tg_{e_1}(x_{e_1})) \Bigl( \!\!\!\prod_{e
      \in H \setminus \{e_1\}} \!\!\!
                \tg_e(x_e) \Bigr) \Big\vert
              x \in V_J\Bigr].
  \end{multline}
  The second term in~\eqref{eq:counting-initial-split} is at most $\e$ in absolute
  value since $(g_{e_1},\tg_{e_1})$ is an $\e$-discrepancy pair and $\tg \leq
  1$ (e.g., see proof of
  Lemma~\ref{lem:telescoping}). It remains to show that the first
  term in \eqref{eq:counting-initial-split} is $o_{\e \to
    0}(1)$.

    Define functions $\nu'_{e_1}, g'_{e_1}, \tg'_{e_1} \colon V_{e_1}
    \to \RR_{\geq 0}$ by
  \begin{align}
    \nu'_{e_1}(x_{e_1}) &:= \EE \Bigl[\prod_{e \in H \setminus \{e_1\}}
      \nu_e(x_e) \Big\vert x_{J\setminus e_1} \in V_{J\setminus e_1}\Bigr] \label{eq:nu'},
      \\
     g'_{e_1}(x_{e_1}) &:= \EE \Bigl[\prod_{e \in H \setminus \{e_1\}}
      g_e(x_e)
      \Big\vert  x_{J\setminus e_1} \in V_{J\setminus e_1} \Bigr] \label{eq:g'},
    \\
     \tg'_{e_1}(x_{e_1}) &:= \EE \Bigl[\prod_{e \in H \setminus \{e_1\}}
      \tg_e(x_e)
      \Big\vert x_{J\setminus e_1} \in V_{J\setminus e_1} \Bigr] \label{eq:tg'}.
  \end{align}
 We have $g'_{e_1} \leq \nu'_{e_1}$ and $\tg_{e_1} \leq 1$
 (pointwise). In the rest of this proof, unless otherwise specified,
 expectations are for functions on $V_{e_1}$ with arguments varying uniformly
 over $V_{e_1}$. The linear forms condition~\eqref{eq:H-lfc}
 implies that $\EE[\nu'_{e_1}]= 1 + o(1)$ and $\EE[(\nu'_{e_1})^2] = 1 +
 o(1)$, so that\footnote{In fact, the only assumptions on $\nu$ needed for the
   proof of Theorem~\ref{thm:counting-lemma} are \eqref{eq:lfc-nu'} and the strong
   linear forms condition, Lemma~\ref{lem:strong-linear-forms}, as
   well as analogous conditions for other choices of $e_1 \in H$ and allowing some
   subset of the functions $\nu_e$ to be replaced by $1$.\label{ft:conditions}}
 \begin{equation} \label{eq:lfc-nu'}
  \EE[(\nu'_{e_1} - 1)^2] = o(1).
\end{equation}
The square of the first term in \eqref{eq:counting-initial-split}
  equals
  \begin{align}
    \EE[g_{e_1} (g'_{e_1} - \tg'_{e_1})]^2 \leq \EE[g_{e_1} (g'_{e_1}
    - \tg'_{e_1})^2] \ \EE[g_{e_1}] &\leq \EE[\nu_{e_1} (g'_{e_1} -
    \tg'_{e_1})^2 ] \ \EE [\nu_{e_1}] \nonumber
    \\
    &= (\EE[(g'_{e_1} - \tg'_{e_1})^2] + o(1))(1 +
    o(1)) \label{eq:count-main-cs}.
 \end{align}
 The first inequality above is due to the Cauchy-Schwarz
 inequality. In the final step, both factors are estimated using
 Lemma~\ref{lem:strong-linear-forms} (for the first factor, expand the square
 $(g'_{e_1} - \tg_{e_1}')^2$ and apply Lemma~\ref{lem:strong-linear-forms} term by term). Continuing
 \eqref{eq:count-main-cs} it suffices to show that the following
 quantity is $o_{\e \to 0}(1)$:
 \begin{equation} \label{eq:cutoff}
\EE[(g'_{e_1} - \tg'_{e_1})^2]
= \EE[(g'_{e_1} - \tg'_{e_1})(g'_{e_1} - g'_{e_1} \wedge 1)] +
\EE[(g'_{e_1} - \tg'_{e_1})(g'_{e_1} \wedge 1 - \tg'_{e_1})]
  \end{equation}
  (here $a \wedge b := \min\{a,b\}$). That is, we are capping the
  weighted hypergraph $g'_{e_1}$ by 1. Since $\nu'_{e_1}$ is very close to 1 by \eqref{eq:lfc-nu'},
  this should not result in a large loss. Indeed,
  since $0 \leq g'_{e_1} \leq \nu'_{e_1}$, we have
  \begin{equation} \label{eq:over-control}
    0 \leq g'_{e_1} - g'_{e_1} \wedge 1 = \max\{g'_{e_1} - 1,0\} \leq
    \max\{\nu'_{e_1} - 1,0\} \leq \lvert \nu'_{e_1} - 1\rvert.
  \end{equation}
  Using~\eqref{eq:over-control}, $g'_{e_1} \leq \nu'_{e_1}$, and
  $\tg'_{e_1} \leq 1$, we bound the magnitude of the first term on the
  right-hand side of \eqref{eq:cutoff} by
  \[
  \EE[(\nu'_{e_1} + 1)\abs{\nu'_{e_1} - 1}]
  = \EE[(\nu'_{e_1} - 1)\abs{\nu'_{e_1} - 1}] + 2 \EE[\abs{\nu'_{e_1} - 1}]
  \leq \EE[(\nu'_{e_1} - 1)^2] + 2 \EE[(\nu'_{e_1}-1)^2]^{1/2}
  = o(1)
  \]
  by the triangle inequality, the Cauchy-Schwarz inequality, and \eqref{eq:lfc-nu'}. To estimate the second term
  on the right-hand side of~\eqref{eq:cutoff}, we need the following claim.

  \begin{claim}$(g'_{e_1}\wedge 1,
  \tg'_{e_1})$ is an $\e'$-discrepancy pair with $\e' = o_{\e\to
    0}(1)$.
  \end{claim}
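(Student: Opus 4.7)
The plan is to reduce the claim to a single application of the inductive hypothesis in the proof of Theorem~\ref{thm:counting-lemma}, by rewriting the required discrepancy estimate as a counting-lemma discrepancy on a modified hypergraph where the edge $e_1$ has been made dense.

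Fix a choice of $B_f \subseteq V_f$ for each $f \in \del e_1$ and write $h_{e_1}(x_{e_1}) := \prod_{f \in \del e_1} 1_{B_f}(x_f)$, a $\{0,1\}$-valued function on $V_{e_1}$. The claim amounts to showing that $\abs{\EE[(g'_{e_1} \wedge 1 - \tg'_{e_1})(x_{e_1})\, h_{e_1}(x_{e_1})]} = o_{\e \to 0}(1)$. Using the pointwise bound $0 \le g'_{e_1} - g'_{e_1} \wedge 1 \le \abs{\nu'_{e_1} - 1}$ from \eqref{eq:over-control} together with $\EE\abs{\nu'_{e_1}-1} \leq \EE[(\nu'_{e_1}-1)^2]^{1/2} = o(1)$ from \eqref{eq:lfc-nu'} and Cauchy--Schwarz, replacing $g'_{e_1} \wedge 1$ by $g'_{e_1}$ introduces only an $o(1)$ error. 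After this replacement, expanding \eqref{eq:g'} and \eqref{eq:tg'} shows that it suffices to prove
\[
  \Bigl\lvert \EE\Bigl[h_{e_1}(x_{e_1}) \prod_{e \in H \setminus \{e_1\}} g_e(x_e)\Bigr] - \EE\Bigl[h_{e_1}(x_{e_1}) \prod_{e \in H \setminus \{e_1\}} \tg_e(x_e)\Bigr] \Bigr\rvert = o_{\e \to 0}(1).
\]

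To bound this, I will set up a modified instance of the counting lemma. Define weighted hypergraphs $g^*$, $\tg^*$, $\nu^*$ on $V$ by $g^*_{e_1} := \tg^*_{e_1} := h_{e_1}$, $\nu^*_{e_1} := 1$, and $g^*_e := g_e$, $\tg^*_e := \tg_e$, $\nu^*_e := \nu_e$ for $e \neq e_1$. Then $0 \le g^* \le \nu^*$, $0 \le \tg^* \le 1$, and for every $e \in H$ the pair $(g^*_e, \tg^*_e)$ is an $\e$-discrepancy pair (trivially so for $e = e_1$, where the two functions coincide). Moreover, $\nu^*$ satisfies the $H$-linear forms condition~\eqref{eq:H-lfc}: substituting $\nu^*_{e_1} = 1$ into \eqref{eq:H-lfc} for any choice of exponents collapses the factors indexed by $e_1$ to $1$, so the resulting expectation equals a linear-forms expectation for the original $\nu$ (with all $e_1$-exponents set to zero), which is $1 + o(1)$ by hypothesis. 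Crucially, $\abs{\set{e \in H : \nu^*_e \neq 1}}$ is strictly smaller than $\abs{\set{e \in H : \nu_e \neq 1}}$, so the inductive hypothesis of Theorem~\ref{thm:counting-lemma} applies to $(\nu^*, g^*, \tg^*)$ and, for $\e$ sufficiently small and $N$ sufficiently large, yields the desired $o_{\e \to 0}(1)$ bound on the displayed difference above. Combining with the initial $o(1)$ error then gives $\e' = o_{\e \to 0}(1)$.

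The main point to be careful about is verifying that the modified majorizing measure $\nu^*$ inherits the $H$-linear forms condition; this is essentially bookkeeping but it is exactly what justifies the single application of the inductive hypothesis. No new estimate on $\nu$ beyond the linear forms condition and \eqref{eq:lfc-nu'} is required, and no appeal to the strong linear forms lemma (Lemma~\ref{lem:strong-linear-forms}) is needed at this step---the densification of the edge $e_1$ to the $\{0,1\}$-valued function $h_{e_1}$ does all the work.
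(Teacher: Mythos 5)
Your proposal is correct and follows essentially the same approach as the paper: decompose into the ``capping error'' $\EE[(g'_{e_1}\wedge 1 - g'_{e_1}) h_{e_1}]$, bounded by $\EE[\abs{\nu'_{e_1}-1}] \leq \EE[(\nu'_{e_1}-1)^2]^{1/2} = o(1)$ via \eqref{eq:over-control} and \eqref{eq:lfc-nu'}, plus the term $\EE[(g'_{e_1} - \tg'_{e_1}) h_{e_1}]$, which the paper also handles by applying the induction hypothesis to the modified hypergraph with $(\nu_{e_1}, g_{e_1}, \tg_{e_1})$ replaced by $(1, g''_{e_1}, g''_{e_1})$ where $g''_{e_1} = h_{e_1}$. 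Your explicit verification that the modified $\nu^*$ inherits the $H$-linear forms condition (by collapsing the $e_1$-factors to $1$, i.e., setting $n_{e_1,\omega}=0$) spells out what the paper states tersely as ``Note that the linear forms condition continues to hold.''
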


\noindent
  \emph{Proof of Claim.}
  We need to show that, whenever $B_{f} \subseteq V_f$ for
  all $f \in \del e_1$, we have
  \begin{equation} \label{eq:cap-disc}
    \EE\Bigl[
    (g'_{e_1}(x_{e_1}) \wedge 1 - \tg'_{e_1}(x_{e_1})) \prod_{f \in \del e_1}
        1_{B_f}(x_f)
        \Big\vert x_{e_1} \in V_{e_1}
        \Bigr] = o_{\e \to 0}(1).
    \end{equation}
    Define $g''_{e_1} \colon V_{e_1} \to \RR_{\geq 0}$ by
  $g''_{e_1}(x_{e_1}) = \prod_{f \in \del e_1} 1_{B_f}(x_f)$. So the
  left-hand side of \eqref{eq:cap-disc} is equal to
  \begin{equation}        \label{eq:disc-cap-split}
    \EE[(g'_{e_1}\wedge 1 - g'_{e_1}) g''_{e_1}]
    + \EE[(g'_{e_1} - \tg'_{e_1}) g''_{e_1}].
  \end{equation}
  Using $0 \leq
  g''_{e_1} \leq 1$, \eqref{eq:over-control}, the Cauchy-Schwarz
  inequality, and \eqref{eq:lfc-nu'},
  we can bound the magnitude of the first term in
  \eqref{eq:disc-cap-split} by
  \[
  \EE[\abs{\nu'_{e_1} - 1}]
  \leq \EE[(\nu'_{e_1} - 1)^2]^{1/2} = o(1).
  \]
  The second term on the right-hand side of \eqref{eq:disc-cap-split}
  is equal to
  \[
      \EE\Bigl[\Bigl(\prod_{e \in H \setminus \{e_1\}} g_e(x_e) - \prod_{e
        \in H \setminus \{e_1\}}
        \tg_e(x_e)\Bigr) g''_{e_1}(x_{e_1})
        \Big\vert
        x \in V_J
        \Bigr].
  \]
  This is $o_{\e \to 0}(1)$ by the induction hypothesis
  applied to new weighted hypergraphs where the old
  $(\nu_{e_1}, g_{e_1},\tg_{e_1})$ gets replaced by $(1,g''_{e_1},g''_{e_1})$, thereby
  decreasing $\lvert\{ e \in H : \nu_e \neq 1\}\rvert$. Note that the
  linear forms condition continues to hold. Thus \eqref{eq:cap-disc}
  holds, so $(g'_{e_1}\wedge 1,
  \tg'_{e_1})$ is an $\e'$-discrepancy pair with $\e' = o_{\e\to 0}(1)$.
  \hfill$\square$

  \medskip

  We expand the second term of \eqref{eq:cutoff} as
  \begin{equation} \label{eq:densified-sum}
  \EE[(g'_{e_1} - \tg'_{e_1})(g'_{e_1} \wedge 1 - \tg'_{e_1})]
= \EE[g'_{e_1} (g'_{e_1}\wedge 1)] - \EE[g'_{e_1} \tg'_{e_1} ] -
\EE[\tg'_{e_1} (g'_{e_1}\wedge 1)] + \EE[(\tg'_{e_1})^2].
\end{equation}
We claim that each expectation on the right-hand side of
\eqref{eq:densified-sum} is $\EE[(\tg'_{e_1})^2] + o_{\e\to 0}(1)$. Indeed, by \eqref{eq:g'} and
  \eqref{eq:tg'} we have
\[
    \EE[g'_{e_1} (g'_{e_1}\wedge 1)]  - \EE[(\tg'_{e_1})^2] = \EE\Bigl[\Bigl((g'_{e_1}(x_{e_1}) \wedge 1) \hspace{-.8em}\prod_{e \in
      H \setminus \{e_1\}}\hspace{-.8em} g_e(x_e) \ - \ \tg'_{e_1}(x_{e_1}) \hspace{-.8em}\prod_{e \in H \setminus \{e_1\}}\hspace{-.8em}
    \tg_e(x_e)\Bigr)
    \Big\vert
    x \in V_J
    \Bigr],
\]
  which is $o_{\e \to 0}(1)$ by
  the induction hypothesis applied to new weighted hypergraphs where
  the old $(\nu_{e_1},g_{e_1},\tg_{e_1})$ is replaced by $(1, g'_{e_1}
  \wedge 1, \tg'_{e_1})$. This is allowed as $(g'_{e_1} \wedge 1,
  \tg'_{e_1})$ is an $\e'$-discrepancy pair with $\e' = o_{\e\to
    0}(1)$, the new $\nu$ still satisfies the linear forms condition, and $\abs{\set{e \in H : \nu_e \neq 1}}$ has decreased. The
  claims that the other terms on the right-hand side of
\eqref{eq:densified-sum} are each $\EE[(\tg'_{e_1})^2] + o_{\e\to 0}(1)$ are similar (in fact, easier). It follows that
  \eqref{eq:densified-sum} is $o_{\e \to 0}(1)$, so \eqref{eq:cutoff} is $o_{\e \to 0}(1)$ and
  we are done. \end{proof}

\section{Concluding remarks} \label{sec:concluding}

\noindent
\textbf{Conditions for counting lemmas.} In this paper, we determined sufficient conditions for establishing a relative Szemer\'edi theorem and, more generally, a counting lemma for sparse hypergraphs. We have assumed that the hypergraph we want to count within is a subgraph of a pseudorandom hypergraph. The main question then is to determine a good notion of pseudorandomness which is suffficient to establish a counting lemma.

There is a marked difference between this paper and our previous paper on graphs \cite{CFZ14} in terms of the type of pseudorandom condition assumed for the majorizing hypergraph. In this paper, we prove a counting lemma for a given hypergraph $H$ by assuming that the underlying pseudorandom hypergraph contains approximately the correct count for each hypergraph in a certain collection of hypergraphs $\mathcal{H}$ derived from $H$. That is, for each $H' \in \mathcal{H}$, we assume that our pseudorandom hypergraph contains $(1 + o(1)) p^{e(H')} n^{v(H')}$ labeled copies of $H'$, where $p$ is the edge density of the pseudorandom hypergraph.

The approach used in \cite{CFZ14} is equivalent, up to some polynomial loss in $\epsilon$, to assuming that the number of labeled cycles of length $4$ in our pseudorandom graph is $(1 + \epsilon)p^4 n^4$, where $\epsilon$ is now a carefully controlled term and the question of whether $H$ can be embedded in our pseudorandom graph depends on whether $\epsilon$ is sufficiently small with respect to $H$ and $p$. It is possible to adapt the methods of this paper so that the notion of pseudorandomness used for hypergraphs is more closely related to this latter notion. However, for the purposes of applying the results to a relative Szemer\'edi theorem, the current formulation seemed more appropriate.

\medskip

\noindent
\textbf{Gowers uniformity norms.}
For a function $f:\ZZ_N \rightarrow \mathbb{R}$, the Gowers $U^r$-norm of $f$ is defined to be
\[\norm{f}_{U^r}= \EE\Bigl[ \prod_{\omega \in \{0,1\}^r}
  f(x_0+\omega \cdot {\bf x})\Big\vert x_0,x_1,\ldots,x_r \in \ZZ_N \Bigr]^{1/2^{r}},
  \]
where ${\bf x}=(x_1,\ldots,x_r)$.
The following inequality, referred to as a generalized von Neumann theorem, bounds the weighted count of $(r+1)$-term arithmetic progressions from functions $f_0,\ldots,f_r$ in terms of the Gowers uniformity norm:
\[\Bigl\lvert \EE\Bigl[f_0(x)f_1(x+d)f_2(x+2d) \cdots f_r(x+rd)
\Big\vert x,d \in \mathbb{Z}_N \Bigr]\Bigr\rvert \leq \norm{f_j}_{U^r}
\prod_{i\neq j} \norm{f_i}_{\infty}.
\]
This fundamental fact is an important starting point for Gowers' celebrated proof \cite{Gow01}
of Szemer\'edi's theorem as well as many later developments in
additive combinatorics. For a sparse set $S \subseteq \ZZ_N$ of
density $p$, this inequality implies the correct count of $(r+1)$-term
arithmetic progressions in $S$ as long as $\norm{\nu-1}_{U^r} = o(p^r)$,
where $\nu=p^{-1}1_S$ (a more careful analysis shows that it suffices
to assume $\norm{\nu - 1}_{U^r} = o(p^{r/2})$).

Gowers~\cite{Gow10}\footnote{This question can be found in the penultimate
  paragraph in \S4 of the arXiv version of \cite{Gow10}.}
and Green~\cite{GreenPC} asked if  $\norm{\nu-1}_{U^{s}} = o(1)$
for some large $s = s(r)$ is sufficient for $\nu$ to satisfy a
relative Szemer\'edi theorem for $(r+1)$-term arithmetic
progressions. Note that this is precisely a linear forms condition and
we proved in this paper that a different linear forms condition is
sufficient. However, we do not even know if such a condition implies
the existence of $(r+1)$-term arithmetic progressions in
$\nu$. Clearly $s(r)$ cannot be too small and indeed we
know from the recent work of
Bennett and Bohman \cite{BB} on the random AP-free process that one
can find a $3$-AP-free $S \subset \ZZ_N$ such that $\nu = (N/|S|)1_S$ satisfies
$\|\nu-1\|_{U^2} = o(1)$. Therefore, if $s(2)$ exists, it must be
greater than $2$. More generally, they show that $s(r) > 1 + \log_2 r$. In a companion note \cite{CFZab}, we show that if a measure $\nu$ satisfies the stronger condition $\norm{\nu-1}_{U^r} = o(p^r)$, where $p=\norm{\nu}_{\infty}^{-1}$, then the relative Szemer\'edi theorem holds with respect to $\nu$ for $(r+1)$-term arithmetic
progressions. This strengthens the consequence of the generalized von Neumann theorem
discussed above.


\medskip

\noindent
\textbf{Corners in products of pseudorandom sets.}
Example~\ref{ex:corner} illustrates the
relative multidimensional Szemer\'edi theorem applied to a
pseudorandom set $S \subset
\ZZ_N^2$. However, the situation is quite different for $S \x S
\subset \ZZ_N^2$ with some
pseudorandom set $S \subset \ZZ_N$. Indeed, $S \x S \subset \ZZ_N^2$
does not satisfy the linear forms condition in
Example~\ref{ex:corner}. Intuitively, this is because the events
$(x,y) \in S \x S$ and $(x,y') \in S \x S$ are correlated as both
involve $x \in S$.

However, we may still deduce the following result using our
relative triangle removal lemma. Recall that a corner in $\ZZ_N^2$ is a
set of the form $\{(x,y),(x+d,y),(x,y+d)\}$, where $d \neq 0$.

\begin{proposition}
  \label{prop:product-corner}
If $S \subset \ZZ_N$ is
such that $\nu = \frac{N}{|S|}1_S$ satisfies
\begin{multline} \label{eq:prod-corner-lfc}
  \EE[\nu(x)\nu(x')\nu(z-x)\nu(z-x')\nu(z'-x)\nu(z'-x') \\
  \cdot \nu(y)\nu(y') \nu(z-y) \nu(z-y') \nu(z'-y) \nu(z'-y') \vert x,x',y,y',z,z'
  \in \ZZ_N] = 1 + o(1)
\end{multline}
and similar conditions hold if any subset of the $\nu$ factors are
erased, then any corner-free subset of $S \x S$ has size $o(|S|^2)$.
\end{proposition}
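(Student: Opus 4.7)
The plan is to apply the relative triangle removal lemma (Theorem~\ref{thm:rel-removal}) via the standard reduction from corners in $\ZZ_N^2$ to triangles in a tripartite graph. Take $V_1 = V_2 = V_3 = \ZZ_N$, thought of as indexing vertical, horizontal, and anti-diagonal lines in $\ZZ_N^2$; a triple $(x, y, z) \in V_1 \times V_2 \times V_3$ encodes the three points $(x, y), (x, z - x), (z - y, y) \in \ZZ_N^2$, which coincide when $z = x + y$ and otherwise form a corner with nonzero skew $d = z - x - y$.

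On this hypergraph system I will define the weighted majorant by $\nu_{12}(x, y) = \nu(x)\nu(y)$, $\nu_{13}(x, z) = \nu(x)\nu(z - x)$, $\nu_{23}(y, z) = \nu(y)\nu(z - y)$, capturing the cover of each edge class by $S \times S$. Correspondingly, set $g_{12}(x, y) = (N/|S|)^2 \mathbbm{1}_A(x, y)$, $g_{13}(x, z) = (N/|S|)^2 \mathbbm{1}_A(x, z-x)$, $g_{23}(y, z) = (N/|S|)^2 \mathbbm{1}_A(z-y, y)$, so that $g \leq \nu$ pointwise and $\EE[g_e] = |A|/|S|^2$. The triangle count $\EE[g_{12}g_{13}g_{23}]$ for a corner-free $A$ picks up only trivial triangles at $(x, y, x + y)$ with $(x, y) \in A$, giving $(N/|S|)^6 |A|/N^3 = o(1)$ as $N \to \infty$ (in the regime where $\nu$ satisfies~\eqref{eq:prod-corner-lfc}).

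The main technical hurdle is to verify the pseudorandomness hypothesis of Theorem~\ref{thm:rel-removal} for this $\nu$. The subtle point is that the full $K_3$-linear forms condition of Definition~\ref{def:H-lfc} does not obviously follow from~\eqref{eq:prod-corner-lfc}: the $4$-fold products in the blow-up expansion pick up repeated $\nu$-factors like $\nu(x)^2 = (N/|S|)\nu(x)$ that carry extra powers of $N/|S|$. However, as noted in footnote~\ref{ft:conditions}, the counting lemma proof only requires the weaker condition~\eqref{eq:lfc-nu'} together with the strong linear forms condition (Lemma~\ref{lem:strong-linear-forms}), and it allows some subset of the $\nu_e$ to be replaced by $1$. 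These weaker conditions can be verified by running the densification in a judicious order — e.g., densifying the ``additive'' edges $e_1 \in \{13, 23\}$ first — and reducing the required $\nu'_e$ and strong-LF expectations to subproducts of~\eqref{eq:prod-corner-lfc} via the change of variables (convolution) familiar from the relative $3$-AP setting.

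Assuming this verification, Theorem~\ref{thm:rel-removal} produces $E'_e \subseteq V_e$ of bounded complexity with $E'_{12} \cap E'_{13} \cap E'_{23}$ triangle-free and $\EE[g_e \mathbbm{1}_{V_e \setminus E'_e}] \leq \e$ for each $e$. Each $(x, y) \in A$ yields a trivial triangle at $(x, y, x + y)$ which must be broken by the removal, so at least one of $(x,y) \notin E'_{12}$, $(x, x + y) \notin E'_{13}$, $(y, x + y) \notin E'_{23}$ holds. Summing the three discrepancy bounds with the $(N/|S|)^2$ normalization in each $g_e$ gives $|A|/|S|^2 \leq 3\e$; letting $\e \to 0$ yields $|A| = o(|S|^2)$. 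The main obstacle will be the linear forms verification in the third paragraph: the natural setup has delicate scaling factors reflecting that $S \times S$ is not pseudorandom as a subset of $\ZZ_N^2$ (only as a product of a pseudorandom subset of $\ZZ_N$), and reconciling these with the sparse linear forms condition~\eqref{eq:prod-corner-lfc} via the weaker sufficient conditions is where the specific product structure of $S \times S$ enters essentially.
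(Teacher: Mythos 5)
There is a genuine gap at exactly the point you flag as the ``main technical hurdle,'' and the escape route you propose does not work. With $V_1=V_2=V_3=\ZZ_N$ and the product majorants $\nu_{12}(x,y)=\nu(x)\nu(y)$, $\nu_{13}(x,z)=\nu(x)\nu(z-x)$, $\nu_{23}(y,z)=\nu(y)\nu(z-y)$, the obstruction is not merely that the full $H$-linear forms condition of Definition~\ref{def:H-lfc} ``does not obviously follow'': the weaker sufficient conditions of footnote~\ref{ft:conditions} fail as well, for every choice of $e_1$ and every densification order. For instance, for $e_1=\{1,2\}$ one has $\nu'_{12}(x,y)=\nu(x)\nu(y)\,\EE_z[\nu(z-x)\nu(z-y)]$, so $\EE[(\nu'_{12})^2]$ contains the factor $\nu(x)^2\nu(y)^2=(N/|S|)^2\nu(x)\nu(y)$ and evaluates to $(N/|S|)^2(1+o(1))$, violating \eqref{eq:lfc-nu'}; for $e_1=\{1,3\}$ the averaged majorant $\nu'_{13}$ picks up $\nu(y)^2=(N/|S|)\nu(y)$ inside the $y$-average, and the quantity $Q_{e_1}$ in Lemma~\ref{lem:strong-linear-forms} likewise contains $\nu(x_1^{(0)})^2\nu(x_1^{(1)})^2$ after the vertex-doubling, so it is of order $(N/|S|)^2$ rather than $o(1)$. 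These divergent powers of $N/|S|$ come from the repeated point-factors $\nu(x)$, $\nu(y)$ and cannot be removed by reordering the densification or by passing to subproducts of \eqref{eq:prod-corner-lfc}; this is precisely the sense in which $S\x S$ fails to be pseudorandom in $\ZZ_N^2$.

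The fix is structural, not a matter of bookkeeping: restrict the vertex sets rather than weighting them. Take $X=Y=S$ (with the uniform measure on $S$) and $Z=\ZZ_N$, put the \emph{constant} majorant $1$ between $X$ and $Y$ (the complete bipartite graph), and put $\nu(z-x)$ between $X$ and $Z$ and $\nu(z-y)$ between $Y$ and $Z$. Since $\EE[F(x)\mid x\in S]=\EE[\nu(x)F(x)\mid x\in\ZZ_N]$, the $K_{2,2,2}$-type expectations for this system are exactly the expressions in \eqref{eq:prod-corner-lfc} (with the factors $\nu(x)\nu(x')\nu(y)\nu(y')$ now playing the role of the conditioning on $S$, and erased factors corresponding to subgraphs), so the $H$-linear forms condition holds verbatim and Theorem~\ref{thm:rel-removal} applies directly. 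The subgraph $G$ is then defined by $(x,y)\in A$, $(x,z-x)\in A$, $(z-y,y)\in A$ on the respective pairs, each of its edges lies in exactly one (degenerate) triangle $z=x+y$ because $A$ is corner-free, and removing all triangles with $o(|S|^2)$ edge deletions forces $|A|=o(|S|^2)$; this closing argument of yours is fine once the setup is corrected.
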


\begin{proof}[Proof (sketch)]
  Let $A$ be a corner-free subset of $S \x S$.  We build two
  tripartite graph $\Gamma$ and $G$ on the same vertex set $X \cup Y
  \cup Z$ with $X = Y = S$ and $Z = \ZZ_N$ (note that unlike the proof
  of Theorem~\ref{thm:rel-mul-sz} we do not take $X$ and $Y$ to be the
  whole of $\ZZ_N$ here). In $\Gamma$, we place a
  complete bipartite graph between $X$ and $Y$; between $Y$ and $Z$
  the edge $(y,z) \in Y \x Z$ is present if and only if $z-y \in S$; and 
  between $X$ and $Z$ the edge $(x,z) \in X \x Z$ is present if and
  only if $z-x \in S$. In $G$, between $X$ and $Y$ the edge $(x,y) \in
  (X,Y)$ is present if and only if $(x,y) \in A$; between $Y$ and $Z$
  the edge $(y,z) \in Y \x Z$ is present if and only if $(z-y,y) \in
  A$; and between $X$ and $Z$ the edge $(x,z) \in X \x Z$ is present if
  and only if $(x,z-x) \in A$.

  The vertices $(x,y,z) \in X \x Y \x Z$ form a triangle if and only
  if $(x,y),(z-y,y),(x,z-x) \in A$. These three points form a corner,
  which is degenerate only when $x+y =z$. Since $A$ is corner-free,
  every edge of $G$ is contained in exactly one triangle (namely
  the one that completes the equation $x + y = z$). In particular, $G$
  contains exactly $|A|$ triangles. After checking some
  hypotheses, we can apply our relative triangle removal lemma
  (as a special case of Theorem \ref{thm:rel-removal}) to conclude
  that it is possible to remove all triangles from $G$ by deleting $o(|S|^2)$
  edges. Since every edge of $G$ is contained in exactly one triangle, and $|G|$
  has $3|A|$ edges, we have $|A| = o(|S|^2)$, as
  desired.
\end{proof}

One can easily generalize the above Proposition to $S^m \subset
\ZZ_N^m$ (as before, $S \subset \ZZ_N$). Here a
corner is a set of the form $\{\bx, \bx + d\be_1, \dots, \bx + d\be_m\}$,
where $\bx \in \ZZ_N$, $0 \ne d \in \ZZ_N$, and $\be_i$ is the $i$-th
coordinate vector. Then, for any fixed $m$, any corner-free subset of
$S^m$ must have size $o(|S|^m)$, provided that $\nu = \frac{N}{|S|}1_S$
satisfies the linear forms condition
\begin{multline*}
\EE\biggl[ \prod_{i=1}^m \Bigl(\nu(x_i^{(0)})^{n_{i,0}}
\nu(x_i^{(1)})^{n_{i,1}} \hspace{-1.5em} \prod_{\omega
  \in \{0,1\}^{\{0\} \cup [m]\setminus \{i\}}} \hspace{-1.5em}  (x_0^{(\omega_0)} - \sum_{j \in [m]
    \setminus \{i\}} x_j^{(\omega_j)})^{n_{i,\omega}} \Bigr) \\
  \Big\vert x_0^{(0)},x_0^{(1)},\dots,x_m^{(0)},x_m^{(1)} \in \ZZ_N \biggr] = 1+o(1)
\end{multline*}
for any choices of exponents $n_{i,0},n_{i,1},n_{i,\omega} \in
\{0,1\}$.

A more general result concerning the existence of arbitrarily shaped
constellations in $S^m$ is known, provided that $S$ satisfies certain
stronger linear forms hypotheses. We refer the readers to
\cite{CMT13ar, FZ, TZ13ar} for further details. In particular, the
multidimensional relative Szemer\'edi theorem holds in
$P^m$, where $P$ is the primes.

\medskip

\noindent
\textbf{Sparse graph limits.}  The regularity method played a fundamental
role in the development of the theory of dense graph limits
\cite{BCLSV08, LS06}. However, no satisfactory theory of graph limits
is known for graphs with edge density $o(1)$. Bollob\'as and Riordan
\cite{BR09} asked a number of questions and made explicit conjectures on suitable
conditions for sparse graph limits and counting
lemmas. Our work gives some natural sufficient conditions for
obtaining a counting lemma in a
sequence of sparse graphs $G_N$. The new counting lemma allows us to transfer the results of Lov\'asz and Szegedy~\cite{LS06,LS07} on the
existence of the limit graphon, as well as the results of Borgs, Chayes, Lov\'asz,
S\'os, and Vesztergombi~\cite{BCLSV08} on the equivalence of
left-convergence (i.e., convergence in homomorphism densities) and
convergence in cut distance. The famous quasirandomness results of Chung,
Graham, and Wilson~\cite{CGW89} also transfer, namely, that an
appropriate relationship between edge density and
$C_4$-density (of homomorphisms) determines the asymptotic $F$-density for
every graph $F$. We will explain these connections in more detail in an
upcoming survey article \cite{CFZa}.

\medskip

\noindent
\textbf{Existing applications of the Green-Tao method.} Though our discussion has focused on the relative Szemer\'edi theorem, we have proved a relative version of the stronger multidimensional Szemer\'edi theorem. Following Tao \cite{Tao06jam}, this may be used to prove that the Gaussian primes contain arbitrarily shaped constellations, though without the need to verify either the correlation condition or the dual function condition. It seems likely that our method could also be useful for simplifying several other papers where the machinery of Green and Tao is used \cite{CM12, GT10, Le11, M12, M122, TZ08}. In some cases it should be possible to use our results verbatim but in others, such as the paper of Tao and Ziegler \cite{TZ08} proving that there are arbitrarily long polynomial progressions in the primes, it will probably require substantial additional work.




\medskip

\noindent
\textbf{Sparse hypergraph regularity.} In proving a hypergraph removal lemma for subgraphs of pseudorandom hypergraphs, we have developed a general approach to regularity and counting in sparse pseudorandom hypergraphs which has the potential for much broader application. It is, for example, quite easy to use our results to prove analogues of well-known combinatorial theorems such as Ramsey's theorem and Tur\'an's theorem relative to sparse pseudorandom hypergraphs of density $N^{-c_H}$. We omit the details. In the graph case, a number of further applications were discussed in \cite{CFZ14}. We expect that hypergraph versions of many of these applications should be an easy corollary of our results.
\medskip

 \noindent
{\bf Counting in random hypergraphs.} There has been much recent work on counting lemmas and relative versions of combinatorial theorems within random graphs and hypergraphs \cite{BMS, CG, CGSS, SaxT, Sch}. Surprisingly, there are a number of disparate approaches to these problems, each having its own strengths and weaknesses. We believe that our results can be used to give an alternative framework for one of these approaches, due to Conlon and Gowers \cite{CG}.\footnote{This should at least be true for theorems regarding graphs and hypergraphs, though we feel that a similar approach should also be possible for subsets of the integers.} Their proof relies heavily upon an application of the Green-Tao transference theorem, which we believe can be replaced with an application of the sparse Frieze-Kannan regularity lemma and our densification technique. However, the key technical step in \cite{CG}, which in our language is to verify that the strong linear forms condition, Lemma \ref{lem:strong-linear-forms}, holds when $\nu$ is a random measure, would remain unchanged.

\medskip

\noindent
\textbf{Sparse arithmetic removal.} In Theorem~\ref{thm:rel-mul-sz-removal}, we proved an arithmetic removal lemma for linear patterns such as arithmetic progressions. More generally, an arithmetic removal lemma
claims that if a system of linear equations $Ma = b$ over the integers has a small
number of solutions $a = (a_1, a_2, \dots, a_n)$ with $a_i \in A_i$
for all $i = 1, 2, \dots, n$ then one may remove a small number of
elements from each $A_i$ to find subsets $A'_i$ such that there are no
solutions $a' = (a'_1, a'_2, \dots, a'_n)$ to $Ma' = b$ with $a'_i \in
A'_i$ for all $i = 1, 2, \dots, n$. Such a result was conjectured by
Green \cite{G05gafa} and proved by Kr\'al', Serra, and Vena
\cite{KSV12} and, independently, Shapira \cite{Sha10}. Both of these
proofs are based upon representing a system of linear equations by a
hypergraph and deducing the arithmetic removal lemma from a hypergraph
removal lemma. Such an idea was first used by Kr\'al', Serra, and Vena
\cite{KSV09} with graphs (instead of hypergraphs). In \cite{CFZ14}, we
adapted the arguments of \cite{KSV09} to sparse pseudorandom subsets
of the integers using the removal lemma in sparse pseudorandom
graphs. Likewise, our results on hypergraph removal in this paper may
be used to prove a sparse pseudorandom generalization of the
arithmetic removal lemma \cite{KSV12,Sha10} for all systems of linear equations.

\medskip

\noindent \textbf{Acknowledgements.} We would like to thank Tom Bohman and Ben Green for helpful discussions.


\begin{thebibliography}{10}

\bibitem{BMS}
J.~Balogh, R.~Morris, and W.~Samotij, \emph{Independent sets in hypergraphs},
  J. Amer. Math. Soc., to appear.

\bibitem{BB}
P.~Bennett and T.~Bohman, \emph{A note on the random greedy independent set
  algorithm}, arXiv:1308.3732.

\bibitem{BR09}
B.~Bollob{{\'a}}s and O.~Riordan, \emph{Metrics for sparse graphs}, Surveys in
  combinatorics 2009, London Math. Soc. Lecture Note Ser., vol. 365, Cambridge
  Univ. Press, Cambridge, 2009, pp.~211--287.

\bibitem{BCLSV08}
C.~Borgs, J.~T. Chayes, L.~Lov{{\'a}}sz, V.~T. S{{\'o}}s, and K.~Vesztergombi,
  \emph{Convergent sequences of dense graphs. {I}. {S}ubgraph frequencies,
  metric properties and testing}, Adv. Math. \textbf{219} (2008), 1801--1851.

\bibitem{CGW89}
F.~R.~K. Chung, R.~L. Graham, and R.~M. Wilson, \emph{Quasi-random graphs},
  Combinatorica \textbf{9} (1989), 345--362.

\bibitem{CCF09}
A.~Coja-Oghlan, C.~Cooper, and A.~Frieze, \emph{An efficient sparse regularity
  concept}, SIAM J. Discrete Math. \textbf{23} (2009/10), 2000--2034.

\bibitem{CFZ14}
D.~Conlon, J.~Fox, and Y.~Zhao, \emph{Extremal results in sparse pseudorandom
  graphs}, Adv. Math. \textbf{256} (2014), 206--290.

\bibitem{CFZab}
\bysame, \emph{Linear forms from the {G}owers uniformity
  norm}, unpublished companion note.

\bibitem{CFZa}
\bysame, \emph{The sparse regularity method}, in preparation.

\bibitem{CG}
D.~Conlon and W.~T. Gowers, \emph{Combinatorial theorems in sparse random
  sets}, arXiv:1011.4310.

\bibitem{CGSS}
D.~Conlon, W.~T. Gowers, W.~Samotij, and M.~Schacht, \emph{On the {K}{{\L}}{R}
  conjecture in random graphs}, Israel J. Math., to appear.

\bibitem{CM12}
B.~Cook and A.~Magyar, \emph{Constellations in {$\Bbb P^d$}}, Int. Math. Res.
  Not. \textbf{2012} (2012), 2794--2816.

\bibitem{CMT13ar}
B.~Cook, A.~Magyar, and T.~Titichetrakun, \emph{A multidimensional
  {S}zemer\'edi theorem in the primes}, arXiv:1306.3025.

\bibitem{FZ}
J.~Fox and Y.~Zhao, \emph{A short proof of the multidimensional {S}zemer\'edi
  theorem in the primes}, Amer. J. Math., to appear.

\bibitem{FR02}
P.~Frankl and V.~R{{\"o}}dl, \emph{Extremal problems on set systems}, Random
  Structures Algorithms \textbf{20} (2002), 131--164.

\bibitem{FK99}
A.~Frieze and R.~Kannan, \emph{Quick approximation to matrices and
  applications}, Combinatorica \textbf{19} (1999), 175--220.

\bibitem{FK78}
H.~Furstenberg and Y.~Katznelson, \emph{An ergodic {S}zemer{\'e}di theorem for
  commuting transformations}, J. Analyse Math. \textbf{34} (1978), 275--291.

\bibitem{GY03}
D.~A. Goldston and C.~Y. Y{\i}ld{\i}r{\i}m, \emph{Higher correlations of
  divisor sums related to primes. {I}. {T}riple correlations}, Integers
  \textbf{3} (2003), A5, 66.

\bibitem{Gow01}
W.~T. Gowers, \emph{A new proof of {S}zemer{\'e}di's theorem}, Geom. Funct.
  Anal. \textbf{11} (2001), 465--588.

\bibitem{Gow07}
\bysame, \emph{Hypergraph regularity and the multidimensional {S}zemer{\'e}di
  theorem}, Ann. of Math. \textbf{166} (2007), 897--946.

\bibitem{Gow10}
\bysame, \emph{Decompositions, approximate structure, transference, and the
  {H}ahn-{B}anach theorem}, Bull. Lond. Math. Soc. \textbf{42} (2010),
  573--606, arXiv:0811.3103.

\bibitem{GreenPC}
B.~Green, \emph{Personal communication.}

\bibitem{G05gafa}
\bysame, \emph{A {S}zemer{\'e}di-type regularity lemma in abelian groups, with
  applications}, Geom. Funct. Anal. \textbf{15} (2005), 340--376.

\bibitem{GT08}
B.~Green and T.~Tao, \emph{The primes contain arbitrarily long arithmetic
  progressions}, Ann. of Math. \textbf{167} (2008), 481--547.

\bibitem{GT10}
\bysame, \emph{Linear equations in primes}, Ann. of Math.
  \textbf{171} (2010), 1753--1850.

\bibitem{Koh97}
Y.~Kohayakawa, \emph{Szemer{\'e}di's regularity lemma for sparse graphs},
  Foundations of computational mathematics ({R}io de {J}aneiro, 1997),
  Springer, Berlin, 1997, pp.~216--230.

\bibitem{KSV09}
D.~Kr{{\'a}}l', O.~Serra, and L.~Vena, \emph{A combinatorial proof of the
  removal lemma for groups}, J. Combin. Theory Ser. A \textbf{116} (2009),
  971--978.

\bibitem{KSV12}
\bysame, \emph{A removal lemma for systems of linear equations over finite
  fields}, Israel J. Math. \textbf{187} (2012), 193--207.

\bibitem{Le11}
T.~H. L{\^e}, \emph{Green-{T}ao theorem in function fields}, Acta Arith.
  \textbf{147} (2011), 129--152.

\bibitem{LS06}
L.~Lov{{\'a}}sz and B.~Szegedy, \emph{Limits of dense graph sequences}, J.
  Combin. Theory Ser. B \textbf{96} (2006), 933--957.

\bibitem{LS07}
\bysame, \emph{Szemer{\'e}di's lemma for the analyst}, Geom. Funct. Anal.
  \textbf{17} (2007), 252--270.

\bibitem{M12}
L.~Matthiesen, \emph{Correlations of the divisor function}, Proc. Lond. Math.
  Soc. \textbf{104} (2012), 827--858.

\bibitem{M122}
\bysame, \emph{Linear correlations amongst numbers represented by positive
  definite binary quadratic forms}, Acta Arith. \textbf{154} (2012), 235--306.

\bibitem{NRS06}
B.~Nagle, V.~R{{\"o}}dl, and M.~Schacht, \emph{The counting lemma for regular
  {$k$}-uniform hypergraphs}, Random Structures Algorithms \textbf{28} (2006),
  113--179.

\bibitem{RTTV08}
O.~Reingold, L.~Trevisan, M.~Tulsiani, and S.~Vadhan, \emph{Dense subsets of
  pseudorandom sets}, 49th Annual IEEE Symposium on Foundations of Computer
  Science, IEEE Computer Society, 2008, pp.~76--85.

\bibitem{RS04}
V.~R{{\"o}}dl and J.~Skokan, \emph{Regularity lemma for {$k$}-uniform
  hypergraphs}, Random Structures Algorithms \textbf{25} (2004), 1--42.

\bibitem{RS06}
\bysame, \emph{Applications of the regularity lemma for uniform hypergraphs},
  Random Structures Algorithms \textbf{28} (2006), 180--194.

\bibitem{RS78}
I.~Z. Ruzsa and E.~Szemer{{\'e}}di, \emph{Triple systems with no six points
  carrying three triangles}, Combinatorics ({P}roc. {F}ifth {H}ungarian
  {C}olloq., {K}eszthely, 1976), {V}ol. {II}, Colloq. Math. Soc. J{\'a}nos
  Bolyai, vol.~18, North-Holland, Amsterdam, 1978, pp.~939--945.

\bibitem{SaxT}
D.~Saxton and A.~Thomason, \emph{Hypergraph containers}, arXiv:1204.6595.

\bibitem{Sch}
M.~Schacht, \emph{Extremal results for random discrete structures}, submitted.

\bibitem{Sco11}
A.~Scott, \emph{Szemer{\'e}di's regularity lemma for matrices and sparse
  graphs}, Combin. Probab. Comput. \textbf{20} (2011), 455--466.

\bibitem{Sha10}
A.~Shapira, \emph{A proof of {G}reen's conjecture regarding the removal
  properties of sets of linear equations}, J. Lond. Math. Soc. \textbf{81}
  (2010), 355--373.

\bibitem{Sol03}
J.~Solymosi, \emph{Note on a generalization of {R}oth's theorem}, Discrete and
  computational geometry, Algorithms Combin., vol.~25, Springer, Berlin, 2003,
  pp.~825--827.

\bibitem{Sol04}
\bysame, \emph{A note on a question of {E}rd{\H o}s and {G}raham}, Combin.
  Probab. Comput. \textbf{13} (2004), 263--267.

\bibitem{Sze75}
E.~Szemer{{\'e}}di, \emph{On sets of integers containing no {$k$} elements in
  arithmetic progression}, Acta Arith. \textbf{27} (1975), 199--245.

\bibitem{Taonote}
T.~Tao, \emph{A remark on {G}oldston-{Y}{\i}ld{\i}r{\i}m correlation
  estimates}, unpublished.

\bibitem{Tao06jam}
\bysame, \emph{The {G}aussian primes contain arbitrarily shaped
  constellations}, J. Anal. Math. \textbf{99} (2006), 109--176.

\bibitem{Tao06jcta}
\bysame, \emph{A variant of the hypergraph removal lemma}, J. Combin. Theory
  Ser. A \textbf{113} (2006), 1257--1280.

\bibitem{TZ08}
T.~Tao and T.~Ziegler, \emph{The primes contain arbitrarily long polynomial
  progressions}, Acta Math. \textbf{201} (2008), 213--305.

\bibitem{TZ13ar}
\bysame, \emph{A multi-dimensional {S}zemer\'edi theorem for the
  primes via a correspondence principle}, Israel J. Math., to appear.

\bibitem{Tow}
H.~Towsner, \emph{An analytic approach to sparse hypergraphs: {h}ypergraph
  removal}, arXiv:1204.1884.

\bibitem{TTV09}
L.~Trevisan, M.~Tulsiani, and S.~Vadhan, \emph{Regularity, boosting, and
  efficiently simulating every high-entropy distribution}, 24th {A}nnual {IEEE}
  {C}onference on {C}omputational {C}omplexity, IEEE Computer Society, 2009,
  pp.~126--136.

\bibitem{Zhao14}
Y.~Zhao, \emph{An arithmetic transference proof of a relative {S}zemer\'edi
  theorem}, Math. Proc. Cambridge Philos. Soc. \textbf{156} (2014), 255--261.

\end{thebibliography}

\providecommand{\bysame}{\leavevmode\hbox to3em{\hrulefill}\thinspace}
\providecommand{\MR}{\relax\ifhmode\unskip\space\fi MR }
\providecommand{\MRhref}[2]{%
  \href{http://www.ams.org/mathscinet-getitem?mr=#1}{#2}
}
\providecommand{\href}[2]{#2}

\end{document}